\definecolor{oneblue}{rgb}{0,0.0,0.75}
\newcommand{\sech}{\mathop{\operator@font sech}}
\newcommand{\sign}{\mathop{\operator@font sign}}
\newtheorem{lemma}{Lemma}[section]
\newtheorem{theorem}{Theorem}[section]
\newtheorem{proposition}{Proposition}[section]
\newtheorem{remark}{Remark}[section]
\numberwithin{equation}{section}
\begin{document}

\title[]{Notes on the Boussinesq-Full dispersion systems for internal waves: Numerical solution and solitary waves}

\author[V. A. Dougalis]{Vassilios A. Dougalis}
\address{Mathematics Department, University of Athens, 15784
Zographou, Greece \and Institute of Applied \& Computational
Mathematics, FO.R.T.H., 71110 Heraklion, Greece}
\email{doug@math.uoa.gr}

\author[A. Duran]{Angel Duran}
\address{ Applied Mathematics Department,  University of
Valladolid, 47011 Valladolid, Spain}
\email{angel@mac.uva.es}

\author[L. Saridaki]{Leetha Saridaki}
\address{Mathematics Department, University of Athens, 15784
Zographou, Greece \and Institute of Applied \& Computational
Mathematics, FO.R.T.H., 71110 Heraklion, Greece}
\email{leetha.saridaki@gmail.com}

\subjclass[2010]{65M70 (primary), 76B15, 76B25 (secondary)}
\keywords{Internal waves, Boussinesq Full Dispersion systems, solitary waves, spectral methods, error estimates}

\dedicatory{\large \it To the memory of Vassili}

\begin{abstract}
In this paper we study some theoretical and numerical issues of the Boussinesq/Full dispersion system. This is a a three-parameter system of pde's that models the propagation of internal waves along the interface of two-fluid layers with rigid lid condition for the upper layer, and under a Boussinesq regime for the upper layer and a full dispersion regime for the lower layer. We first discretize in space the periodic initial-value problem with a Fourier-Galerkin spectral method and prove error estimates for several ranges of values of the parameters. Solitary waves of the model systems are then studied numerically in several ways. The numerical generation is analyzed by approximating the ode system with periodic boundary conditions for the solitary-wave profiles with a Fourier spectral scheme, implemented in a collocation form, and solving iteratively the corresponding algebraic system in Fourier space with the Petviashvili method accelerated with the minimal polynomial extrapolation technique. Motivated by the numerical results, a new result of existence of solitary waves is proved. 
In the last part of the paper, the dynamics of these solitary waves is studied computationally, To this end, the semidiscrete systems obtained from the Fourier-Galerkin discretization in space are integrated numerically in time by a Runge-Kutta Composition method of order four. The fully discrete scheme is used to explore numerically the stability of solitary waves, their collisions, and the resolution of other initial conditions into solitary waves.

\end{abstract}

\maketitle

\section{Introduction}
This paper is concerned with the numerical approximation of the one-dimensional, three-parameter family of Boussinesq/Full dispersion (B/FD) systems
\begin{equation}\label{BFD}
\begin{array}{l}
\left(1-b\partial_{x}^{2} \right)\zeta_t+\frac{1}{\gamma}\left((1- \zeta)u \right)_x-\frac{1}{\gamma^2}(|D|{\rm coth}|D|)u_x+\frac{1}{\gamma}(a-\frac{1}{\gamma^{2}}{\rm coth}^{2}|D|)u_{xxx}=0\ ,\\
\left(1-d\partial_{x}^{2} \right)u_t+(1-\gamma)\zeta_x-\frac{1}{\gamma}uu_x+c(1-\gamma)\zeta_{xxx}=0\ ,
\end{array}
\end{equation}
where $\zeta=\zeta(x,t)$, $u=u(x,t)$, and the nonlocal operator $|D|$ has the Fourier symbol 
\begin{equation}\label{BFD1A}
\widehat{|D|f}(k)=|k|\widehat{f}(k),\; k\in\mathbb{R},
\end{equation} 
with $\widehat{f}(k)$ standing for the Fourier transfom of $f$ at $k$. The constants $a, b, c, d$ are given by
\begin{eqnarray}
a=\frac{1-\alpha_{1}-3\beta}{3},\quad b=\frac{\alpha_{1}}{3},\quad
c=\beta\alpha_{2},\quad d=\beta(1-\alpha_{2}).\label{BB1b}
\end{eqnarray}
where $\alpha_{1}\geq 0, \alpha_{2}\leq 1, \beta\geq 0$ are modelling parameters. The B/FD systems (\ref{BFD}) are nonlocal pde systems derived in \cite{BLS2008} as a model for the propagation of internal waves along the interface of a two-layers system with rigid-lid condition on the upper layer and the lower layer bounded below by a rigid, horizontal bottom. In (\ref{BFD}), $\zeta(x,t)$ denotes the interfacial deviation at position $x$ along the fluid channel and at time $t$, while $u=(1-\beta\partial_{x}^{2})^{-1}{v}$, being $v$ a velocity variable. If $\rho_{1}$ and $\rho_{2}$ denote, respectively, the density of the upper and lower layer, with $\rho_{2}>\rho_{1}$, then $\gamma=\frac{\rho_{1}}{\rho_{2}}<1$. The asymptotic regime of validation of (\ref{BFD}) is described in \cite{BLS2008}, sec. 3.1.2, in terms of the parameters
\begin{equation}\label{param}
\epsilon=\frac{a}{d_{1}},\; \mu=\frac{d_{1}}{\lambda^{2}},\; \epsilon_{2}=\frac{a}{d_{2}},\; \mu_{2}=\frac{d_{2}}{\lambda^{2}},
\end{equation}
where $d_{1}$ (resp. $d_{2}$) is the depth of the upper (resp. lower) layer, and $a$ and $\lambda$ stand, respectively, for a typical amplitude and wavelength of the interfacial wave. The Boussinesq/Full dispersion regime assumes that, cf. \cite{BLS2008,S}
\begin{itemize}
\item[(i)] The deformations are of small amplitude for both layers 
($\epsilon<<1, \epsilon_{2}<<1$).
\item[(ii)] A Boussinesq regime for the upper layer (dispersive and nonlinear effects of the same order) and the lower layer is not shallow; that means
$$\mu\sim\epsilon<<1, \mu_{2}\sim 1,$$ for which it holds that $\delta^{2}\sim\epsilon$ and $\epsilon_{2}\sim\epsilon^{3/2}<<1$.
\end{itemize}
The system (\ref{BFD}) is in unscaled, nondimensional form. (There is no $\delta$ in the unscaled formulation as here $\delta\sim\epsilon^{1/2}$ and $\epsilon$ was set equal to $1$.) 

By the usual Fourier analysis of the linearized system, it is noted in \cite{BLS2008} that the systems (\ref{BFD}) are linearly well posed when $b, d\geq 0, a, c\leq 0$. Many of the systems have been shown to be nonlinearly well posed, locally in time, in \cite{A}, but not all of them are relevant for the internal-wave problem, in view of the restrictions on the parameters $\alpha_{1}, \alpha_{2}, \beta$. In Table \ref{BFD_t1} we tabulate the possible cases of the linearly well-posed systems, along with their indication of their relevance for internal waves, and the reference to a theorem in \cite{A} for local nonlinear well-posedness of their initial-value problem (ivp) in the Sobolev space $(\zeta,u)\in H^{s_{1}}\times H^{s_{2}}$.
\begin{table}[ht]
\begin{center}
\begin{tabular}{|c|c|c|c|c|c|c|}
    \hline
No.&$b$&$d$&$a$&$c$&Relevant&Nonlinear well-posedness\\\hline\hline
1&$+$&$+$&$-$&$-$&Yes&Theorem 2.1(i) $H^{s}\times H^{s}, s\geq 0$ (\lq generic\rq\ B/FD)\\\hline
2&$+$&$+$&$-$&$0$&Yes&Theorem 2.1(ii) $H^{s}\times H^{s-1}, s\geq 0$\\\hline
3&$+$&$+$&$0$&$-$&Yes&Theorem 2.1(i) $H^{s}\times H^{s}, s\geq 0$ \\\hline
4&$+$&$+$&$0$&$0$&Yes&Theorem 2.1(ii) $H^{s-1}\times H^{s}, s\geq 0$ (\lq BBM-BBM\rq\  B/FD)\\\hline
5&$+$&$0$&$-$&$-$&No&Theorem 2.5 $H^{s+1}\times H^{s}, s> 3/2$ \\\hline
6&$+$&$0$&$-$&$0$&Yes&Theorem 2.4 $H^{s}\times H^{s}, s> 3/2$ \\\hline
7&$+$&$0$&$0$&$-$&No&Theorem 2.5 $H^{s+1}\times H^{s}, s> 3/2$\\\hline
8&$+$&$0$&$0$&$0$&Yes&Theorem 2.4 $H^{s}\times H^{s}, s> 3/2$\\\hline
9&$0$&$+$&$-$&$-$&Yes&Theorem 2.3 $H^{s}\times H^{s+1}, s> 1/2$ \\\hline
10&$0$&$+$&$-$&$0$&Yes&Theorem 2.2 $H^{s}\times H^{s+2}, s>1/2$ \\\hline
11&$0$&$+$&$0$&$-$&Yes&Theorem 2.3 $H^{s}\times H^{s+1}, s> 1/2$\\\hline
12&$0$&$+$&$0$&$0$&Yes&Theorem 2.2 $H^{s}\times H^{s+2}, s>1/2$\\\hline
13&$0$&$0$&$-$&$-$&No&\\\hline
14&$0$&$0$&$-$&$0$&No&\\\hline
15&$0$&$0$&$0$&$-$&No&\\\hline
16&$0$&$0$&$0$&$0$&No&\\\hline
\end{tabular}
\end{center}
\caption{B/FD systems: Relevance for internal waves, existing nonlinear well-posedness theory, \cite{A}.}
\label{BFD_t1}
\end{table}
It is to be noted that the systems that are indicated to be relevant to internal waves are not so for all admissible values of $\alpha_{1}, \alpha_{2}$, and $\beta$, but only for subsets thereof, in general.
\begin{remark}
We recall that when $b=d$, the system (\ref{BFD}) admits a Hamiltonian structure
\begin{eqnarray*}
\partial_{t}\begin{pmatrix}
\zeta\\ u
\end{pmatrix}={\mathcal J}\nabla E(\zeta,u),
\end{eqnarray*}
in appropriate spaces for $(\zeta,u)$, where
\begin{eqnarray*}
\mathcal{J}:=-\begin{pmatrix}
0&\left(1-b\partial_{x}^{2} \right)^{-1}\partial_{x}\\\left(1-b\partial_{x}^{2} \right)^{-1}\partial_{x}&0
\end{pmatrix},
\end{eqnarray*}
\begin{eqnarray}
E(\zeta,u):=\frac{1}{2}\int_{\mathbb{R}}\left((1-\gamma)\zeta (1+c\partial_{x}^{2})\zeta+u\mathcal{L}_{\mu_{2}}u-\frac{1}{2\gamma}\zeta u^{2}\right)dx,\label{BFDEnergy}
\end{eqnarray}
where
\begin{eqnarray*}
\mathcal{L}_{\mu_{2}}=\frac{1}{\gamma}-\frac{1}{\gamma^{2}}|D|{\rm coth}(|D|)+\frac{1}{\gamma}\left(a-\frac{1}{\gamma^{2}}{\rm coth}^{2}(|D|)\right)\partial_{x}^{2},
\end{eqnarray*}
and $\nabla$ denotes variational derivative.
In particular, (\ref{BFDEnergy}) is a conserved quantity. On the other hand, since the group of spatial translations
\begin{eqnarray*}
g_{\epsilon}: (\zeta(x),v(x))\mapsto (\zeta(x-\epsilon),v(x-\epsilon)),\; \epsilon\in\mathbb{R},
\end{eqnarray*}
is a symmetry group of (\ref{BFD}) an its infinitesimal generator is given by
\begin{eqnarray*}
\frac{d}{d\epsilon}g_{\epsilon} (\zeta,v)\Big|_{\epsilon=0}=-\begin{pmatrix}
\partial_{x}\zeta\\\partial_{x}v
\end{pmatrix}=\mathcal{J}\nabla I(\zeta,v_{\beta}),
\end{eqnarray*}
where
\begin{eqnarray}
I(\zeta,u):=\int_{\mathbb{R}}\left(\zeta (1-b\partial_{x}^{2})u\right)dx,\label{BFDMom}
\end{eqnarray}
then, (\ref{BFDMom}) is also a conserved quantity, \cite{Olver1993}.
\end{remark}

Long time existence for the corresponding two-dimensional version of (\ref{BFD}) (existence on time scales of order $1/\epsilon$) for all the linearly well-posed systems (for which $a,c\leq 0, b,d\geq 0$) is proved in \cite{SautX2020}. In the same paper, a result of global existence of small solutions in the Hamiltonian case $b=d>0, a\leq 0, c<0$ is established. On the other hand, the existence of solitary wave solutions of (\ref{BFD}) in the Hamiltonian case $b=d>0, a,c\leq 0$, and for a certain range of speeds (depending on the parameters $a,b,c,d$ and $\gamma$) is proved in \cite{AnguloS2019}, as well as smoothness and exponential decay of the solitary waves.

Concerning the numerical approximation of systems of Boussinesq type for surface and internal waves, error estimates of Galerkin-Finite Element semidiscrete schemes and for associated fully discrete schemes with high-order, explicit Runge-Kutta (RK) methods for some initial-boundary-value problems for several surface-wave Boussinesq systems can be found in \cite{AD1,AD2,DougalisMS2007}. Spectral methods of collocation type with the classical explicit $4$th-order RK time integrator to discretize some Boussinesq systems in the surface wave case are analyzed in \cite{XRAA}, while the $L^{2}$ convergence of semidiscrete approximations of the Boussinesq/Boussinesq systems modelling internal wave propagation with the spectral Fourier-Galerkin method is established in \cite{DDS1}. A full discretization of the resulting semidiscrete systems with a fourth-order RK Composition method is studied computationally in the previous paper and its extended version \cite{DDS_arxiv}.

The present paper analyzes several aspects of the B/FD systems (\ref{BFD}). The first question, developed in section \ref{sec2}, concerns the derivation of error estimates for the spectral Fourier-Galerkin semidiscretization of the periodic ivp of some of the B/FD systems. Explicitly, we prove $L^{2}$ convergence of the spectral approximations in the following cases of Table \ref{BFD_t1}:
\begin{itemize}
\item The \lq BBM-BBM\rq\  B/FD case ($b,d>=, a=c=0$),
\item The \lq generic\rq\ B/FD case ($b,d>0, a,c<0$),
\item The B/FD cases with $b,d>0$ and either $a<0,c=0$ or $a=0, c<0$.
\end{itemize}
The error estimates depend indeed on the regularity of the solution of the periodic initial-value problem for (\ref{BFD}), in the sense that if $\zeta, u$ are in the Sobolev space of order $\mu\geq 1$, then the $L^{2}$ error behaves like $O(N^{-\mu})$, where $N\geq 1$ is the degree of the Fourier-Galerkin approximation as trigonometric polynomial. In particular, this proves spectral $L^{2}$ convergence for smooth solutions.

The rest of the paper is devoted to the study of solitary-wave solutions of (\ref{BFD}). In section \ref{sec3} we study numerically the existence of solitary waves by approximating the ode system for the wave profiles with periodic boundary conditions with a Fourier collocation scheme and solving iteratively the resulting algebraic system in the Fourier space with the Petviashvili method, \cite{Petv1976,pelinovskys}, accelerated with a vector extrapolation algorithm, \cite{sidifs,smithfs,sidi}, a technique which turned out to be successful in other models, cf. \cite{AlvarezD2015}. We first generate approximations to solitary wave solutions in the Hamiltonian case whose existence is proved in \cite{AnguloS2019}. Within this Hamiltonian case but beyond the assumptions made in \cite{AnguloS2019}, the method can still generate approximate solitary wave profiles. Motivated by these numerical results, in Appendix \ref{appendixA} we determine a limit value $c_{\gamma}$ such that a solitary wave of speed $c_{s}$ with $|c_{s}|<c_{\gamma}$ exists. The proof is a modification of that in \cite{AnguloS2019} when applying the Concentration-Compactness theory, \cite{Lions}, and fits the numerical results. In Section \ref{sec3} we also explore the system of an analogous speed limit for the nonhamiltonian case.

The second point on solitary waves considered in the present paper is developed in section \ref{sec4} and concerns their dynamics. We integrate numerically the ode semidiscrete systems from the Fourier-Galerkin approximation of the periodic ivp of (\ref{BFD}) with a fourth-order RK Composition-type method based on the implicit midpoint rule, mentioned above. The high accuracy of the resulting fully discrete scheme and its performance when approximating other nonlinear dispersive wave equations, cf. \cite{DD} and references therein, enable us to study computationally the stability of the solitary waves from numerical experiments of different type: monitoring the evolution of the numerical approximation from small and large perturbations of the computed solitary waves as initial condition, studying the behaviour of overtaking and head-on collisions of solitary waves, and analyzing the resolution of smooth initial conditions into a train of solitary waves. Some concluding remarks are outlined in section \ref{sec5}.

Throughout the paper, the following notation will be used. We denote by $(\cdot,\cdot)$ (resp. $||\cdot ||$) the inner product (resp. norm) on $L^{2}=L^{2}(0,1)$. For real $s$, $H^{s}$ will denote the $L^{2}$-based Sobolev spaces of periodic functions on $[0,1]$. The norm of  $g\in H^{s}$ is given by
$$||g||_{s}=\left(\sum_{k\in\mathbb{Z}}(1+k^{2})^{s}|\widehat{g}(k)|^{2}\right)^{1/2},
$$ where $\widehat{g}(k)$ stands for the $k$th Fourier coefficient of $g$. The norm on $L^{\infty}$
will be denoted by $|\cdot |_{\infty}$.
If $N\geq 1$, is an integer, the Euclidean inner product (resp. norm) in $\mathbb{C}^{2N}$ is denoted by $(\cdot,\cdot)_{N}$ (resp. $||\cdot||_{N}$).
\section{Error estimates of the spectral semidiscretizations}
\label{sec2}
\subsection{The \lq BBM-BBM\rq\  B/FD case}
\label{sec21}
We first consider the systems of the case (4) of Table \ref{BFD_t1}, namely the \lq BBM-BBM\rq\ B/FD systems, for which $b, d>0, a=c=0$.
The initial-periodic boundary-value problem for these systems is: For $0\leq x\leq 1, 0\leq t\leq T$ we seek $u=u(x,t), \zeta=\zeta(x,t)$, $1$-periodic with respect to $x$, such that
\begin{equation}\label{dds21}
\begin{array}{l}
\left[1-b\partial_{x}^{2} \right]\zeta_t+\frac{1}{\gamma}\left((1- \zeta)u \right)_x-\frac{1}{\gamma^2}(|D|{\rm coth}|D|)u_x-\frac{1}{\gamma^{3}}({\rm coth}^{2}|D|)u_{xxx}=0\ ,\\
\left[1-d\partial_{x}^{2} \right]u_t+(1-\gamma)\zeta_x-\frac{1}{\gamma}uu_x=0\ ,
\end{array}
\end{equation}
where $u(x,0)=u_{0}(x), \zeta(x,0)=\zeta_{0}(x), 0\leq x\leq 1$. Here $u_{0},\zeta_{0}$ are $1$-periodic given functions and $D$ is the operator $\frac{1}{i}\partial{x}$. We assume that (\ref{dds21}) has a unique solution which is smooth enough for the purposes of the error estimation.

Fourier analysis yields the following representation of (\ref{dds21}) for $k\in\mathbb{Z}, t\in [0,T]$. (Note that the symbol of the operator $|D|$ is $|k|$, cf. (\ref{BFD1A}).)
\begin{equation*}
\begin{array}{l}
(1+bk^{2})\widehat{\zeta}_t+\frac{ik}{\gamma}\widehat{u}-\frac{ik}{\gamma}\widehat{\zeta u}-\frac{ik}{\gamma^2}(|k|{\rm coth}|k|)\widehat{u}+\frac{ik^{3}}{\gamma^{3}}({\rm coth}^{2}|k|)\widehat{u}=0\ ,\\
(1+dk^{2})\widehat{u}_t+(ik)(1-\gamma)\widehat{\zeta}-\frac{ik}{2\gamma}\widehat{u^{2}}=0\ ,\\
\widehat{\zeta}(k,0)=\widehat{\zeta_{0}}(k), \widehat{u}(k,0)=\widehat{u_{0}}(k)\ .
\end{array}
\end{equation*}
(Here, for any function $f(x,t)$, $1$-periodic in $x$, we let $\widehat{f}=\widehat{f}(k,t)$ denote its $k$th Fourier component at $t$.) Since $b,d>0$, we will write the above for $k\in\mathbb{Z}, 0\leq t\leq T$, as
\begin{equation}\label{dds22}
\begin{array}{l}
\widehat{\zeta}_t+\frac{1}{\gamma}\frac{ik}{1+bk^{2}}\widehat{u}-\frac{1}{\gamma}\frac{ik}{1+bk^{2}}\widehat{\zeta u}-\frac{ik}{\gamma^2}\frac{|k|{\rm coth}|k|}{1+bk^{2}}\widehat{u}+\frac{ik^{3}}{\gamma^{3}}\frac{{\rm coth}^{2}|k|}{1+bk^{2}}\widehat{u}=0\ ,\\
\widehat{u}_t+(1-\gamma)\frac{ik}{1+dk^{2}}\widehat{\zeta}-\frac{i}{2\gamma}\frac{k}{1+dk^{2}}\widehat{u^{2}}=0\ ,\\
\widehat{\zeta}(k,0)=\widehat{\zeta_{0}}(k), \widehat{u}(k,0)=\widehat{u_{0}}(k)\ ,
\end{array}
\end{equation}
and view it, for each $k\in\mathbb{Z}$, as an ode ivp on $[0,T]$. The spectral implementation of the schemes will be based on this formulation.

For the error estimation it is convenient to work in physical space. For this purpose we introduce some nonlocal operators acting on the $L^{2}$-based Sobolev spaces of periodic functions $H^{s}, s\in\mathbb{R}$. In what follows we formally define
\begin{eqnarray}
T_{b}&=&(1- b\partial_{x}^{2})^{-1},\;{\rm with}\;{\rm symbol}\; \frac{1}{1+bk^{2}},\label{dds23i}\\
T_{d}&=&(1- d\partial_{x}^{2})^{-1},\;{\rm with}\;{\rm symbol}\; \frac{1}{1+dk^{2}},\label{dds23ii}\\
S_{1}&=&|D|{\rm coth}|D|,\;{\rm with}\;{\rm symbol}\;|k|{\rm coth}|k|,\label{dds23iii}\\
S_{2}&=&{\rm coth}^{2}|D|,\;{\rm with}\;{\rm symbol}\;{\rm coth}^{2}|k|\label{dds23iv}.
\end{eqnarray}
Note that all the operators commute on appropriate domains.

For the operators (\ref{dds23i}), (\ref{dds23ii}), of order $-2$, we easily obtain that $T_{\kappa}:H^{s-2}\rightarrow H^{s}, \forall s\in\mathbb{R}, \kappa=b\;{\rm or}\; d$, and that
\begin{eqnarray}
||T_{\kappa}f||_{s}\leq C_{\kappa}||f||_{s-2},\; s\in\mathbb{R}, f\in H^{s-2}.\label{dds24}
\end{eqnarray}
To study the other nonlocal operators (\ref{dds23iii}), (\ref{dds23iv}), consider the function $\phi(x)=x{\rm coth}{x}$, which is even and continuous on $\mathbb{R}$. Since $\phi(x)=x\frac{e^{2x}+1}{e^{2x}-1}$, we have that $\phi(x)\sim x$ for $x>>1$, and by Taylor's theorem $\phi(x)=1+\frac{x^{2}}{3}-\frac{x^{4}}{45}+O(x^{6})$ for small $|x|$. We conclude that the symbol $\tau_{1}(k)=|k|{\rm coth}|k|$ of the operator $S_{1}$ behaves like $|k|$ for $|k|>>1$ and like $1+\frac{k^{2}}{3}+O(k^{4})$ for small $|k|$. We also note that the symbol $\tau_{2}(k)={\rm coth}^{2}|k|$ of the operator $S_{2}$ behaves like $1/k^{2}$ as $|k|\rightarrow 0$ and tends to $1$ as $|k|\rightarrow\infty$.

Getting back to (\ref{dds22}), we rewrite it in physical space as
\begin{equation}\label{dds24b}
\begin{array}{l}
{\zeta}_t+\frac{1}{\gamma}\mathcal{R}_{b}{u}-\frac{1}{\gamma}\mathcal{R}_{b}{\zeta u}-\frac{1}{\gamma^2}\mathcal{S}_{1}{u}-\frac{1}{\gamma^{3}}\mathcal{S}_{2}{u}=0\ ,\\
{u}_t+(1-\gamma)\mathcal{R}_{d}{\zeta}-\frac{1}{2\gamma}\mathcal{R}_{d}{u^{2}}=0\ ,\\
{\zeta}(x,0)={\zeta_{0}}(x), {u}(x,0)={u_{0}}(x)\ ,
\end{array}
\end{equation}
for $x\in [0,1], t\in [0,T]$, where
\begin{equation}\label{dds25}
\begin{array}{l}
\mathcal{R}_{b}=T_{b}\partial_{x},\quad \mathcal{R}_{d}=T_{d}\partial_{x}\ ,\\
\mathcal{S}_{1}=T_{b}S_{1}\partial_{x},\quad \mathcal{S}_{2}=T_{b}S_{2}\partial_{x}^{3}\ .
\end{array}
\end{equation}
The symbol of $\mathcal{R}_{\kappa}, \kappa=b\;{\rm or}\; d$, is
$$\sigma(\mathcal{R}_{\kappa})=\frac{ik}{1+\kappa k^{2}}.$$ Therefore $\mathcal{R}_{\kappa}$ is of order $-1$ and by (\ref{dds24}) we have for $\kappa=b\;{\rm or}\; d$
\begin{equation}\label{dds26}
||\mathcal{R}_{\kappa}f||_{j}\leq C_{\kappa}||f||_{j-1},\; f\in H^{j-1},\; j\in\mathbb{R}.
\end{equation}
The symbol of $\mathcal{S}_{1}$ is $\frac{ik|k|{\rm coth}|k|}{1+bk^{2}}$. Hence in view of the behaviour of the function $|x|{\rm coth}{|x|}$, we see that this symbol is bounded for $k\in\mathbb{R}$, i.~e. that $\mathcal{S}_{1}$ is of order $0$ and it holds that
\begin{equation}\label{dds27}
||\mathcal{S}_{1}f||_{j}\leq C||f||_{j},\; f\in H^{j},\; j\in\mathbb{R}\ .
\end{equation}
Finally, since the symbol of $S_{2}$ is $\frac{-ik^{3}{\rm coth}^{2}|k|}{1+bk^{2}}$, which is bounded for bounded intervals of $k$ and is of $O(|k|)$, as $|k|\rightarrow\infty$, in view of the behaviour of the function ${\rm coth}^{2}{|x|}$, hence $\mathcal{S}_{2}$ is of order $1$ and we have
\begin{equation}\label{dds28}
||\mathcal{S}_{2}f||_{j}\leq C||f||_{j+1},\; f\in H^{j+1},\; j\in\mathbb{R}\ .
\end{equation}

Let $N\geq 1$ be an integer and consider the finite dimensional space $S_{N}$ given by
$$S_{N}:={\rm span}\{e^{ikx},\; k\in\mathbb{Z},\; -N\leq k\leq N\}.$$

The Fourier-Galerkin semidiscrete approximation of (\ref{dds24b}) in $S_{N}$  is denoted by $(\zeta_{N},u_{N})$ and satisfies, for $0\leq t\leq T$
\begin{equation}\label{dds29}
\begin{array}{l}
{\zeta}_{N,t}+\frac{1}{\gamma}\mathcal{R}_{b}{u_{N}}-\frac{1}{\gamma}\mathcal{R}_{b}P_{N}({\zeta_{N} u_{N}})-\frac{1}{\gamma^2}\mathcal{S}_{1}{u_{N}}-\frac{1}{\gamma^{3}}\mathcal{S}_{2}{u_{N}}=0\ ,\\
{u}_{N,t}+(1-\gamma)\mathcal{R}_{d}{\zeta_{N}}-\frac{1}{2\gamma}\mathcal{R}_{d}P_{N}({u_{N}^{2}})=0\ ,\\
{\zeta}_{N}(0)=P_{N}{\zeta_{0}}, {u}_{N}(0)=P_{N}{u_{0}}\ ,
\end{array}
\end{equation}
where $P_{N}$ is the $L^{2}$-projection onto $S_{N}$ (see \cite{DDS1}). Note that
 for $0\leq j\leq \mu$, and for any $v\in H^{r}, r\geq 1$,
\begin{eqnarray*}
||v-P_{N}v||_{j}&\leq &C_{j,r}N^{j-r}||v||_{\mu},\label{epn1}\\
||v-P_{N}v||_{j,\infty}&\leq &C_{j,r}N^{1/2+j-r}||v||_{r},\label{epn2}
\end{eqnarray*}
Note also that $P_{N}$ commutes with $\mathcal{R}_{\kappa}, \kappa=b,d, \mathcal{S}_{i}, i=1,2$, and $\partial_{x}$. The system (\ref{dds29}) has certainly a local in time solution.

We let now $\theta=\zeta_{N}-P_{N}\zeta, \rho=P_{N}\zeta-\zeta$, so that $\zeta_{N}-\zeta=\theta+\rho$, $\xi=u_{N}-P_{N}u, \sigma=P_{N}u-u$, so that $u_{N}-u=\xi+\sigma$. Then, applying to the first pde in (\ref{dds24b}) the operator $P_{N}$, substracting the resulting equation from the first equation in (\ref{dds29}), and performing the same operations between the second pde of (\ref{dds24b}) and the second equation of (\ref{dds29}), we have
\begin{equation}\label{dds210}
\begin{array}{l}
{\theta}_{t}+\frac{1}{\gamma}\mathcal{R}_{b}{\xi}-\frac{1}{\gamma}\mathcal{R}_{b}P_{N}A-\frac{1}{\gamma^2}\mathcal{S}_{1}{\xi}-\frac{1}{\gamma^{3}}\mathcal{S}_{2}{\xi}=0\ ,\\
{\xi}_{t}+(1-\gamma)\mathcal{R}_{d}{\theta}-\frac{1}{2\gamma}\mathcal{R}_{d}P_{N}B=0\ ,\\
{\theta}(0)=0, {\xi}(0)=0\ ,
\end{array}
\end{equation}
where, as in \cite{DDS1},
\begin{eqnarray}
A&:=&\zeta_{N}u_{N}-\zeta u=u\rho+\zeta\sigma+u\theta+\zeta\xi+\sigma\theta+\rho\xi+\rho\sigma+\theta\xi,\label{dds211}\\
B&:=&u_{N}^{2}-u^{2}=u\sigma+u\xi+\sigma\xi+\frac{1}{2}\sigma^{2}+\frac{1}{2}\xi^{2}.\label{dds212}
\end{eqnarray}
The solution of (\ref{dds210}) exists for as long as we have existence of $\zeta_{N}, u_{N}$ in (\ref{dds29}). Part of the proof of the proposition below will be to show that $\theta, \xi$, and therefore $\zeta_{N}, u_{N}$, exist up to $t=T$.
\begin{proposition}
\label{pro21}
Suppose that $\zeta, u\in H^{\mu},\mu\geq 1$ for $0\leq t\leq T$. If $N$ is sufficiently large, 
\begin{equation}\label{dds213}
\max_{0\leq t\leq T}\left(||\zeta_{N}-\zeta||+||u_{N}-u||\right)\leq CN^{-\mu}.
\end{equation}
\end{proposition}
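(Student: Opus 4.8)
The plan is an energy estimate for the error system (\ref{dds210}), the one subtlety being that among the nonlocal operators occurring there only $\mathcal{S}_2$ is of positive order --- namely order $1$ by (\ref{dds28}) --- and it sits in the first equation, acting on $\xi$. I would therefore measure $\theta$ in $L^2$ but $\xi$ in $H^1$: set $\mathcal{E}(t):=\|\theta(t)\|^2+\|\xi(t)\|_1^2$, so that $\mathcal{E}(0)=0$ by (\ref{dds210}); take the $L^2$ inner product of the first equation of (\ref{dds210}) with $\theta$, the $H^1$ inner product of the second equation with $\xi$, and add. This yields an identity for $\frac{1}{2}\frac{d}{dt}\mathcal{E}(t)$ whose right-hand side is the sum of the four linear contributions $-\frac{1}{\gamma}(\mathcal{R}_b\xi,\theta)$, $\frac{1}{\gamma^2}(\mathcal{S}_1\xi,\theta)$, $\frac{1}{\gamma^3}(\mathcal{S}_2\xi,\theta)$, $-(1-\gamma)(\mathcal{R}_d\theta,\xi)_1$ and the two nonlinear ones $\frac{1}{\gamma}(\mathcal{R}_bP_NA,\theta)$, $\frac{1}{2\gamma}(\mathcal{R}_dP_NB,\xi)_1$, where $(\cdot,\cdot)_1$ is the $H^1$ inner product and $A$, $B$ are as in (\ref{dds211}), (\ref{dds212}).

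For the linear part I would bound each term by $C\mathcal{E}$ using only the orders of the operators recorded in (\ref{dds26})--(\ref{dds28}), with no cancellation between coupling terms exploited. Indeed $|(\mathcal{R}_b\xi,\theta)|\le C\|\xi\|\,\|\theta\|\le C\mathcal{E}$ and $|(\mathcal{S}_1\xi,\theta)|\le C\|\xi\|\,\|\theta\|\le C\mathcal{E}$ since $\mathcal{R}_b$ has order $-1$ and $\mathcal{S}_1$ order $0$; the critical term obeys $|(\mathcal{S}_2\xi,\theta)|\le\|\mathcal{S}_2\xi\|\,\|\theta\|\le C\|\xi\|_1\|\theta\|\le C\mathcal{E}$ by (\ref{dds28}), which is precisely why $\xi$ must be carried in $H^1$; and $|(\mathcal{R}_d\theta,\xi)_1|\le\|\mathcal{R}_d\theta\|_1\|\xi\|_1\le C\|\theta\|\,\|\xi\|_1\le C\mathcal{E}$ by (\ref{dds26}) with $j=1$, i.e.\ because $\mathcal{R}_d:L^2\to H^1$ boundedly as it is smoothing of order $-1$. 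This last point is the heart of the matter: the $H^1$ estimate of the second equation never requires more than $L^2$ control of $\theta$.

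For the nonlinear terms I would first reduce, by (\ref{dds26}) (with $j=0$ and $j=1$ respectively) and the fact that $P_N$ is a contraction on every $H^s$, to $|(\mathcal{R}_bP_NA,\theta)|\le C\|A\|\,\|\theta\|$ and $|(\mathcal{R}_dP_NB,\xi)_1|\le\|\mathcal{R}_dP_NB\|_1\|\xi\|_1\le C\|B\|\,\|\xi\|_1$. Expanding $A$, $B$ as in (\ref{dds211}), (\ref{dds212}): the purely approximation-theoretic terms ($u\rho$, $\zeta\sigma$, $\rho\sigma$ in $A$; $u\sigma$, $\sigma^2$ in $B$) are $O(N^{-\mu})$ by the approximation estimates for $P_N$ stated above, since $\zeta,u\in H^\mu$ with $\mu\ge1$; the terms linear in $(\theta,\xi)$, each multiplied by a bounded factor $u$, $\zeta$, $\sigma$ or $\rho$, are $\le C\mathcal{E}^{1/2}$, using $|u|_\infty,|\zeta|_\infty$ bounded on $[0,T]$ and $|\rho|_\infty,|\sigma|_\infty$ bounded (here $\mu\ge1>1/2$); and the terms quadratic in $(\theta,\xi)$, namely $\theta\xi$ and $\xi^2$, are $\le C\mathcal{E}$ upon placing the $L^\infty$ norm on $\xi$ and using $|\xi|_\infty\le C\|\xi\|_1\le C\mathcal{E}^{1/2}$ (so that, notably, no inverse inequality for $\theta$ is needed). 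Hence $\|A\|+\|B\|\le C(N^{-\mu}+\mathcal{E}^{1/2})$ while $\mathcal{E}\le1$, and, after Young's inequality, the identity of the first paragraph gives $\frac{d}{dt}\mathcal{E}\le C(\mathcal{E}+N^{-2\mu})$ on the same range.

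Finally I would close the argument by a standard continuation/bootstrap. On the maximal subinterval of $[0,T]$ on which $(\zeta_N,u_N)$ exists (a local solution of (\ref{dds29}) being granted) and $\mathcal{E}\le1$, Gronwall's lemma together with $\mathcal{E}(0)=0$ gives $\mathcal{E}(t)\le(e^{Ct}-1)N^{-2\mu}\le C_TN^{-2\mu}$; taking $N$ so large that $C_TN^{-2\mu}<1$, the threshold is never attained, so $\|\zeta_N\|$ and $\|u_N\|$ stay bounded, the semidiscrete solution extends, and the subinterval must equal $[0,T]$ --- which in particular establishes existence of $\zeta_N,u_N$ up to $t=T$. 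Then $\|\zeta_N-\zeta\|\le\|\theta\|+\|\rho\|$ and $\|u_N-u\|\le\|\xi\|+\|\sigma\|\le\|\xi\|_1+\|\sigma\|$, together with $\mathcal{E}\le C_TN^{-2\mu}$ and the approximation estimate for $P_N$, yield (\ref{dds213}). The main obstacle --- and essentially the only departure from the argument of \cite{DDS1} --- is precisely the order-one operator $\mathcal{S}_2$: one must accept working with $\xi$ in $H^1$, and then verify that the $H^1$ estimate of the second equation still closes with merely $L^2$ control of $\theta$, which it does thanks to the smoothing of $\mathcal{R}_d$; everything else is routine.
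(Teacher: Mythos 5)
Your proof is correct and follows essentially the same route as the paper: the same choice of measuring $\theta$ in $L^2$ and $\xi$ in $H^1$ (forced by the order-one operator $\mathcal{S}_2$), the same operator bounds (\ref{dds26})--(\ref{dds28}), the same splitting of $A$ and $B$, and the same bootstrap-plus-Gronwall continuation; the paper merely bounds $\|\theta_t\|$ and $\|\xi_t\|_1$ directly by the triangle inequality and bootstraps on $|\xi|_\infty\le 1$ rather than forming the quadratic energy $\mathcal{E}$ and bootstrapping on $\mathcal{E}\le 1$, which is a cosmetic difference.
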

\begin{proof}
While the semidiscrete solution exists, taking the $L^{2}$ norms in the first equation of (\ref{dds210}) and $H^{1}$ norms in the second, we get by the triangle inequality
\begin{eqnarray*}
||\theta_{t}||&\leq & C\left(|1\mathcal{R}_{b}\xi||+||\mathcal{S}_{1}\xi||+||\mathcal{S}_{2}\xi||+||\mathcal{R}_{b}P_{N}A||\right),\\
||\xi_{t}||_{1}&\leq & C\left(||\mathcal{R}_{d}\theta||_{1}+||\mathcal{R}_{d}P_{N}B||_{1}\right).
\end{eqnarray*}
Using now the properties (\ref{dds26}), (\ref{dds27}), and (\ref{dds28}) for the nonlocal operators $\mathcal{R}_{b}, \mathcal{R}_{d}, \mathcal{S}_{1}, \mathcal{S}_{2}$ we see that
\begin{eqnarray}
||\theta_{t}||&\leq & C\left(||\xi||+||\xi||+||\xi||_{1}+||A||\right)\leq C\left(||\xi||_{1}+||A||\right),\label{dds214}\\
||\xi_{t}||_{1}&\le &C\left(||\theta||+||B||\right),\label{dds215}
\end{eqnarray}
where, to get (\ref{dds215}) we used the fact that $||\mathcal{R}_{d}P_{N}B||\leq C||P_{N}B||\leq C||B||$, in view of (\ref{dds26}).

Now
\begin{eqnarray}
||A||&\leq &|u|_{\infty}||\rho||+|\zeta|_{\infty}|1\sigma||+|u|_{\infty}||\theta||+|\zeta|_{\infty}||\xi||+|\sigma|_{\infty}||\theta||\nonumber\\
&&+C||\rho||||\xi||_{1}+|\rho|_{\infty}||\sigma||+|\xi|_{\infty}||\theta||.\label{dds216}
\end{eqnarray}
Let now $0<t_{N}\leq T$ be the maximal time instance for which
\begin{equation}\label{dds217}
|\xi|_{\infty}\leq 1,\quad 0\leq t\leq t_{N}.
\end{equation}
(The existence of such a $t_{N}$ follows by continuity and the fact that $\xi(0)=0$.) Therefore by (\ref{dds216}), (\ref{dds217}), and the properties of $S_{N}$ we have
\begin{equation}\label{dds218}
||A||\leq C\left(N^{-\mu}+||\theta||+||\xi||_{1}\right),\quad 0<t\leq t_{N}.
\end{equation}
For $B$ we have
\begin{eqnarray}
||B||\leq |u|_{\infty}||\sigma||+|u|_{\infty}||\xi||+|\sigma|_{\infty}||\xi||+\frac{1}{2}|\sigma|_{\infty}||\sigma||+\frac{1}{2}|\xi|_{\infty}||\xi||.\label{dds219}
\end{eqnarray}
Hence, in view of (\ref{dds217}), (\ref{dds219}) gives
\begin{equation}\label{dds220}
||B||\leq C\left(N^{-\mu}+||\theta||+||\xi||_{1}\right),\quad 0<t\leq t_{N}.
\end{equation}
We conclude from (\ref{dds214}), (\ref{dds215}), (\ref{dds218}), (\ref{dds220}) that
\begin{equation}\label{dds221}
||\theta_{t}||+||\xi_{t}||_{1}\leq C\left(N^{-\mu}+||\theta||+||\xi||_{1}\right),\quad 0<t\leq t_{N},
\end{equation}
where $C$ is independent of $N, t_{N}$. By Gronwall's inequality therefore (since $\theta(0)=\xi(0)=0$)
\begin{equation*}\label{dds222}
||\theta||+||\xi||_{1}\leq CN^{-\mu},\quad 0<t\leq t_{N},
\end{equation*}
Thus we see, since $|\xi|_{\infty}\leq ||\xi||_{1}$, that $t_{N}$ was not maximal in (\ref{dds217}) if $N$ was taken sufficiently large. The argument may continue up to $t_{N}=T$ and it follows that
\begin{equation*}\label{dds223}
||\theta||+||\xi||_{1}\leq CN^{-\mu},\quad 0<t\leq T.
\end{equation*}
Hence (\ref{dds213}) follows.
\end{proof}

\subsection{The \lq generic\rq\  B/FD case}
\label{sec22}
We next consider the systems of the case (1) of Table \ref{BFD_t1}, namely the B/FD systems in the \lq generic\rq\ case $b,d>0, a,c<0$. The initial-periodic bvp for these systems takes the following form: For $0\leq x\leq 1, 0\leq t\leq T$, we seek $u=u(x,t), \zeta=\zeta(x,t)$, $1$-periodic with respect to $x$, such that
\begin{equation}\label{dds31}
\begin{array}{l}
\left[1-b\partial_{x}^{2} \right]\zeta_t+\frac{1}{\gamma}\left((1- \zeta)u \right)_x-\frac{1}{\gamma^2}(|D|{\rm coth}|D|)u_x+\frac{1}{\gamma}(a-\frac{1}{\gamma^{2}}{\rm coth}^{2}|D|)u_{xxx}=0\ ,\\
\left[1-d\partial_{x}^{2} \right]u_t+(1-\gamma)\zeta_x-\frac{1}{\gamma}uu_x+c(1-\gamma)\zeta_{xxx}=0\ ,
\end{array}
\end{equation}
with $u(x,0)=u_{0}(x), \zeta(x,0)=\zeta_{0}(x), 0\leq x\leq 1$, where $u_{0},\zeta_{0}$ are given $1$-periodic functions. We assume that (\ref{dds31}) has a unique solution $(\zeta,u)$, which is smooth enough for the purposes of the error estimation.

Fourier analysis yields, as in section \ref{sec21}, the following equations valid for $k\in\mathbb{Z}, t\in [0,T]$:
\begin{equation}\label{dds32}
\begin{array}{l}
\widehat{\zeta}_t+\frac{ik}{1+bk^{2}}\left(\frac{1}{\gamma}-\frac{1}{\gamma^2}{|k|{\rm coth}|k|}-\frac{ak^{2}}{\gamma}+\frac{k^{2}}{\gamma^{3}}{{\rm coth}^{2}|k|}\right)
\widehat{u}=\frac{i}{\gamma}\frac{k}{1+bk^{2}}\widehat{\zeta u}\ ,\\
\widehat{u}_t+ik\frac{(1-\gamma)(1-ck^{2})}{1+dk^{2}}\widehat{\zeta}=\frac{i}{2\gamma}\frac{k}{1+dk^{2}}\widehat{u^{2}}\ ,\\
\widehat{\zeta}(k,0)=\widehat{\zeta_{0}}(k), \widehat{u}(k,0)=\widehat{u_{0}}(k)\ ,
\end{array}
\end{equation}
In the sequel we let for $k$ real
\begin{equation}\label{dds33}
g(k):=\frac{1}{\gamma}-\frac{1}{\gamma^2}{|k|{\rm coth}|k|}-\frac{ak^{2}}{\gamma}+\frac{k^{2}}{\gamma^{3}}{{\rm coth}^{2}|k|}.
\end{equation}
It is clear from the properties of the functions $\tau_{1}(k), \tau_{2}(k)$, defined in section \ref{sec21}, that $g$ is well defined, even, and continuous, when viewed as a function of $k\in\mathbb{R}$. It is not hard to see, using Taylor expansions, that for $|x|<<1$
\begin{equation*}\label{dds34}
g(x)=\frac{1}{\gamma}\left(1+\frac{1}{\gamma}\left(\frac{1}{\gamma}-1\right)+|a|x^{2}+\frac{x^{2}}{3\gamma}\left(\frac{1}{\gamma}-1\right)\right)+O(x^{4}).
\end{equation*}
Since $0<\gamma<1$, the coefficients of this expression are positive. Hence $g(x)$ is positive for small $|x|$ and behaves approximately like a quadratic with positive coefficients. For $|x|>>1$ we have
\begin{equation*}\label{dds35}
g(x)\sim \frac{1}{\gamma}\left(1-\frac{x}{\gamma}+|a|x^{2}+\frac{1}{\gamma^{2}}x^{2}\right).
\end{equation*}
It can also be seen that for all $x$, $g(x)$ is positive and that $g'(x)>0$ for $x>0$. We conclude that there exist positive constants $d_{i}, d_{i}', i=1,2$ such that
\begin{equation}\label{dds36}
d_{1}'+d_{2}'k^{2}\leq g(k)\leq d_{1}+d_{2}k^{2},\quad k\in\mathbb{R}.
\end{equation}
The Fourier-Galerkin semidiscrete approximation to (\ref{dds32}) in Fourier space is of the form
\begin{equation}\label{dds37}
\begin{array}{l}
\left(\widehat{\zeta_{N}}\right)_{t}+\frac{ik}{1+bk^{2}}g(k)
\widehat{u_{N}}=-\frac{i}{\gamma}\frac{k}{1+bk^{2}}\widehat{\zeta_{N} u_{N}},\; 0\leq t\leq T\ ,\\
\left(\widehat{u_{N}}\right)_t+ik\frac{(1-\gamma)(1-ck^{2})}{1+dk^{2}}\widehat{\zeta_{N}}=\frac{i}{2\gamma}\frac{k}{1+dk^{2}}\widehat{u_{N}^{2}},\; k=-N,\ldots,N\ ,\\
\widehat{\zeta_{N}}(k,0)=\widehat{\zeta_{0}}(k), \widehat{u_{N}}(k,0)=\widehat{u_{0}}(k)\ ,
\end{array}
\end{equation}
The implementation of the semidiscrete scheme uses this formulation.
\begin{proposition}
\label{pro31}
Suppose that $\zeta, u\in H^{\mu},\mu\geq 1$ for $0\leq t\leq T$. Then, for $N$ is sufficiently large, 
\begin{equation}\label{dds38}
\max_{0\leq t\leq T}\left(||\zeta_{N}-\zeta||+||u_{N}-u||\right)\leq CN^{-\mu}.
\end{equation}
\end{proposition}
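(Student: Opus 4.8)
The plan is to adapt the energy--bootstrap argument of Proposition~\ref{pro21}, with one essential new ingredient. In the \lq generic\rq\ case both equations of (\ref{dds32}) are of hyperbolic rather than BBM type: by (\ref{dds36}) the symbol $\frac{ikg(k)}{1+bk^{2}}$ multiplying $\widehat{u}$ in the first equation is of order $1$, and since $c<0$ the symbol $\frac{ik(1-\gamma)(1-ck^{2})}{1+dk^{2}}$ multiplying $\widehat{\zeta}$ in the second equation is also of order $1$. Setting $\theta=\zeta_{N}-P_{N}\zeta$, $\rho=P_{N}\zeta-\zeta$, $\xi=u_{N}-P_{N}u$, $\sigma=P_{N}u-u$, subtracting $P_{N}$ applied to (\ref{dds32}) from (\ref{dds37}) and passing to physical space (all quantities lying in $S_{N}$) gives
\begin{equation*}
\theta_{t}+\mathcal{A}\xi=F_{1},\qquad \xi_{t}+\mathcal{B}\theta=F_{2},\qquad \theta(0)=\xi(0)=0,
\end{equation*}
where $\mathcal{A}$, $\mathcal{B}$ are the Fourier multipliers with symbols $ip(k):=\frac{ikg(k)}{1+bk^{2}}$ and $iq(k):=\frac{ik(1-\gamma)(1-ck^{2})}{1+dk^{2}}$, and $F_{1}=\frac{1}{\gamma}\mathcal{R}_{b}P_{N}A$, $F_{2}=\frac{1}{2\gamma}\mathcal{R}_{d}P_{N}B$, with $A$, $B$ exactly as in (\ref{dds211}), (\ref{dds212}). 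Here $\mathcal{A}$, $\mathcal{B}$ are real and skew-adjoint on $L^{2}$ (purely imaginary symbols, odd in $k$), while $\mathcal{R}_{b}$, $\mathcal{R}_{d}$ remain of order $-1$, so by (\ref{dds26}) we have $||F_{1}||\leq C||A||$ and $||F_{2}||\leq C||B||$.

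Taking $L^{2}$ norms directly would now lose a derivative, so the first step is to choose a weight that annihilates the two dispersive terms. Put $w(k):=q(k)/p(k)$ for $k\neq0$. From (\ref{dds36}) and the elementary two-sided bound $0<c_{1}'\leq\frac{(1-\gamma)(1-ck^{2})}{1+dk^{2}}\leq c_{2}'<\infty$ (valid for all $k$ since $c<0$, $d>0$, $0<\gamma<1$), both $\frac{p(k)}{k}=\frac{g(k)}{1+bk^{2}}$ and $\frac{q(k)}{k}=\frac{(1-\gamma)(1-ck^{2})}{1+dk^{2}}$ are continuous and bounded above and below by positive constants, uniformly in $k\in\mathbb{R}$; hence so is $w(k)$, which extends continuously across $k=0$ with value $(1-\gamma)/g(0)>0$. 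Let $\mathcal{M}$ be the (bounded, self-adjoint, positive) Fourier multiplier with symbol $w(k)$ and set
\begin{equation*}
\mathcal{E}(\theta,\xi):=(\mathcal{M}\theta,\theta)+||\xi||^{2},
\end{equation*}
which is equivalent to $||\theta||^{2}+||\xi||^{2}$. A short computation in Fourier space, using that $\theta$, $\xi$ are real ($\widehat{\theta}(-k)=\overline{\widehat{\theta}(k)}$, etc.), that $p$, $q$ are odd, and that $w(k)p(k)=q(k)$, shows $(\mathcal{M}\mathcal{A}\xi,\theta)+(\mathcal{B}\theta,\xi)=0$; hence
\begin{equation*}
\tfrac12\tfrac{d}{dt}\mathcal{E}(\theta,\xi)=(\mathcal{M}F_{1},\theta)+(F_{2},\xi)\leq C\bigl(||A||\,||\theta||+||B||\,||\xi||\bigr).
\end{equation*}

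The remainder follows the scheme of Proposition~\ref{pro21}. Let $0<t_{N}\leq T$ be maximal with $|\xi|_{\infty}\leq1$ on $[0,t_{N}]$. Using the decompositions (\ref{dds211}), (\ref{dds212}), the bound $|\xi|_{\infty}\leq1$, the boundedness of $|\zeta|_{\infty}$, $|u|_{\infty}$, and the approximation properties of $P_{N}$ (in particular $||\rho||$, $||\sigma||\leq CN^{-\mu}$ and $|\rho|_{\infty}$, $|\sigma|_{\infty}\leq C$, since $\mu\geq1>1/2$), one gets $||A||$, $||B||\leq C\bigl(N^{-\mu}+||\theta||+||\xi||\bigr)$ on $[0,t_{N}]$, whence
\begin{equation*}
\tfrac{d}{dt}\mathcal{E}(\theta,\xi)\leq C\bigl(N^{-2\mu}+\mathcal{E}(\theta,\xi)\bigr),\qquad 0<t\leq t_{N}.
\end{equation*}
Since $\mathcal{E}(0)=0$, Gronwall's inequality gives $\mathcal{E}(\theta,\xi)\leq CN^{-2\mu}$, i.e. $||\theta||+||\xi||\leq CN^{-\mu}$ on $[0,t_{N}]$. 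As $\xi\in S_{N}$, the inverse inequality $|\xi|_{\infty}\leq CN^{1/2}||\xi||$ then yields $|\xi|_{\infty}\leq CN^{1/2-\mu}<1$ for $N$ sufficiently large (here $\mu\geq1>1/2$ is used), so $t_{N}$ was not maximal; the argument continues up to $t_{N}=T$, which in particular shows that $\zeta_{N}$, $u_{N}$ exist on $[0,T]$. Finally (\ref{dds38}) follows from the triangle inequality together with $||\rho||$, $||\sigma||\leq CN^{-\mu}$.

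I expect the main obstacle to be the construction of the weight $w$ and the verification that $q/p$ is bounded above and away from zero uniformly in $k$: this is precisely where the hypotheses $a<0$, $c<0$ (through the quadratic two-sided estimate (\ref{dds36}) and the non-vanishing of $1-ck^{2}$) and $0<\gamma<1$ are used. A secondary point, again forced by the order-$1$ dispersion, is that only $L^{2}$---not $H^{1}$---control of $\theta$, $\xi$ is now available, so the continuation argument at $t_{N}$ must be closed through the inverse inequality on $S_{N}$ rather than a Sobolev embedding, which is why $\mu\geq1$ (indeed $\mu>1/2$) is needed.
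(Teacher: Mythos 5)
Your proof is correct, and it takes a genuinely different route from the paper. The paper diagonalizes the dispersive matrix $\mathcal{A}(k)$ explicitly (eigenvalues $\pm\sigma(k)$, eigenvector matrix $\mathcal{J}(k)$ with bounded, non-vanishing entry $\alpha(k)$), passes to the variables $(\eta,\nu)$, invokes the skew-adjointness of the resulting diagonal operator to obtain a unitary group, and closes the estimate via Duhamel's formula and the norm equivalence (\ref{dds317}); this is the \lq ode in Banach space\rq\ scheme of Bona--Chen--Saut. You instead symmetrize: the weight $w(k)=q(k)/p(k)$, bounded above and below by positive constants precisely because of (\ref{dds36}) and $c<0$, turns $\mathrm{diag}(w,1)\,\mathcal{A}(k)$ into a symmetric matrix, so the full dispersive operator is skew-adjoint in the weighted inner product and drops out of $\frac{d}{dt}\mathcal{E}$; a differential (rather than integral) Gronwall inequality then closes the argument. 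The two devices exploit the same structural fact — real eigenvalues with a uniformly bounded, uniformly invertible eigenvector matrix is equivalent to the existence of a bounded positive symmetrizer — but your version is more elementary (no semigroup theory, no change of unknowns) and transparently isolates where $a<0$, $c<0$, $0<\gamma<1$ enter, namely in the two-sided bounds on $p(k)/k$ and $q(k)/k$. The remaining steps (the estimates for $A$, $B$, the bootstrap on $|\xi|_{\infty}\leq 1$, and the inverse inequality $|\xi|_{\infty}\leq CN^{1/2}\|\xi\|$ on $S_{N}$ to show $t_{N}$ was not maximal) coincide with the paper's. Two points you use implicitly but should note: the reality of $\theta$, $\xi$ (needed for the oddness cancellation and inherited from real data, as the paper also tacitly assumes), and the evenness of $w(k)$, which makes $\mathcal{M}$ self-adjoint and real-preserving.
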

\begin{proof}
As usual we define $\theta=\zeta_{N}-P_{N}\zeta, \rho=P_{N}\zeta-\zeta$, so that $\zeta_{N}-\zeta=\theta+\rho$, $\xi=u_{N}-P_{N}u, \sigma=P_{N}u-u$, so that $u_{N}-u=\xi+\sigma$. Then, considering the equations (\ref{dds32}) for $k\in\mathbb{Z}, -N\leq k\leq N$ (i.~e. effectively applying $P_{N}$ on both sides of the pde's in (\ref{dds31})) and subtracting them from the relevant semidiscrete equations (\ref{dds37}) we obtain, for $-N\leq k\leq N$
\begin{eqnarray}
&&\widehat{\theta}_{t}+\frac{ik}{1+bk^{2}}g(k)
\widehat{\xi}=\frac{i}{\gamma}\frac{k}{1+bk^{2}}\widehat{P_{N}A},\label{dds391}\\
&&\widehat{\xi}_t+ik\frac{(1-\gamma)(1-ck^{2})}{1+dk^{2}}\widehat{\theta}=\frac{i}{2\gamma}\frac{k}{1+dk^{2}}\widehat{P_{N}B},\label{dds392}\\
&&\widehat{\theta}(k,0)=0, \widehat{\xi}(k,0)=0.\label{dds393}
\end{eqnarray}
Here $A$ and $B$ are given by (\ref{dds211}) and (\ref{dds212}) respectively. The semidiscrete ivp (\ref{dds37}) has a local in time unique solution and so does therefore the ivp (\ref{dds391})-(\ref{dds393}). It will be part of the proof in the sequel to show that the validity of these solutions may be extended up to $t=T$.

For the convergence proof we adapt in our finite-dimensional case a general \lq ode in Banach space\rq-style proof given e.~g. in section 2.2 (Theorem 2.5) of \cite{BonaChS2004}; see also \cite{A,DougalisMS2007}. We write  (\ref{dds391})-(\ref{dds393}) as
\begin{eqnarray}
&&\begin{pmatrix} \widehat{\theta}_{t}\\ \widehat{\xi}_t\end{pmatrix}+ik\mathcal{A}(k)\begin{pmatrix} \widehat{\theta}\\ \widehat{\xi}\end{pmatrix}=\begin{pmatrix} r_{1}\\ r_{2}\end{pmatrix},\; t\geq 0,\label{dds3101}\\
&&\widehat{\theta}(k,0)=0, \widehat{\xi}(k,0)=0,\label{dds3102}
\end{eqnarray}
where
\begin{equation*}\label{dds311}
\mathcal{A}(k)=\begin{pmatrix} 0&\frac{g(k)}{1+bk^{2}}\\ \frac{(1-\gamma)(1-ck^{2})}{1+dk^{2}}&0\end{pmatrix},
\end{equation*}
and
\begin{equation}\label{dds312}
r_{1}=r_{1}(k)=\frac{i}{\gamma}\frac{k}{1+bk^{2}}\widehat{P_{N}A},\;
r_{2}=r_{2}(k)=\frac{i}{2\gamma}\frac{k}{1+dk^{2}}\widehat{P_{N}B}.
\end{equation}
We now diagonalize the system (\ref{dds3101}). Note that the eigenvalues of $\mathcal{A}$ are real and are given by $\lambda_{\pm}=\pm\sigma$, where
\begin{equation*}\label{dds313}
\sigma=\sigma(k)=\left(\frac{(1-\gamma)g(k)(1-ck^{2})}{(1+bk^{2})(1+dk^{2})}\right)^{1/2},
\end{equation*}
while the corresponding eigenvectors are $v_{\pm}=(\pm\alpha,1)^{T}$, where
\begin{equation}\label{dds314}
\alpha=\alpha(k)=\left(\frac{g(k)(1+dk^{2})}{(1-\gamma)(1+bk^{2})(1-ck^{2})}\right)^{1/2}.
\end{equation}
Defining therefore the matrix $\mathcal{J}=\mathcal{J}(k)=\begin{pmatrix} \alpha&-\alpha\\1&1\end{pmatrix}$ we have
$$\mathcal{J}^{-1}=\frac{1}{2\alpha}\begin{pmatrix} 1&\alpha\\-1&\alpha\end{pmatrix}\;{\rm and}\; 
\mathcal{J}^{-1}\mathcal{A}\mathcal{J}=\begin{pmatrix}\sigma&0\\ 0&-\sigma\end{pmatrix}.$$
We now let $\eta$ and $\nu$  be elements of the finite-dimensional space $S_{N}$ defined for $-N\leq k\leq N$, $t\geq 0$, by
\begin{equation}\label{dds315}
\mathcal{J}^{-1}\begin{pmatrix} \widehat{\theta}\\ \widehat{\xi}\end{pmatrix}=\begin{pmatrix} \widehat{\eta}\\ \widehat{\nu}\end{pmatrix},
\end{equation}
so that
\begin{equation}\label{dds316}
\begin{array}{l}
\widehat{\eta}=\frac{1}{2\alpha}\widehat{\theta}+\frac{1}{2}\widehat{\xi},\quad \widehat{\xi}=\widehat{\eta}+\widehat{\nu}\ ,\\
\widehat{\nu}=\frac{-1}{2\alpha}\widehat{\theta}+\frac{1}{2}\widehat{\xi},\quad \widehat{\theta}=\alpha(k)(\widehat{\eta}-\widehat{\nu})\ .
\end{array}
\end{equation}
Note that since $\alpha(k)$ is of order $0$, in view of (\ref{dds36}), (\ref{dds314}), and $\alpha(k)\neq 0$ we have, by (\ref{dds316}) and Plancherel's formula that
\begin{equation}\label{dds317}
C_{2}\left(||\eta||_{s}+||\nu||_{s}\right)\leq ||\theta||_{s}+||\xi||_{s}\leq C_{1}\left(||\eta||_{s}+||\nu||_{s}\right)\ , s\geq 0\ ,
\end{equation}
for some positive constants $C_{1}, C_{2}$ independent of $N, s, \theta, \xi, \eta, \nu$. In view of (\ref{dds315}), (\ref{dds316}) we transform the system (\ref{dds3101}), (\ref{dds3102}) into
\begin{equation}\label{dds318}
\begin{array}{l}
\begin{pmatrix} \widehat{\eta}_{t}\\ \widehat{\nu}_t\end{pmatrix}+ik\begin{pmatrix}\sigma&0\\ 0&-\sigma\end{pmatrix}\begin{pmatrix} \widehat{\eta}\\ \widehat{\nu}\end{pmatrix}=\begin{pmatrix} p_{1}(k)\\ p_{2}(k)\end{pmatrix}:=\mathcal{J}^{-1}\begin{pmatrix} r_{1}(k)\\ r_{2}(k)\end{pmatrix}\ ,\\
\widehat{\eta}\big|_{t=0}=0, \widehat{\nu}\big|_{t=0}=0\ .
\end{array}
\end{equation}
We write this ivp in physical variables as
\begin{equation}\label{dds319}
\begin{array}{l}
\begin{pmatrix} {\eta}_{t}\\ {\nu}_t\end{pmatrix}+\mathcal{B}\begin{pmatrix} {\eta}\\ {\nu}\end{pmatrix}=F(\eta,\nu)\ ,x\in [0,1]\ , t\geq 0\ ,\\
{\eta}\big|_{t=0}=0, {\nu}\big|_{t=0}=0\ ,
\end{array}
\end{equation}
where $\mathcal{B}$ is a $2\times 2$ matrix operator with symbol $ik\begin{pmatrix}\sigma&0\\ 0&-\sigma\end{pmatrix}$ and $F(\eta,\nu)$ is a $2$-vector of periodic functions with Fourier coefficients given by
\begin{equation}\label{dds320}
\widehat{F(\eta,\nu)}(k)=\begin{pmatrix} p_{1}(k)\\ p_{2}(k)\end{pmatrix}\ ,
\end{equation}
where $(p_{1},p_{2})^{T}$ is the right-hand side of (\ref{dds318}).

Since $\mathcal{B}=i\mathcal{C}$, where $\mathcal{C}$ is a real symmetric operator matrix, $\mathcal{B}$ is a skew-adjoint operator with domain $H^{1}\times H^{1}$ and $U(t)=e^{-it\mathcal{B}}$ is a unitary group, cf. \cite{K}, p. 435, say on the Hilbert space $X=L^{2}\times L^{2}$.

Solving (\ref{dds319}) we obtain by Duhamel's formula
$$\begin{pmatrix}\eta\\ \nu\end{pmatrix}(t)=\int_{0}^{t}U(t-s)F(\eta,\nu)ds,$$ and since $||UF||_{X}=||F||_{X}$, we obtain
\begin{equation}\label{dds321}
\left\|\begin{pmatrix}\eta(t)\\ \nu(t)\end{pmatrix}\right\|_{X}\leq \int_{0}^{t}||F(\eta,\nu)||_{X}ds\ .
\end{equation}
We now estimate the right-hand side of (\ref{dds321}). From (\ref{dds320}), since $\alpha\neq 0$ and $\alpha$ is bounded, we have by (\ref{dds320}), (\ref{dds318}), (\ref{dds312})
\begin{eqnarray}
||F(\eta,\nu)||_{X}&\leq &C\left(||p_{1}||+||p_{2}||\right)\leq C\left(||r_{1}||+||r_{2}||\right)\nonumber\\
&\leq & C\left(||(1-b\partial_{x}^{2})^{-1}\partial_{x}P_{N}A||+ ||(1-d\partial_{x}^{2})^{-1}\partial_{x}P_{N}B||\right)\nonumber\\
&\leq & C\left(||A||+||B||\right),\label{dds322}
\end{eqnarray}
where we recall that $A$ and $B$ are given by (\ref{dds211}), (\ref{dds212}), respectively, and for the last inequality in (\ref{dds322}) see e.~g. (\ref{dds25}), (\ref{dds26}). Hence, by (\ref{dds322}), (\ref{dds321}), and (\ref{dds317}), we obtain, as long as $\theta$ and $\xi$ exist, that
\begin{equation}\label{dds323}
||\theta||+||\xi||\leq C\int_{0}^{t}\left(||A||+||B||\right) ds\ .
\end{equation}
We now note that
\begin{eqnarray*}
||A||&\leq & |u|_{\infty}||\rho||+|\zeta|_{\infty}||\sigma||+|u|_{\infty}||\theta||+|\zeta|_{\infty}||\xi||\\
&&+|\sigma|_{\infty}||\theta||+|\rho|_{\infty}||\xi||+|\rho|_{\infty}||\sigma||+|\xi|_{\infty}||\theta||.
\end{eqnarray*}
Since $|\sigma|_{\infty}, |\rho|_{\infty}$ are bounded because of our assumption on $\mu$, if $0<t_{N}\leq T$ is the maximal temporal instance for which
\begin{equation}\label{dds324}
|\xi|_{\infty}\leq 1,\quad 0\leq t\leq t_{N}\ ,
\end{equation}
holds, we obtain from the above estimate of $A$, that
\begin{equation}\label{dds325}
||A||\leq C\left(N^{-\mu}+||\theta||+||\xi||\right),\quad 0\leq t\leq t_{N}\ ,
\end{equation}
where $C$ is independent of $N$ and $t_{N}$. For $B$ we see that
\begin{equation*}
||B||\leq |u|_{\infty}||\sigma||+|u|_{\infty}||\xi||+|\sigma|_{\infty}||\xi||+\frac{1}{2}|\sigma|_{\infty}||\sigma||+\frac{1}{2}|\xi|_{\infty}||\xi||\ .
\end{equation*}
Therefore, from (\ref{dds324}), as before,
\begin{equation}\label{dds326}
||B||\leq C\left(N^{-\mu}+||\xi||\right),\quad 0\leq t\leq t_{N}\ .
\end{equation}
From (\ref{dds323}), (\ref{dds325}), (\ref{dds326}) and Gromwall's Lemma we see that $||\theta||+||\xi||\leq CN^{-\mu}, 0\leq t\leq t_{N}$, where $C$ is independent of $N, t_{N}$. Therefore $|\xi|_{\infty}\leq CN^{1/2-\mu}<1$ for $N$ large enough, and $t_{N}$ was not maximal in (\ref{dds324}). In the usual manner we conclude that $\theta$ and $\xi$ exist up to $t=T$ and that $||\theta||+||\xi||\leq CN^{-\mu}, 0\leq t\leq T$. Hence (\ref{dds38}) follows.
\end{proof}
\subsection{B/FD systems with $b,d>0$, and either $a<0, c=0$ or $a=0,c<0$}
\label{sec23}
We now consider the B/FD systems of categories (2) and (3) of Table \ref{BFD_t1}, i.~e. those for which $b,d>0$, and either $a<0, c=0$ or $a=0, c<0$. Since $b,d>0$, the Fourier equations (\ref{dds32}) are well defined and so are the semidiscrete equations (\ref{dds37}), {\it mutatis mutandis}. Note that $g(k)$, defined in (\ref{dds33}), depends only on $a$ and for those categories of systems $a\leq 0$. Hence $g$ is well defined for $k\in\mathbb{R}$, has all its previous properties (evenness, continuity), and (\ref{dds36}) holds for positive constants that we again label $d_{i}, d_{i}', i=1,2$.

For the convergence of the semidiscrete schemes we have an analogous result to that of Proposition \ref{pro31}:
\begin{proposition}
\label{pro41}
Let the solution $\zeta, u$ of the periodic ivp for the B/FD systems of either category (2) or (3) in Table \ref{BFD_t1} belongs to $H^{\mu}$ for $\mu\geq 1$, for $0\leq t\leq T$. With our usual notation, if $N$ is sufficiently large,
\begin{equation}\label{dds41}
\max_{0\leq t\leq T}\left(||\zeta_{N}-\zeta||+||u_{N}-u||\right)\leq CN^{-\mu}.
\end{equation}
\end{proposition}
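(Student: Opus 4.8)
The plan is not to reprove the estimates of Sections~\ref{sec21} and~\ref{sec22}, but to recognize that each of the two subcases has the dispersive structure of exactly one of those earlier results and to reduce it to that one. In both subcases the preliminary reductions are the familiar ones: since $b,d>0$, Fourier analysis casts the periodic ivp in the form~(\ref{dds37}), with the appropriate $g(k)$ from~(\ref{dds33}) and with the factor $1-ck^{2}$ in the second equation, and with the notation $\theta=\zeta_{N}-P_{N}\zeta$, $\rho=P_{N}\zeta-\zeta$, $\xi=u_{N}-P_{N}u$, $\sigma=P_{N}u-u$, subtracting the $P_{N}$-projected pde's from the semidiscrete equations gives the error system~(\ref{dds391})--(\ref{dds393}) with $A,B$ as in~(\ref{dds211}),~(\ref{dds212}). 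What distinguishes the subcases is the order of the coupling Fourier multipliers, and this dictates which argument applies.

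\textbf{The subcase $a=0$, $c<0$ (category (3)).} The second equation still carries a genuine third-order term, so its multiplier $ik(1-\gamma)(1-ck^{2})/(1+dk^{2})$ is of order $1$; and although $a=0$, the term $\gamma^{-3}k^{2}{\rm coth}^{2}|k|$ keeps $g(k)$ of quadratic growth, so~(\ref{dds36}) holds, the coefficient $\alpha(k)$ of~(\ref{dds314}) satisfies $0<c_{0}\le\alpha(k)\le c_{1}<\infty$ (order zero), and $\sigma(k)$ is bounded. Hence the proof of Proposition~\ref{pro31} applies verbatim: the diagonalization~(\ref{dds315})--(\ref{dds317}), the reduction to~(\ref{dds319}) with $\mathcal{B}=i\mathcal{C}$ skew-adjoint and $U(t)=e^{-it\mathcal{B}}$ unitary on $X=L^{2}\times L^{2}$, the Duhamel bound yielding $\|\theta\|+\|\xi\|\le C\int_{0}^{t}(\|A\|+\|B\|)\,ds$ as in~(\ref{dds321})--(\ref{dds323}), the nonlinear estimates~(\ref{dds325}),~(\ref{dds326}) on the set where $|\xi|_{\infty}\le1$, and Gronwall's lemma together with the inverse inequality $|\xi|_{\infty}\le CN^{1/2}\|\xi\|$ closing the bootstrap; this gives~(\ref{dds41}).

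\textbf{The subcase $a<0$, $c=0$ (category (2)).} Now the second equation has no third-order term, so its coupling multiplier $ik(1-\gamma)/(1+dk^{2})$ is of order $-1$, as in the BBM--BBM case of Section~\ref{sec21}, while the first equation contains instead the operator with symbol $ikg(k)/(1+bk^{2})$, of order $1$ because $g(k)=O(k^{2})$, in the role played there by $\mathcal{S}_{1},\mathcal{S}_{2}$. Accordingly I would argue as in the proof of Proposition~\ref{pro21}: taking $L^{2}$ norms in the first error equation gives $\|\theta_{t}\|\le C(\|\xi\|_{1}+\|A\|)$; taking $H^{1}$ norms in the second, and using that $\mathcal{R}_{d}=T_{d}\partial_{x}$ is of order $-1$, gives $\|\xi_{t}\|_{1}\le C(\|\theta\|+\|B\|)$; the nonlinear estimates~(\ref{dds218}),~(\ref{dds220}) then give $\|A\|,\|B\|\le C(N^{-\mu}+\|\theta\|+\|\xi\|_{1})$ while $|\xi|_{\infty}\le1$; and Gronwall's lemma applied to $\|\theta\|+\|\xi\|_{1}$, with $|\xi|_{\infty}\le C\|\xi\|_{1}$, closes the bootstrap and delivers~(\ref{dds41}). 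A unified alternative covering both subcases is a symmetrized energy estimate: since $1-ck^{2}>0$ and $g>0$, the matrix ${\rm diag}(1,\alpha(k)^{2})$ symmetrizes $\mathcal{A}(k)$, and the weighted quantity $\|\theta\|^{2}+\sum_{k}\alpha(k)^{2}|\widehat{\xi}(k)|^{2}$, whose square root dominates $\|\theta\|+\|\xi\|$ because $\alpha$ is bounded below, satisfies an energy inequality with right-hand side bounded by $C(\|A\|+\|B\|)$, after which Gronwall and the bootstrap finish as before.

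The one point that needs genuine attention is that the diagonalization of Section~\ref{sec22} is \emph{not} uniformly valid when $c=0$: in that case $\sigma(k)=O(1/|k|)$ while $\alpha(k)=O(|k|)$, so the norm equivalence~(\ref{dds317}) degenerates and the $L^{2}$-Duhamel bound ceases to control $\|\theta\|$; that is exactly why category (2) must be routed through the Proposition~\ref{pro21} argument (or through the symmetrizer) rather than through Proposition~\ref{pro31}. Apart from this, the proof uses only the two-sided bound~(\ref{dds36}) and the positivity of $g$ --- both already recorded before the statement --- together with the routine verification of the orders of the multipliers $ikg(k)/(1+bk^{2})$, $ik(1-\gamma)(1-ck^{2})/(1+dk^{2})$, and of $\alpha(k),\sigma(k)$ in each subcase.
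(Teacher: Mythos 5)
Your argument is correct, and for category (3) it coincides with the paper's proof: there $\alpha(k)$ and $\sigma(k)$ remain of order zero, so the diagonalization argument of Proposition~\ref{pro31} applies verbatim, exactly as you say. For category (2) you take a genuinely different route. The paper does \emph{not} abandon the diagonalization when $c=0$; it keeps the change of variables~(\ref{dds315}) but observes that, since $\alpha(k)$ is now of order $1$ and bounded below, the norm equivalence~(\ref{dds317}) is replaced by $C_{2}(\|\eta\|_{s}+\|\nu\|_{s})\leq\|\theta\|_{s-1}+\|\xi\|_{s}\leq C_{1}(\|\eta\|_{s}+\|\nu\|_{s})$ for $s\geq 1$, and then runs Duhamel for the unitary group on $X=H^{1}\times H^{1}$ rather than $L^{2}\times L^{2}$, landing on the same bound $\|\theta\|+\|\xi\|_{1}\leq CN^{-\mu}$. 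So your diagnosis that the $L^{2}$-level equivalence degenerates is right, but your conclusion that category (2) \emph{must} be routed through the Proposition~\ref{pro21} machinery is too strong --- the fix is merely to raise the Sobolev index in the diagonalized variables. That said, your alternative is perfectly sound: the coupling multiplier $ikg(k)/(1+bk^{2})$ is of order $1$ and $ik(1-\gamma)/(1+dk^{2})$ is of order $-1$, which is precisely the imbalance of the BBM--BBM case, so the direct estimates $\|\theta_{t}\|\leq C(\|\xi\|_{1}+\|A\|)$, $\|\xi_{t}\|_{1}\leq C(\|\theta\|+\|B\|)$ together with~(\ref{dds218}),~(\ref{dds220}), Gronwall, and the bootstrap $|\xi|_{\infty}\leq\|\xi\|_{1}$ close the argument and give the same conclusion. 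Your approach buys a more elementary proof that bypasses the semigroup/Duhamel formalism entirely for this subcase (and the symmetrizer remark explains structurally why it works); the paper's approach buys uniformity of presentation, treating all three non-BBM cases by one diagonalization with only the function space adjusted.
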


\begin{proof}
(i) We first consider the case of the systems of category (3), i.~e. those with $b,d>0, a=0, c<0$. We use the same notation as in the proof of Proposition \ref{pro31} {\it mutatis mutandis}. Note that the solution of the analogous to (\ref{dds391})-(\ref{dds393}) ivp exists locally in time, and that $\alpha(k)$ is still of order $0$ and does not vanish for $k\in\mathbb{R}$. Hence (\ref{dds317}) holds here too. The rest of the proof follows.

(ii) Let now $b,d>0, a<0, c=0$. Using the same notation as in the proof of Proposition \ref{pro31} we see that in this case as well the solution of (\ref{dds391})-(\ref{dds393}) exists locally in $t$. Now $\alpha$ is given for $k\in\mathbb{R}$ by $\alpha(k)=\left(\frac{g(k)(1+dk^{2})}{(1-\gamma)(1+bk^{2})}\right)^{1/2}$; hence its order is equal to $1$ and it does not vanish for $k\in\mathbb{R}$. We easily conclude that now $(\theta,\xi)\in H^{s-1}\times H^{s}\Leftrightarrow (\eta,\nu)\in H^{s}\times H^{s}$ and that (\ref{dds317}) is replaced by
\begin{equation*}\label{dds42}
C_{2}\left(||\eta||_{s}+||\nu||_{s}\right)\leq ||\theta||_{s-1}+||\xi||_{s}\leq C_{1}\left(||\eta||_{s}+||\nu||_{s}\right),\quad s\geq 1\ .
\end{equation*}
Working now in the Hilbert space $X=H^{1}\times H^{1}$, we conclude that (\ref{dds321}) holds and that
\begin{eqnarray*}
||F(\eta,\nu)||_{X}
&\leq & C\left(||(1-b\partial_{x}^{2})^{-1}\partial_{x}P_{N}A||+ ||(1-d\partial_{x}^{2})^{-1}\partial_{x}P_{N}B||_{1}\right)\nonumber\\
&\leq & C\left(||A||+||B||\right),\label{dds43}
\end{eqnarray*}
where for the last inequality see e.~g. (\ref{dds25}), (\ref{dds26}). Proceeding now as in the proof of Proposition \ref{pro31} we obtain the slightly better result that $\theta, \xi$ exist up to $t=T$ and satisfy $||\theta||+||\xi||_{1}\leq CN^{-\mu}, 0\leq t\leq T$. Hence (\ref{dds41}) holds.
\end{proof}

\section{Solitary wave solutions. Numerical generation}
\label{sec3}
In this section we investigate numerically the existence of solitary wave solutions of (\ref{BFD}) and provide a numerical method to compute approximate solitary-wave profiles. They will be used in section \ref{sec4} to perform a computational study of their dynamics.

To this end, in this and the following sections we will consider the scaled version of (\ref{BFD}), written in terms of the parameters (\ref{param}) and given by
\begin{eqnarray}
J_{b}\partial_{t}\zeta+\mathcal{L}_{\mu_{2}}\partial_{x}u-\frac{\epsilon}{\gamma}\partial_{x}(\zeta u)&=&0,\nonumber\\
J_{d}\partial_{t}u+(1-\gamma)J_{c}\partial_{x}\zeta-\frac{\epsilon}{2\gamma}\partial_{x}(u^{2})&=&0,\label{BFD1}
\end{eqnarray}
where
\begin{eqnarray*}
J_{b}=1-\mu b\partial_{x}^{2},\; J_{d}=1-\mu d\partial_{x}^{2},\; J_{c}=1+\mu c\partial_{x}^{2}, 
\end{eqnarray*}
\begin{eqnarray*}
\mathcal{L}_{\mu_{2}}=\frac{1}{\gamma}-\frac{\sqrt{\mu}}{\gamma^{2}}|D|{\rm coth}(\sqrt{\mu_{2}}|D|)+\frac{\mu}{\gamma}\left(a-\frac{1}{\gamma^{2}}{\rm coth}^{2}(\sqrt{\mu_{2}}|D|)\right)\partial_{x}^{2}.
\end{eqnarray*}
Solitary wave solutions of (\ref{BFD1}) are solutions of the form $\zeta=\zeta(x-c_{s} t), u=u(x-c_{s} t), c_{s}\neq 0$, for smooth profiles $\zeta=\zeta(X), u=u(X), X=x-c_{s}t$, vanishing at $|X|\rightarrow\infty$ and satisfying
\begin{eqnarray}
-c_{s} J_{b}\zeta+\mathcal{L}_{\mu_{2}}u&=&\frac{\epsilon}{\gamma}(\zeta u),\nonumber\\
-c_{s} J_{d}u+(1-\gamma)J_{c}\zeta&=&\frac{\epsilon}{2\gamma}(u^{2}).\label{BFD2}
\end{eqnarray}
In terms of the Fourier transform of the profiles, (\ref{BFD2}) reads, for $k\in\mathbb{R}$
\begin{eqnarray}
-c_{s} j_{b}(k)\widehat{\zeta}(k)+{l}_{\mu_{2}}(k)\widehat{u}(k)&=&\frac{\epsilon}{\gamma}\widehat{(\zeta u)}(k),\nonumber\\
-c_{s} j_{d}(k)\widehat{u}(k)+(1-\gamma)j_{c}(k)\widehat{\zeta}(k)&=&\frac{\epsilon}{2\gamma}\widehat{(u^{2})}(k),\label{BFD2b}
\end{eqnarray}
where 
\begin{eqnarray}
j_{\alpha}(k)&=&1+\mu |\alpha|{k}^{2},\; \alpha=b,c,d,\nonumber\\
{l}_{\mu_{2}}(k)&=&\frac{1}{\gamma}-\frac{\sqrt{\mu}}{\gamma^{2}}|{k}|{\rm coth}(\sqrt{\mu_{2}}|{k}|)-\frac{\mu}{\gamma}\left(a-\frac{1}{\gamma^{2}}{\rm coth}^{2}(\sqrt{\mu_{2}}|{k}|)\right){k}^{2},\; k\neq 0,\nonumber\\
{l}_{\mu_{2}}(0)&=&\frac{1}{\gamma}-\frac{\sqrt{\mu}}{\gamma^{2}}\frac{1}{\sqrt{\mu_{2}}}+\frac{\mu}{\gamma^{3}}\frac{1}{\mu_{2}}.\label{BFD2c}
\end{eqnarray}
\subsection{Numerical technique of approximation}
Described here is the numerical procedure to approximate solitary wave solutions $(\zeta, u)$ of (\ref{BFD1}). For an integer $N\geq 1$, the system (\ref{BFD2}) is discretized on a long enough interval $(-L,L)$, with periodic boundary conditions, by Fourier pseudospectral approximations to the values of the profiles at a uniform grid of collocation points
\begin{eqnarray}
x_{j}=-L+j h,\; j=0,\ldots, N,\; h=\frac{2L}{N}.\label{34b}
\end{eqnarray}
The vector approximation $(\zeta_{h},u_{h})$, where $$\zeta_{h}=(\zeta_{h,0},\ldots,\zeta_{h,N-1})^{T},\quad u_{h}=(u_{h,0},\ldots,u_{h,N-1})^{T},$$ with $\zeta_{h,j}$ (resp. $u_{h,j}$) approximating $\zeta(x_{j})$ (resp. $u(x_{j})$), $j=0,\ldots,N-1$, must satisfy the system
\begin{eqnarray}
-c_{s} J_{b,h}\zeta_{h}+\mathcal{L}_{\mu_{2},h}u_{h}&=&\frac{\epsilon}{\gamma}(\zeta_{h}\cdot u_{h}),\nonumber\\
-c_{s} J_{d,h}u_{h}+(1-\gamma)J_{c,h}\zeta_{h}&=&\frac{\epsilon}{2\gamma}(u_{h}\cdot^{2}),\label{BFD3}
\end{eqnarray}
where
\begin{equation}\label{BFD4}
\begin{array}{l}
J_{b,h}=1-\mu bD_{N}^{2},\; J_{d,h}=1-\mu dD_{N}^{2},\; J_{c,h}=1+\mu cD_{N}^{2}, \\
\mathcal{L}_{\mu_{2},h}=\frac{1}{\gamma}-\frac{\sqrt{\mu}}{\gamma^{2}}|D_{N}|{\rm coth}(\sqrt{\mu_{2}}|D_{N}|)+\frac{\mu}{\gamma}\left(a-\frac{1}{\gamma^{2}}{\rm coth}^{2}(\sqrt{\mu_{2}}|D_{N}|)\right)D_{N}^{2}.
\end{array}
\end{equation}
In (\ref{BFD3}), (\ref{BFD4}), $I_{N}$ stands for the $N\times N$ identity matrix and 
$D_{N}$ is the $N\times N$ Fourier pseudospectral differentiation matrix based on the $x_{j}$. Furthermore, if $F_{N}$ denotes the $N\times N$ matrix of the discrete Fourier transform, note that
$$F_{N}^{-1}D_{N}F_{N}=C_{N},$$ where $C_{N}$ is diagonal and its diagonal $k$th entry is given by $i\widetilde{k}, \widetilde{k}=\pi k/L, k=0,\ldots,N-1$. Then the matrix $|D_{N}|$ is defined as
$$|D_{N}|=F_{N}|C_{N}|F_{N}^{-1},$$ where $|C_{N}|$ denotes the $N\times N$ diagonal matrix with diagonal entries given by $|\widetilde{k}|$. In addition,
the dots in the nonlinear terms of (\ref{BFD3}) denote the Hadamard product of vectors (they are dropped from now on). The resolution of (\ref{BFD3}) makes use of the Fourier representation, cf. (\ref{BFD2b}), (\ref{BFD2c})
\begin{eqnarray}
-c_{s} j_{b}(\widetilde{k})\widehat{\zeta_{h}}(k)+{l}_{\mu_{2}}(\widetilde{k})\widehat{u_{h}}(k)&=&\frac{\epsilon}{\gamma}\widehat{(\zeta_{h}u_{h})}(k),\nonumber\\
-c_{s} j_{d}(\widetilde{k})\widehat{u_{h}}(k)+(1-\gamma)j_{c}(\widetilde{k})\widehat{\zeta_{h}}(k)&=&\frac{\epsilon}{2\gamma}\widehat{(u_{h}^{2})}(k),\label{BFD5}
\end{eqnarray}
where $\widehat{\zeta_{h}}(k), \widehat{u_{h}}(k)$ denote, respectively, the $k$-th discrete Fourier coefficient of $\zeta_{h}$ and $u_{h}$, while for $k=0,\ldots, N-1$, we define $\widetilde{k}=\displaystyle\frac{\pi k}{L}$. 

Now, systems (\ref{BFD5}), written as
\begin{eqnarray*}
\underbrace{
\begin{pmatrix}
-c_{s} j_{b}(\widetilde{k})&{l}_{\mu_{2}}(\widetilde{k})\\
(1-\gamma)j_{c}(\widetilde{k})&-c_{s} j_{d}(\widetilde{k})
\end{pmatrix}
}_{\mathcal{S}_{k}}
\begin{pmatrix}
\widehat{\zeta_{h}}(k)\\ \widehat{u_{h}}(k)
\end{pmatrix}=\underbrace{\frac{\epsilon}{\gamma}
\begin{pmatrix}
\widehat{(\zeta_{h}u_{h})}(k)\\
\frac{\widehat{(u_{h}^{2})}(k)}{2}
\end{pmatrix}
}_{\mathcal{N}(\widehat{\zeta_{h}}(k), \widehat{u_{h}}(k))},
\end{eqnarray*}
are iteratively solved by using the Petviashvili iteration, \cite{Petv1976}
\begin{eqnarray}
m_{h}^{[n]}&=&\frac{\left(\mathcal{L}_{k}z^{[n]},z^{[n]}\right)_{N}}{\left(\mathcal{N}_{k}(z^{[n]}),z^{[n]}\right)_{N}},\label{BFD4b}\\
\mathcal{L}_{k}z^{[n+1]}&=&(m_{h}^{[n]})^{2}\mathcal{N}_{k}(z^{[n]}),\; n=0,1,\ldots,\nonumber
\end{eqnarray}
for $k=0,\ldots, N-1$, where $z^{[n]}=(\zeta_{h}^{[n]},u_{h}^{[n]})$ denotes the $n$-th iterate. The so-called minimal polynomial extrapolation method (MPE), \cite{smithfs}, is applied to (\ref{BFD4b}) in the experiments below, in order to accelerate the convergence. The procedure is performed up to a maximum number of iterations or when the magnitude of the residual error
\begin{eqnarray*}
R^{[n]}=(R_{k}^{[n]})_{k=0}^{N-1},\;R_{k}^{[n]}:=\mathcal{L}_{k}z^{[n]}-\mathcal{N}(z^{[n]}),
\end{eqnarray*}
is, in Euclidean norm, less than a prescribed tolerance. (In the experiments, this was taken about $10^{-12}$.)
\subsection{Numerical experiments and discussion of the results}
In this section we will check the performance of the numerical generation of the solitary waves with (\ref{BFD4b}) and illustrate the theoretical results given in \cite{AnguloS2019}. Furthermore, we will show that the numerical experiments suggest new conditions for the existence of solitary waves, finally proved in Appendix \ref{appendixA}.

For the sake of simplicity, in the computations below we will reduce the dependence on the parameters as follows: define $\epsilon_{d-b}=d-b$. By fixing the values of the modelling parameters $\alpha_{1}, \alpha_{2}$ so as to have $b,d\geq 0, a,c\leq 0$, then  $\beta=\displaystyle\frac{1}{1-\alpha_{2}}\left(\frac{\alpha_{1}}{3}+\epsilon_{d-b}\right)$. The parameter $\epsilon_{d-b}$ will serve us to distinguish the Hamiltonian and the nonhamiltonian cases. In addition, we will also fix the values of $\epsilon, \mu$ and $\mu_{2}$. Specifically, in the experiments below we take $\alpha_{1}=1, \alpha_{2}=-1/2, \epsilon=\mu=1,\mu_{2}=10$. By (\ref{BB1b})
\begin{eqnarray}
a=-\beta, b=\frac{1}{3}, c=-\frac{\beta}{2}, d=b+\epsilon_{d-b},\beta=\frac{2}{3}\left(\frac{1}{3}+\epsilon_{d-b}\right).\label{321}
\end{eqnarray} 
The existence of solutions of (\ref{BFD2}) for the Hamiltonian case $b=d>0, a,c\leq 0$ is analyzed in \cite{AnguloS2019}. More specifically, smooth solitary wave solutions of (\ref{BFD2}) are shown to exist for speeds $c_{s}$ satisfying 
\begin{equation}
|c_{s}|<\omega_{m},\quad \gamma^{2}\alpha_{0}(c_{s})>\sqrt{\frac{\mu}{\mu_{2}}},\label{cond1}
\end{equation}
 where $\omega_{m}=(1-\gamma)|c|/b$ and
\begin{equation}\label{cond2}
\alpha_{0}=\alpha_{0}(c_{s})=\frac{1}{\gamma}-|c_{s}|-\frac{1}{\beta_{0}},\;
\beta_{0}=\beta_{0}(c_{s})=-4\gamma^{4}\left(b|c_{s}|+\frac{1}{\gamma}\left(a-\frac{1}{\gamma^{2}}\right)\right).
\end{equation}
 Futhermore, it is proved that the solitary waves decay exponentially as $|X|\rightarrow\infty$.
\begin{figure}[htbp]
\centering
\subfigure[]
{\includegraphics[width=7cm]{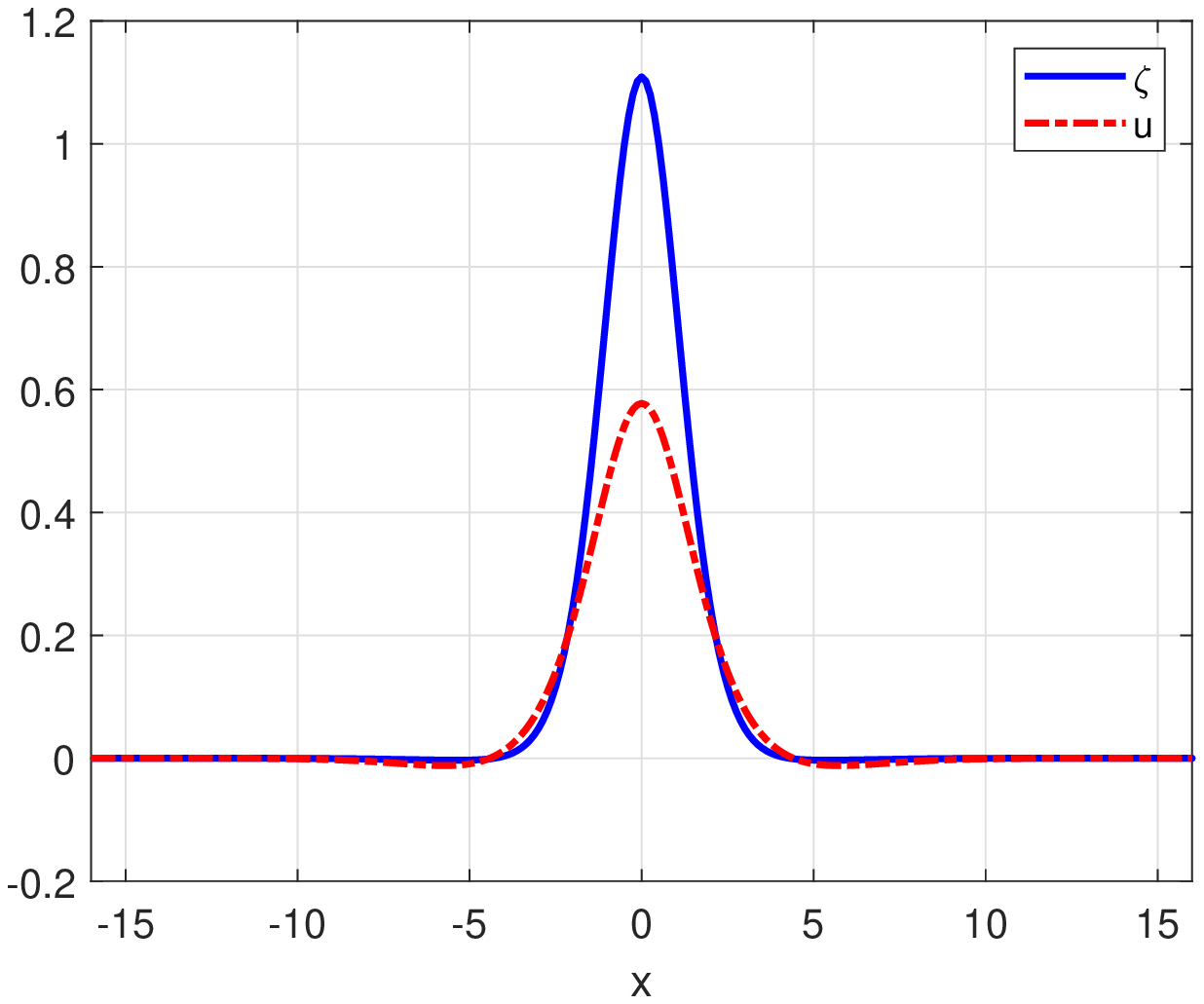}}
\subfigure[]
{\includegraphics[width=6.2cm]{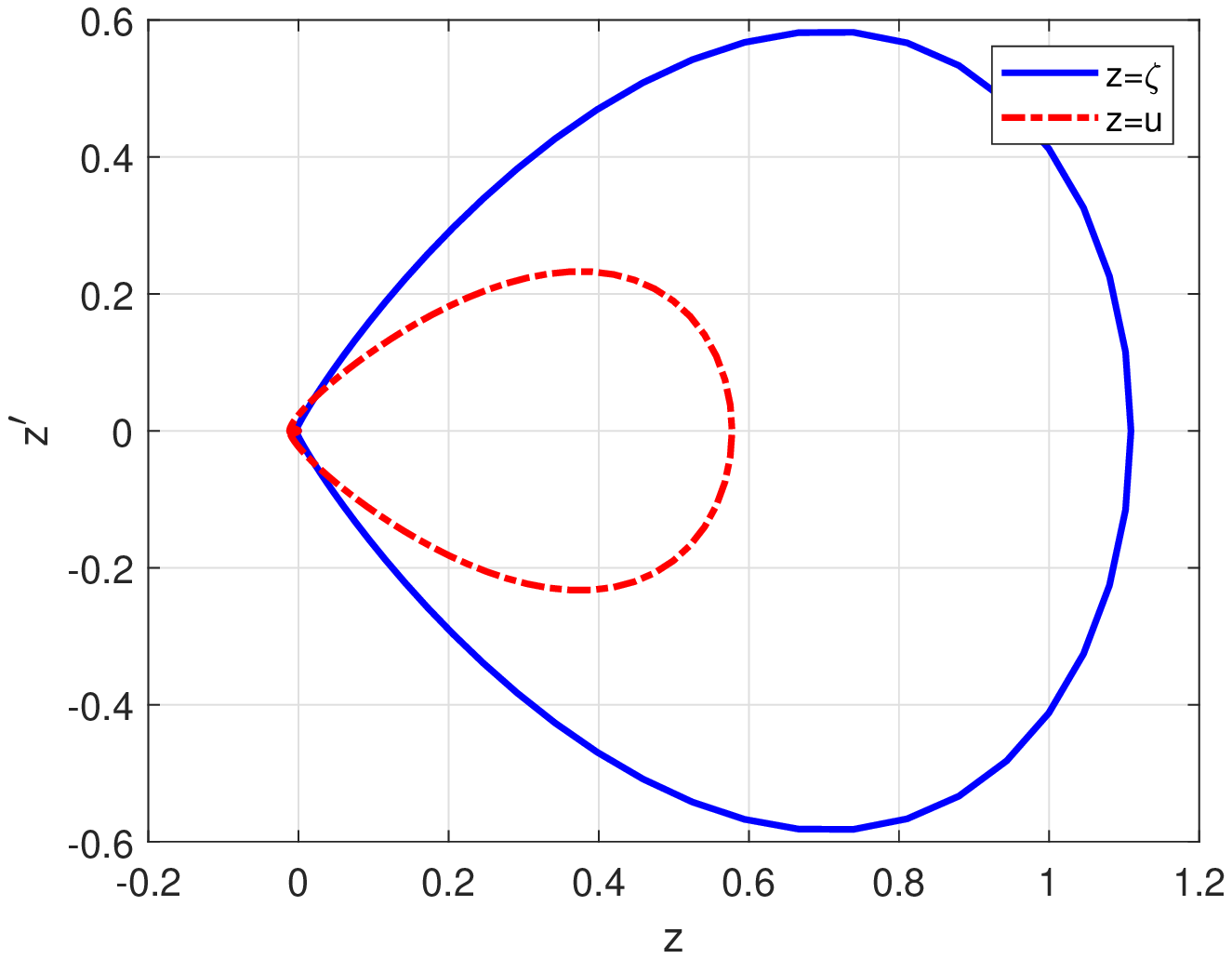}}
\subfigure[]
{\includegraphics[width=6.2cm]{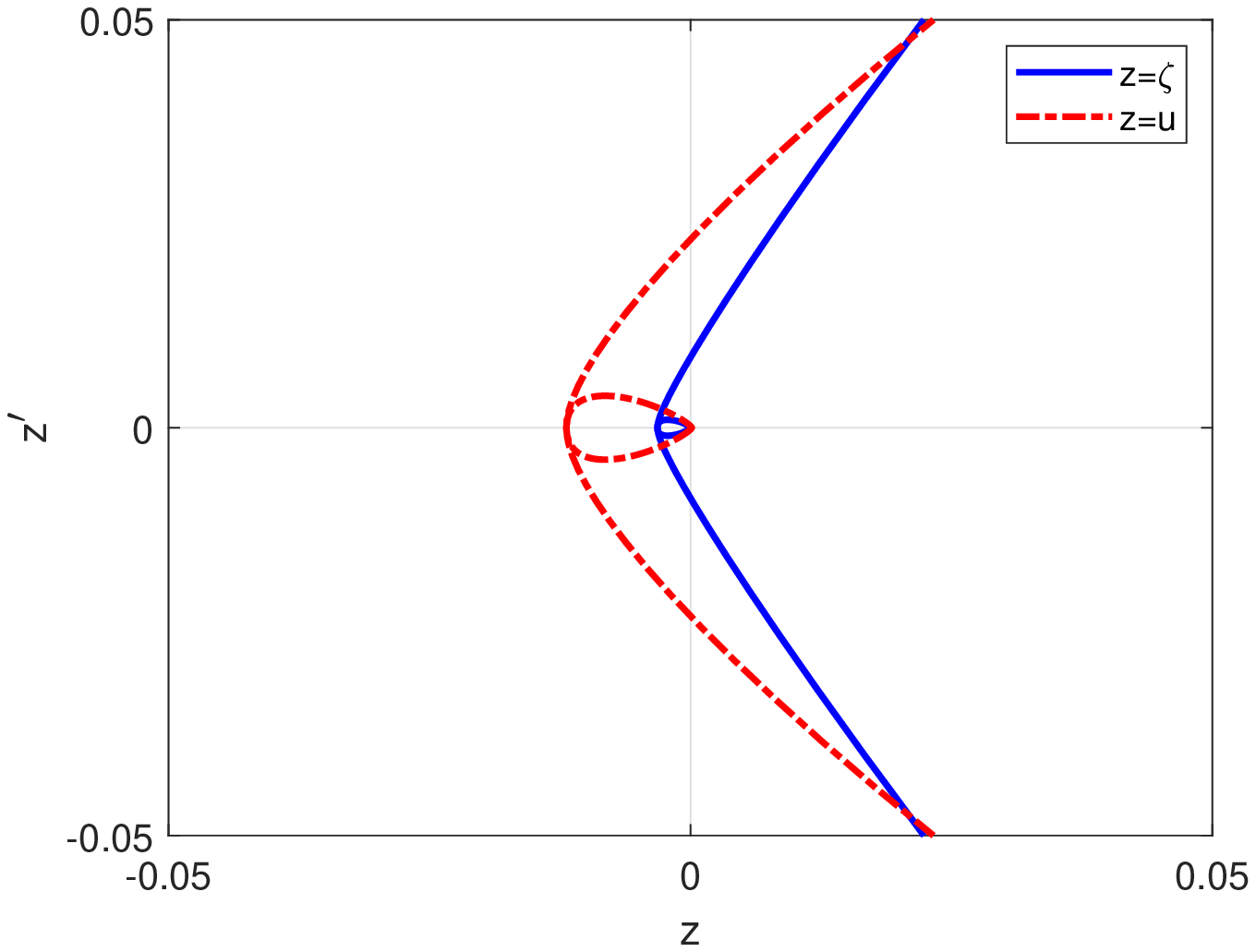}}
\caption{Approximate solitary wave profiles for the case $\epsilon_{d-b}=0$, $a,b,c,d$ given by (\ref{321}), $\gamma=0.8, c_{s}=5\times 10^{-2}$. (a) $\xi$ and $u$ profiles; (b) $\zeta$ and $u$ phase portraits; (c) magnification of (b).}
\label{Fig_one}
\end{figure}
Figure \ref{Fig_one}(a) shows the numerical profile computed by running (\ref{BFD4b}) with $L=256, N=4096,\gamma=0.8$ (thus $\omega_{m}=6.6667\times 10^{-2}$), $c_{s}=5\times 10^{-2}$, and an initial iteration given by a profile of ${\rm sech}^{2}$-type, are shown in Figure \ref{Fig_one}. The method attains an approximate solitary-wave profile with a residual error below $10^{-12}$ in $35$ iterations. The phase portraits of the computed profiles show that the exponential decay of the waves is not monotone.
\begin{remark}
Let $\gamma\in (0,1), b=d>0, a,c<0$. We observe that the last condition in (\ref{cond1}) is equivalent to requiring 
\begin{equation*}\label{cond3}
Q(|c_{s}|)>0,
\end{equation*}
where $Q(x)=Q_{0}+Q_{1}x+x^{2}$ with
\begin{eqnarray}
Q_{0}(\gamma)&=&\frac{1}{b\gamma^{2}}\left(\frac{1}{\gamma}\left(\sqrt{\frac{\mu}{\mu_{2}}}-\gamma\right)\left(a-\frac{1}{\gamma^{2}}\right)-\frac{1}{4\gamma^{2}}\right),\label{q0}\\
Q_{1}(\gamma)&=&\frac{1}{b\gamma^{2}}\left(b\left(\sqrt{\frac{\mu}{\mu_{2}}}-\gamma\right)+\gamma\left(a-\frac{1}{\gamma^{2}}\right)\right).\nonumber
\end{eqnarray}
It can be seen that, under the conditions on the parameters given above, we have $Q_{1}<0$. Let $x_{\pm}(\gamma)$ be the roots of $Q(x)$, 
\begin{eqnarray*}\label{roots}
x_{\pm}(\gamma)=\frac{1}{2}\left(-Q_{1}(\gamma)\pm\sqrt{Q_{1}(\gamma)^{2}-4Q_{0}(\gamma)}\right).
\end{eqnarray*} 
Then $x_{-}(\gamma)$ is real and positive (with $0<x_{-}(\gamma)<x_{+}(\gamma)$) if and only if $Q_{0}(\gamma)>0$, that is
\begin{eqnarray}\label{311}
\sqrt{\frac{\mu}{\mu_{2}}}<C(\gamma):=\gamma-\frac{1}{4\gamma}\frac{1}{\frac{1}{\gamma^{2}}-a}=\frac{\gamma(3-4a\gamma^{2})}{4(1-a\gamma^{2})}.
\end{eqnarray}
Therefore, (\ref{cond1}) can be rewritten as
\begin{eqnarray*}
|c_{s}|<\min\{\omega_{m},x_{-}(\gamma)\},
\end{eqnarray*}
for values of $\mu/\mu_{2}$ satisfying (\ref{311}).

Note also that (\ref{311}) can be read in a two-fold way: a condition on $\sqrt{\mu/\mu_{2}}$ for fixed $\gamma\in (0,1)$ or a condition on $\gamma$ for fixed $\sqrt{\mu/\mu_{2}}$. In this last sense, we have the following result:
\begin{lemma}\label{lemm31}
If
\begin{eqnarray}\label{312}
\sqrt{\frac{\mu}{\mu_{2}}}<\frac{3+|a|}{4+|a|}.
\end{eqnarray}
Then there exists $\gamma_{*}\in (0,1)$ such that (\ref{311}) holds for $\gamma\in (\gamma_{*},1)$. The value of $\gamma_{*}$ satisfies $Q_{0}(\gamma_{*})=0$, where $Q_{0}$ is given by (\ref{q0}), that is
\begin{eqnarray}\label{313}
\gamma_{*}^{3}-\sqrt{\frac{\mu}{\mu_{2}}}\gamma_{*}^{2}+\frac{3}{4|a|}\gamma_{*}-\frac{1}{|a|}\sqrt{\frac{\mu}{\mu_{2}}}=0.
\end{eqnarray}
\end{lemma}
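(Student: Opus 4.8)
The plan is to bypass the cubic (\ref{313}) at first and analyze instead the function $C(\gamma)$ occurring in (\ref{311}). Put $s=\sqrt{\mu/\mu_{2}}$, and note $s>0$ since $\mu=d_{1}/\lambda^{2}>0$ and $\mu_{2}=d_{2}/\lambda^{2}>0$. By the Remark immediately preceding the statement, for $\gamma\in(0,1)$ with $b=d>0$ and $a=-|a|<0$, the inequality (\ref{311}) is exactly the assertion $s<C(\gamma)$, and moreover $C(\gamma_{*})=s$ is the boundary case $Q_{0}(\gamma_{*})=0$ of that inequality. After simplification,
\begin{equation*}
C(\gamma)=\frac{\gamma(3-4a\gamma^{2})}{4(1-a\gamma^{2})}=\frac{\gamma(3+4|a|\gamma^{2})}{4(1+|a|\gamma^{2})},
\end{equation*}
so it suffices to control $C$ on $[0,1]$.

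First I would check that $C$ is well defined, continuous and strictly increasing on $[0,1]$. Writing $C=u/v$ with $u(\gamma)=3\gamma+4|a|\gamma^{3}$ and $v(\gamma)=4+4|a|\gamma^{2}>0$, a direct computation gives
\begin{equation*}
u'v-uv'=12+36|a|\gamma^{2}+16|a|^{2}\gamma^{4}>0,
\end{equation*}
whence $C'>0$ on $[0,1]$. Consequently $C$ maps $[0,1]$ increasingly and bijectively onto $[C(0),C(1)]=\bigl[0,\tfrac{3+4|a|}{4+4|a|}\bigr]$.

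Next I would compare the hypothesis threshold with $C(1)$: cross-multiplying the positive quantities, $(3+|a|)(4+4|a|)-(3+4|a|)(4+|a|)=-3|a|<0$, hence $\tfrac{3+|a|}{4+|a|}<\tfrac{3+4|a|}{4+4|a|}=C(1)$. Therefore assumption (\ref{312}) yields $0<s<C(1)$, and by the intermediate value theorem together with the strict monotonicity of $C$ there is a unique $\gamma_{*}\in(0,1)$ with $C(\gamma_{*})=s$, while $C(\gamma)>s$ for all $\gamma\in(\gamma_{*},1)$; by the equivalence recalled above this is precisely (\ref{311}) on $(\gamma_{*},1)$.

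Finally it remains to identify the equation satisfied by $\gamma_{*}$. Clearing denominators in $C(\gamma_{*})=s$ gives $4s\bigl(1+|a|\gamma_{*}^{2}\bigr)=3\gamma_{*}+4|a|\gamma_{*}^{3}$, i.e. $4|a|\gamma_{*}^{3}-4|a|s\gamma_{*}^{2}+3\gamma_{*}-4s=0$, and division by $4|a|$ produces (\ref{313}); equivalently this is $Q_{0}(\gamma_{*})=0$ with $Q_{0}$ as in (\ref{q0}). I do not expect any genuine obstacle: the only delicate points are the algebraic reduction of $C$ to $\gamma(3+4|a|\gamma^{2})/(4(1+|a|\gamma^{2}))$ and the positivity of $u'v-uv'$ (which forces monotonicity of $C$), together with the arithmetic comparison $\tfrac{3+|a|}{4+|a|}<C(1)$, which simply records that the stated sufficient condition is strictly stronger than the sharp one $s<C(1)$.
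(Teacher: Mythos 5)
Your proof is correct, and it takes a route that differs from the paper's in a way that is worth recording. The paper works directly with the cubic $P(\gamma)=\gamma^{3}-s\gamma^{2}+\tfrac{3}{4|a|}\gamma-\tfrac{s}{|a|}$ (where $s=\sqrt{\mu/\mu_{2}}$): it checks $P(0)<0$, uses (\ref{312}) to get $P(1)>0$, and then claims that (\ref{312}) forces $P$ to be increasing on $[0,1]$, from which uniqueness of $\gamma_{*}$ and positivity of $P$ on $(\gamma_{*},1)$ follow. You instead prove unconditional strict monotonicity of the rational function $C(\gamma)=\gamma(3+4|a|\gamma^{2})/(4(1+|a|\gamma^{2}))$ via the quotient rule, compare the threshold in (\ref{312}) with $C(1)=\tfrac{3+4|a|}{4+4|a|}$, and apply the intermediate value theorem to $C$. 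The two arguments are linked by the identity $4|a|P(\gamma)=4\bigl(1+|a|\gamma^{2}\bigr)\bigl(C(\gamma)-s\bigr)$, so they address the same inequality; but your version buys something real. The paper's claim that $P$ is increasing on all of $[0,1]$ under (\ref{312}) is actually false for large $|a|$: the minimum of $P'$ is $-s^{2}/3+3/(4|a|)$, attained at $\gamma=s/3\in(0,1)$, and this is negative whenever $s>3/(2\sqrt{|a|})$, which is compatible with (\ref{312}) once $|a|>9/4$ (e.g.\ $|a|=4$, $s=0.8$). The conclusion of the lemma survives — by your identity $P$ has the same sign as $C-s$, hence a unique real root with $P>0$ to its right — but the justification via monotonicity of $C$ rather than of $P$ is the one that actually works in all cases, so your argument closes a small gap in the paper's. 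One cosmetic remark: you might state explicitly that the hypotheses in force are those of the surrounding Remark ($a<0$, so $|a|>0$ and the division by $4|a|$ in the last step is legitimate), since the lemma as printed does not restate them.
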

\begin{proof}
In terms of $\gamma$ and for $\sqrt{\mu/\mu_{2}}$, condition (\ref{311}) reads $4|a|P(\gamma)>0$, where $P$ is the polynomial
$$P(\gamma)=\gamma^{3}-\sqrt{\frac{\mu}{\mu_{2}}}\gamma^{2}+\frac{3}{4|a|}\gamma-\frac{1}{|a|}\sqrt{\frac{\mu}{\mu_{2}}},
$$
which can be defined for $\gamma\in [0,1]$. Note that $P(0)<0$ and, using (\ref{312}), $P(1)>0$. Then there exists $\gamma_{*}\in (0,1)$ such that $P(\gamma_{*})=0$. Furthermore, it is not hard to see that if (\ref{312}) holds, then $P(\gamma)$ is increasing for $\gamma\in [0,1]$, which implies that $\gamma_{*}$ is unique and that $P(\gamma)>0$ for $\gamma\in (\gamma_{*},1)$, and the lemma follows.
\end{proof}
Observe that $\gamma_{*}$ can be estimated by solving (\ref{313}) with some iterative procedure.
\end{remark}

It was observed that the method (\ref{BFD4b}) still generates approximate solitary wave profiles for speeds beyond (\ref{cond1}), (\ref{cond2}). The experiments suggest the existence of solitary wave solutions with $|c_{s}|<c_{\gamma}$ for some $c_{\gamma}$ specified in Appendix \ref{appendixA}.
%
\begin{figure}[htbp]
\centering
\subfigure[]
{\includegraphics[width=6.2cm]{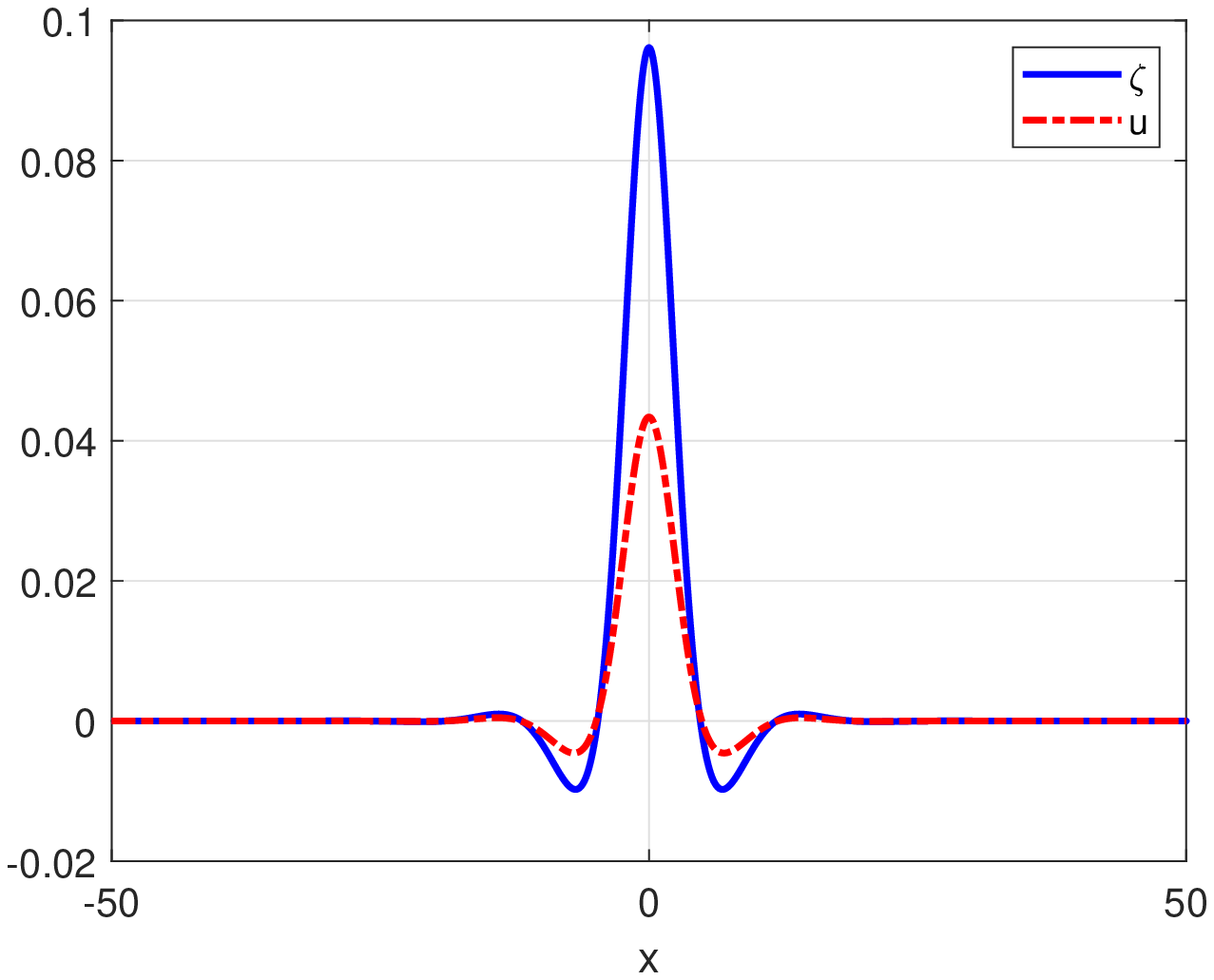}}
\subfigure[]
{\includegraphics[width=6.2cm]{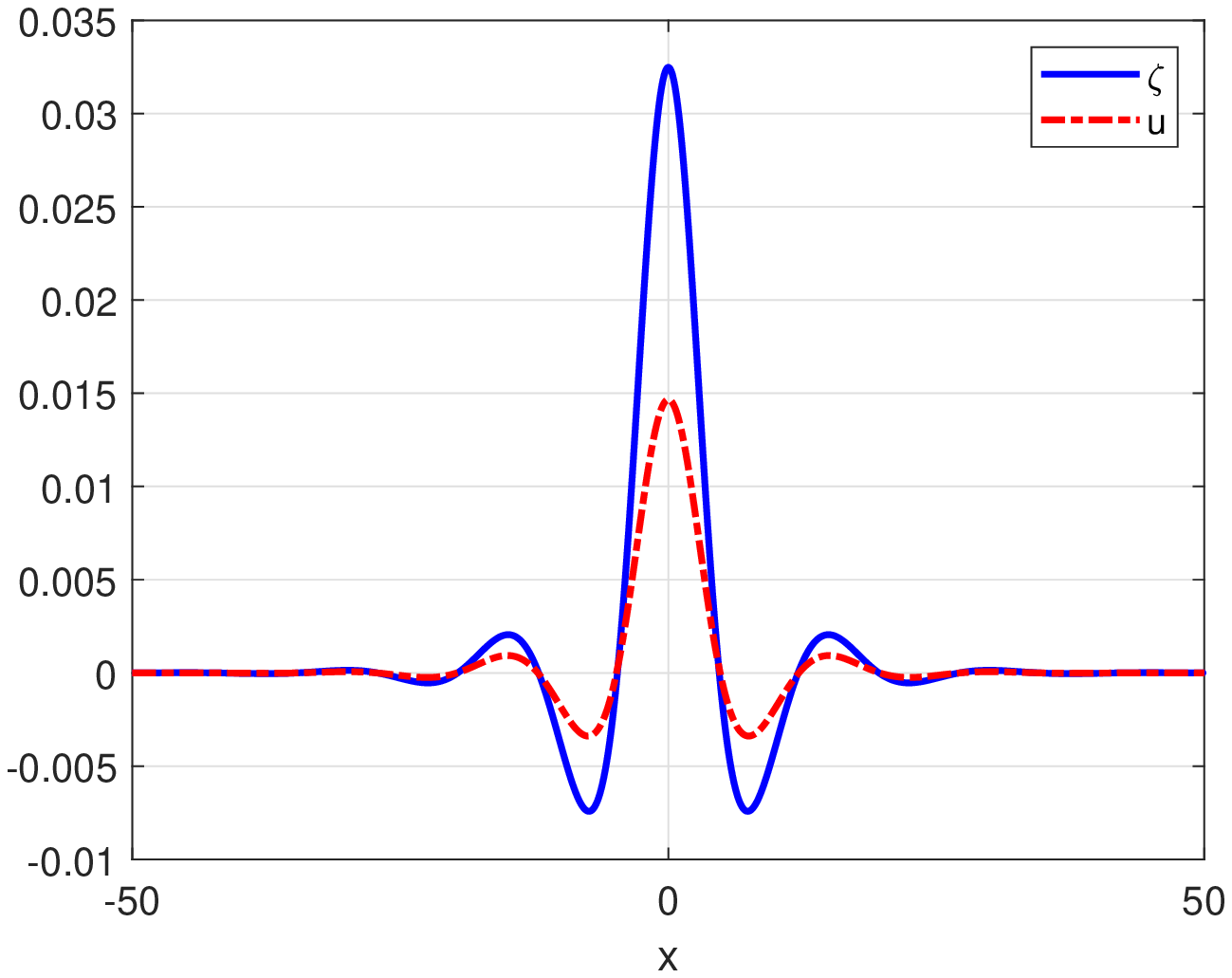}}
\caption{Approximate solitary wave profiles for the case $\epsilon_{d-b}=0$, $a,b,c,d$ given by (\ref{321}), $\gamma=0.8$. (a) $c_{s}=4\times 10^{-1}$; (b) $c_{s}=4.2\times 10^{-1}$.}
\label{Fig_two}
\end{figure}
This is here illustrated in Figure \ref{Fig_two}, which shows two of these computed profiles for the data used in Figure \ref{Fig_one}, for which the bound $c_{\gamma}$ is approximately $4.2646\times 10^{-1}$ (cf. Appendix \ref{appendixA}). It is also noted that as the speed approaches the limit $c_{\gamma}$, the computed profiles reduce their amplitudes (suggesting that this parameter is an increasing function of $c_{\gamma}-c_{s}$) and increase the nonmonotone behaviour, generating more oscillations. 

The numerical experiments also suggest that the nonmonotone decay also depends on the size of $\nu=\mu/\mu_{2}$ and $\gamma$, in the sense that the smaller these parameters, the more the computed wave oscillates in its decay at infinity. Figure \ref{Fig_three_a} illustrates this fact, with computed solitary waves corresponding to $\nu=1,1/4,1/8$ with $\gamma=0.8$ fixed and $c_{s}=c_{\gamma}-10^{-2}$ for the three cases. (The rest of the parameters is the same as in Figure \ref{Fig_one}.)
\begin{figure}[htbp]
\centering
\subfigure[]
{\includegraphics[width=4cm]{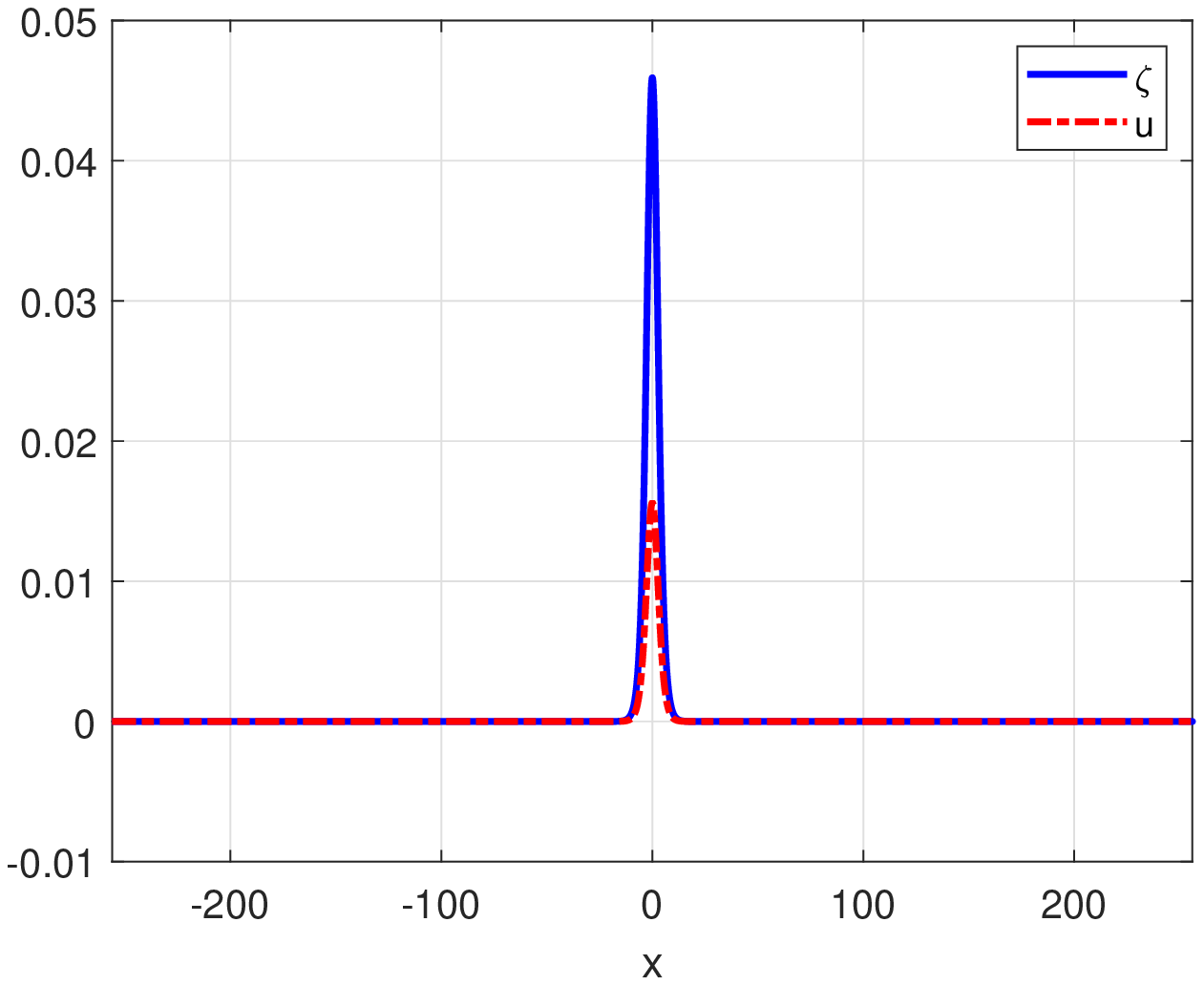}}
\subfigure[]
{\includegraphics[width=4cm]{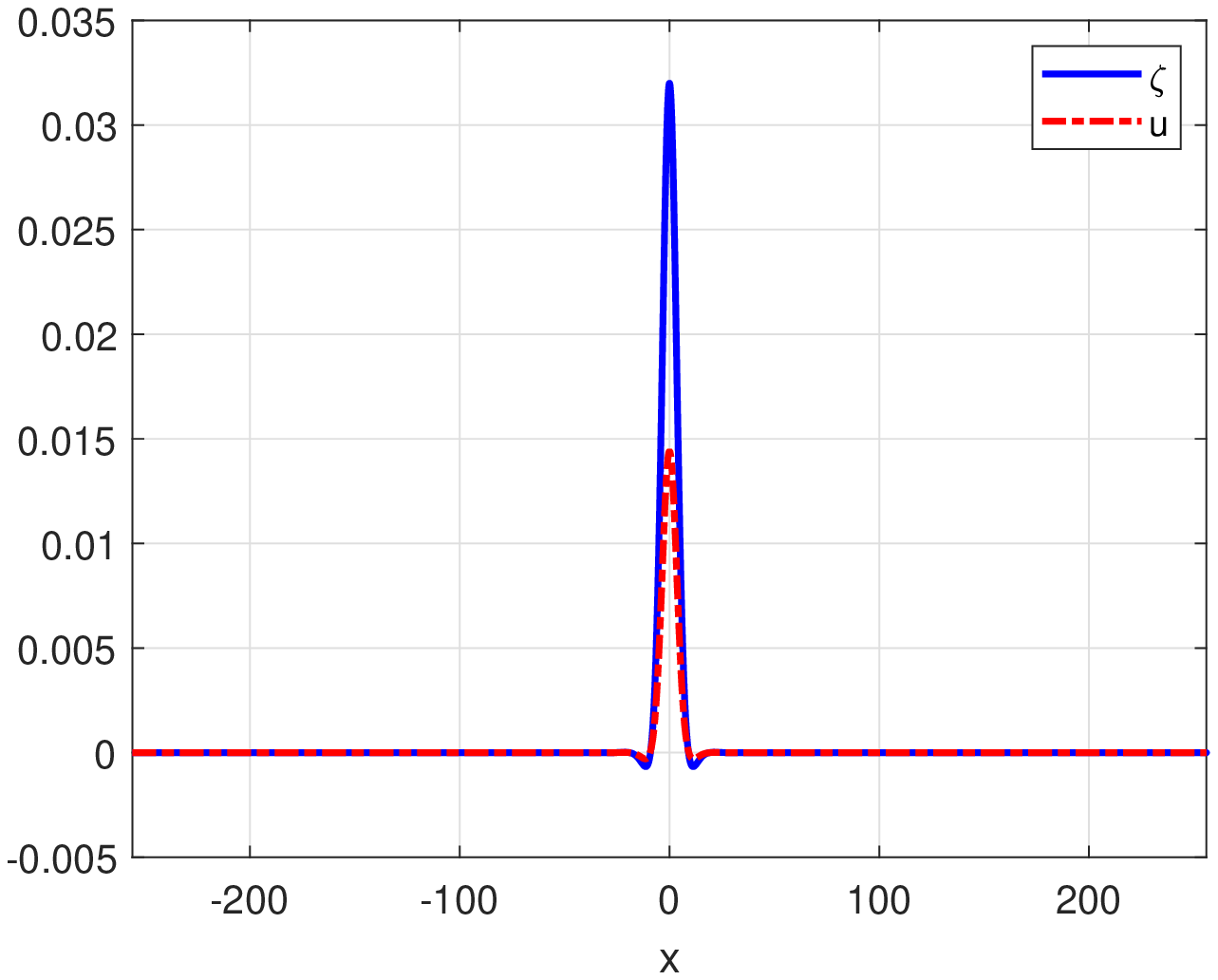}}
\subfigure[]
{\includegraphics[width=4cm]{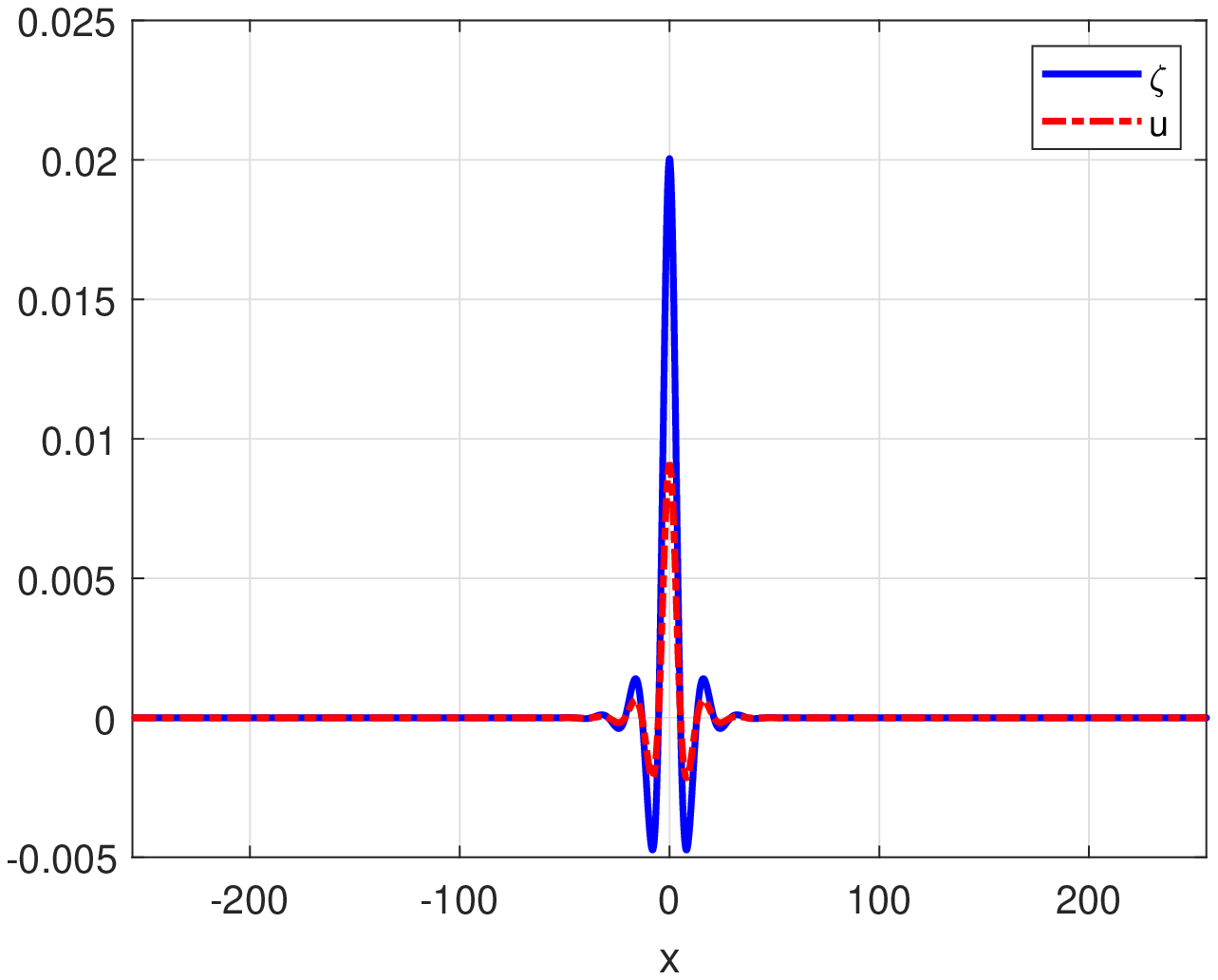}}
\caption{Approximate solitary wave profiles for the case $\epsilon_{d-b}=0$, $a,b,c,d$ given by (\ref{321}), $\gamma=0.8$, and $c_{s}=R(\gamma)-10^{-2}$. (a) $\nu=\mu/\mu_{2}=1$; (b) $\nu=\mu/\mu_{2}=1/4$; (c) $\nu=\mu/\mu_{2}=1/8$.}
\label{Fig_three_a}
\end{figure}
In a similar way, the influence of $\gamma$ is shown in Figure \ref{Fig_three_b}, where the approximate solitary waves are generated with fixed $\nu=1/10$ and three values $\gamma=0.3, 0.6, 0.8$ respectively.
\begin{figure}[htbp]
\centering
\subfigure[]
{\includegraphics[width=4.2cm]{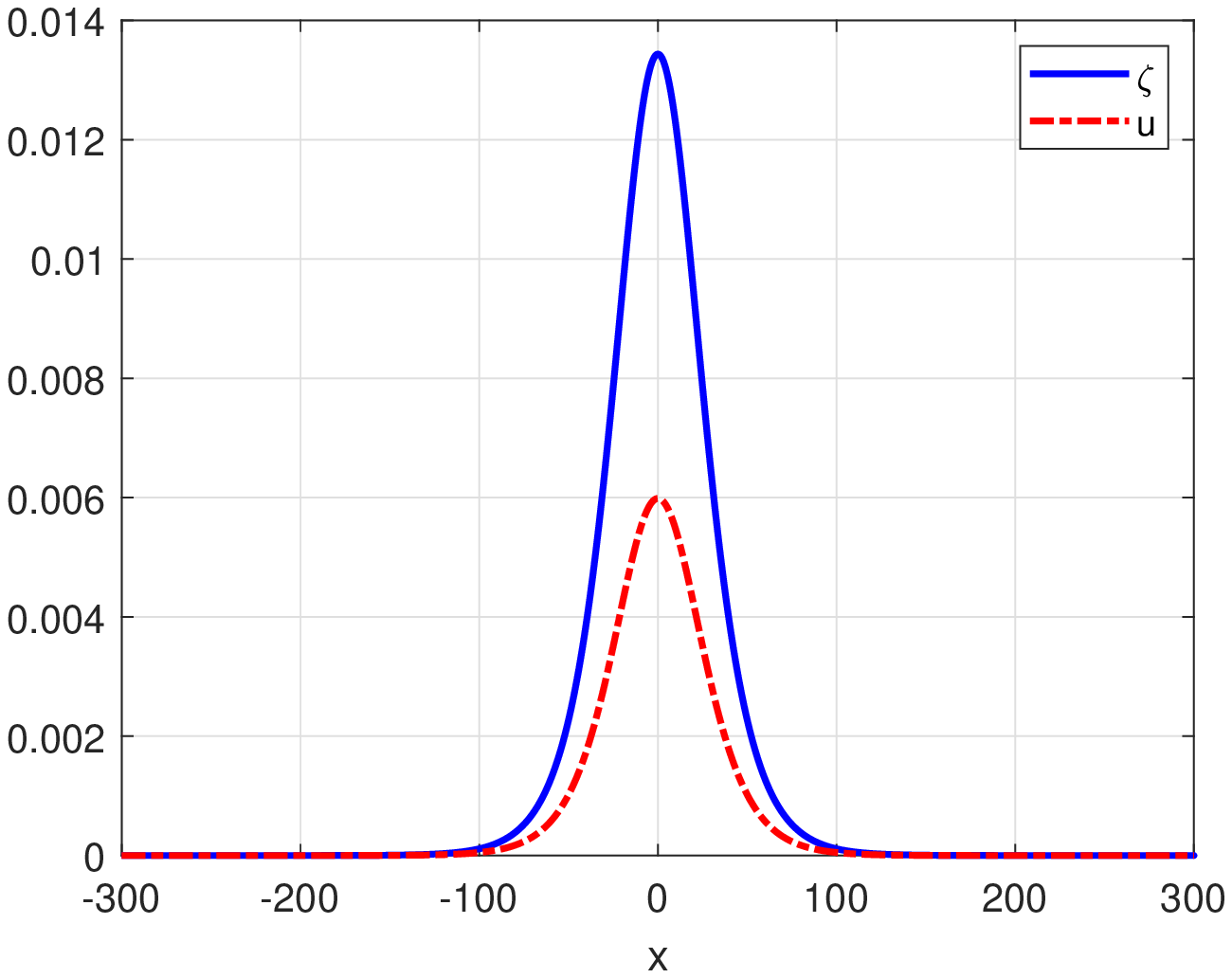}}
\subfigure[]
{\includegraphics[width=4cm]{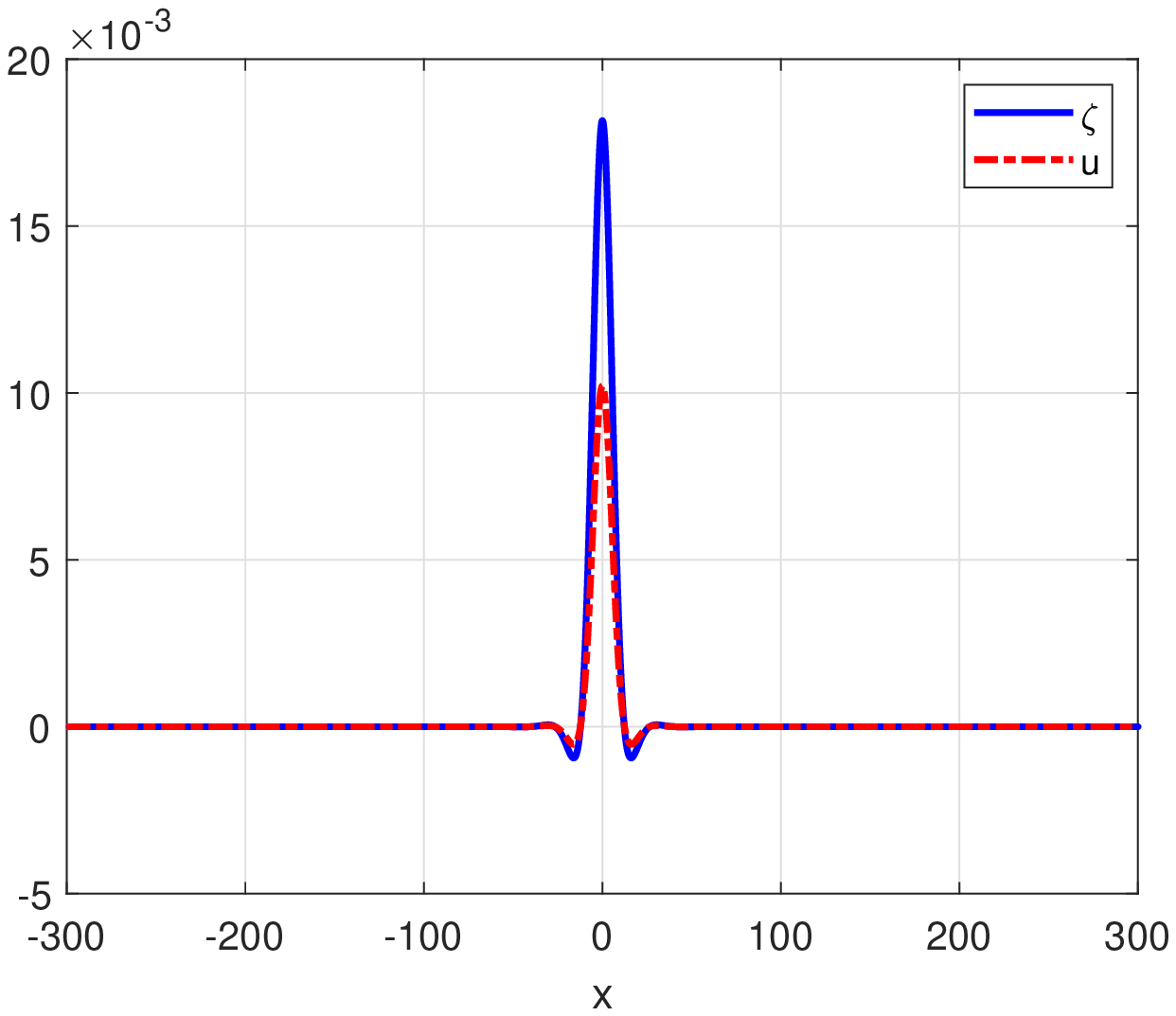}}
\subfigure[]
{\includegraphics[width=4cm]{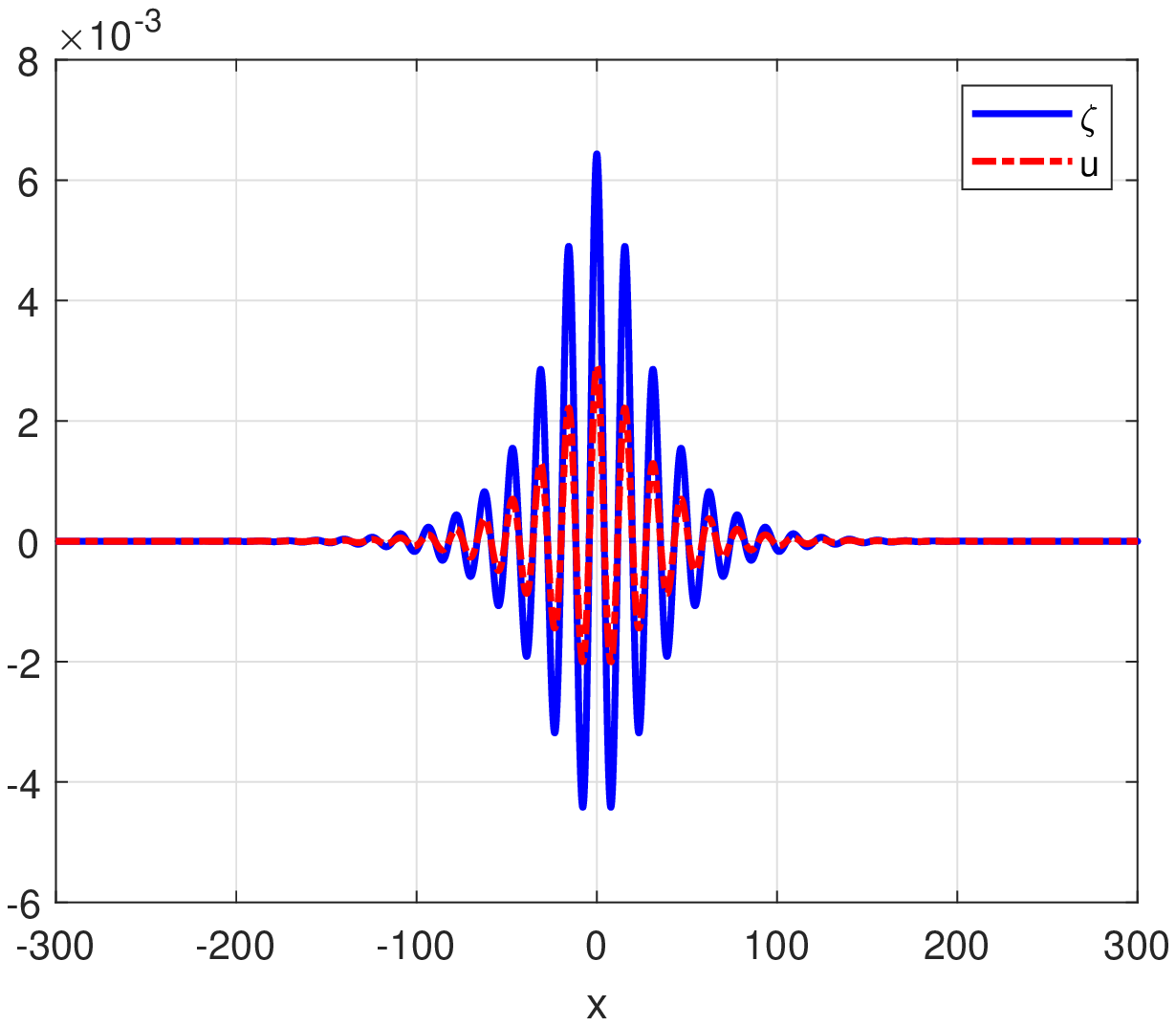}}
\caption{Approximate solitary wave profiles for the case $\epsilon_{d-b}=0$, $a,b,c,d$ given by (\ref{321}), $\nu=\mu/\mu_{2}=0.1$, and $c_{s}=R(\gamma)-10^{-2}$. (a) $\gamma=0.3$; (b) $\gamma=0.6$; (c) $\gamma=0.8$.}
\label{Fig_three_b}
\end{figure}
The numerical experiments shown above are all concerned with the \lq generic\rq\ B/FD case, that is, $b,d>0, a,c<0$. Similar results can be obtained for the two other cases of B/FD systems considered in section \ref{sec2}, but they will not be shown here.
\subsubsection{Nonhamiltonian case}
The method (\ref{BFD4b}) also suggests the existence of solitary wave solutions of (\ref{BFD}) in the nonhamiltonian case (represented, in this experimental section, by nonzero values of $\epsilon_{d-b}$ in (\ref{321})). Focused again on the \lq generic\rq\ B/FD systems, this case is illustrated in Figure \ref{Fig_four} for two values of $\epsilon_{d-b}$.
\begin{figure}[htbp]
\centering
\subfigure[]
{\includegraphics[width=6cm]{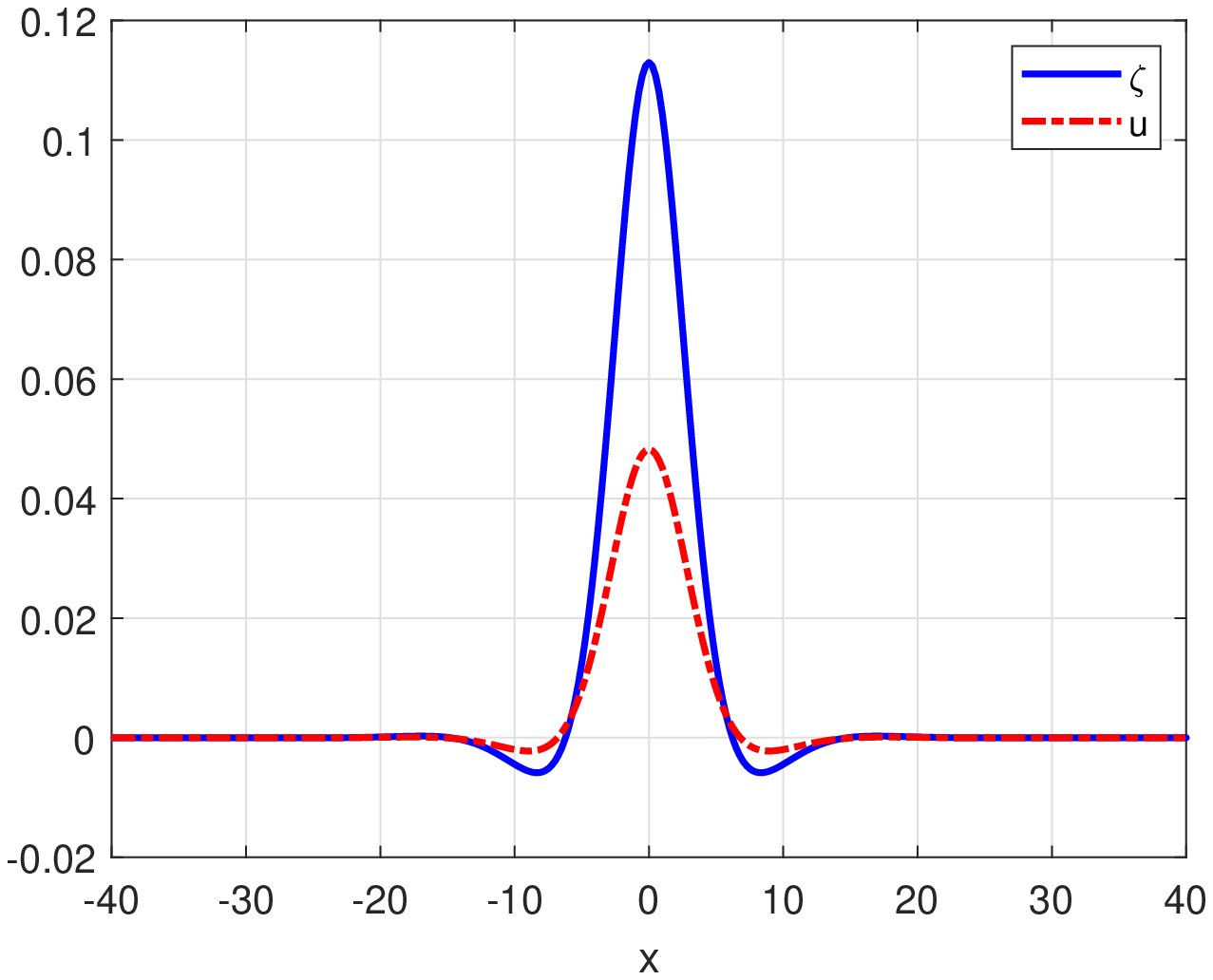}}
\subfigure[]
{\includegraphics[width=6.3cm]{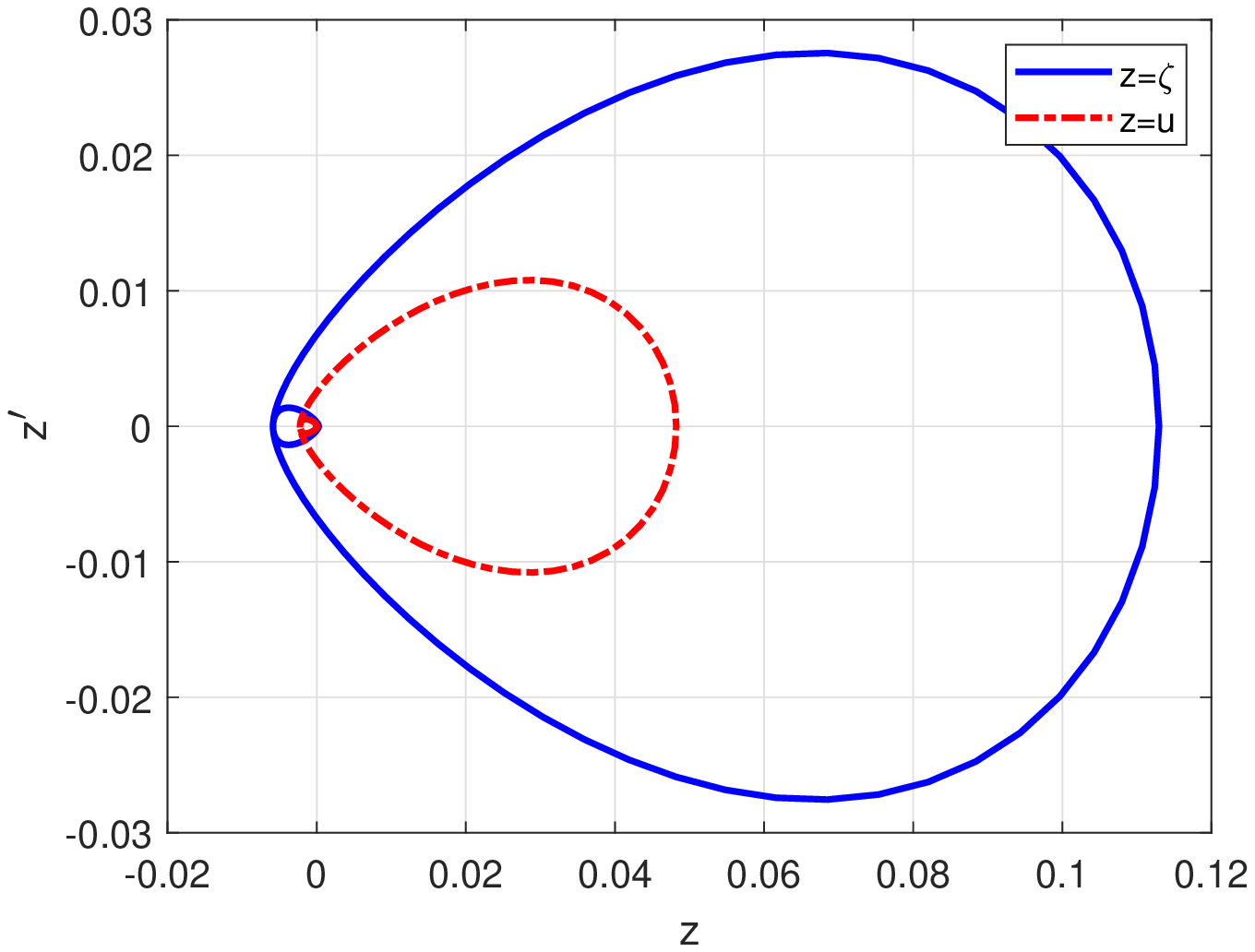}}
\subfigure[]
{\includegraphics[width=6cm]{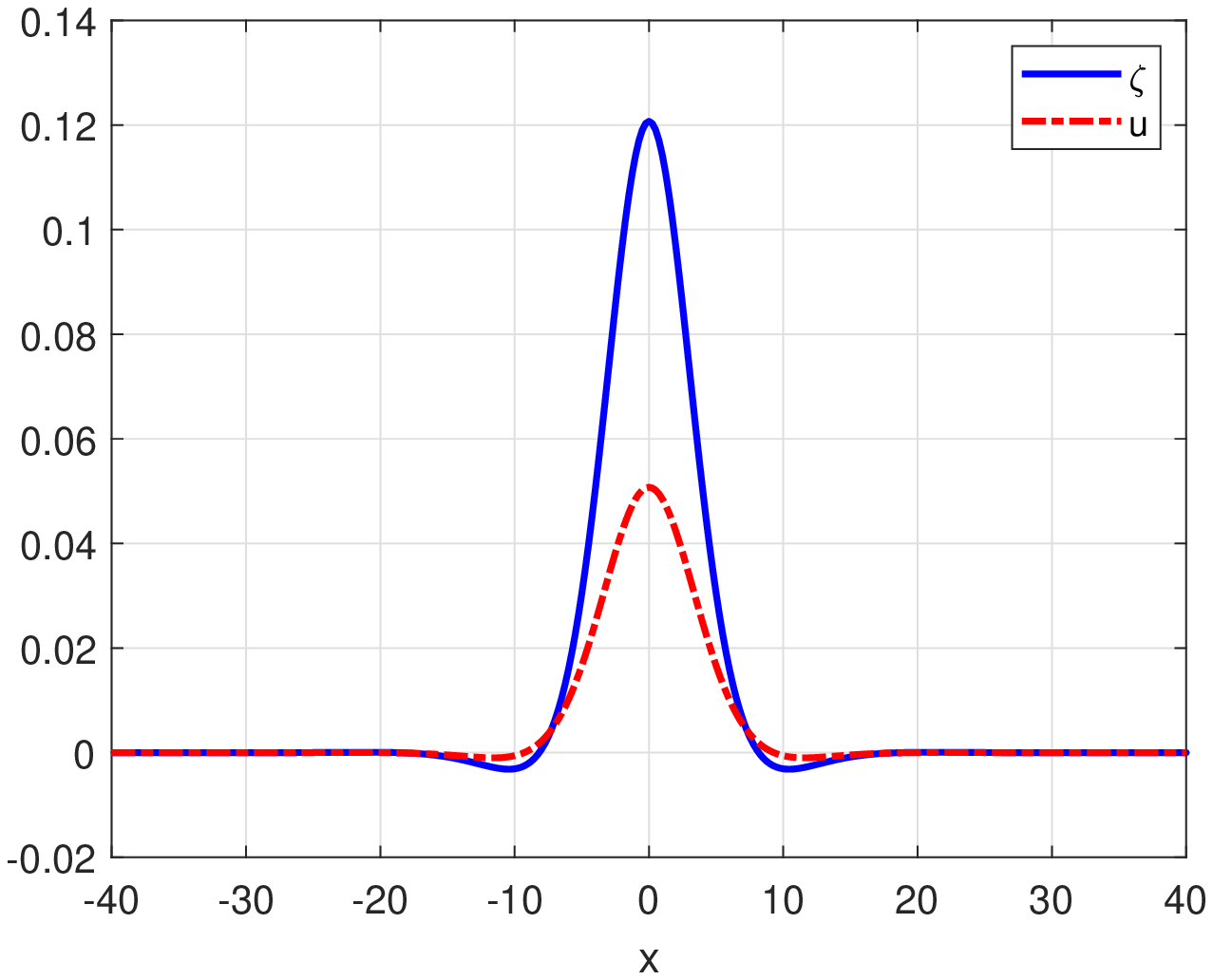}}
\subfigure[]
{\includegraphics[width=6.3cm]{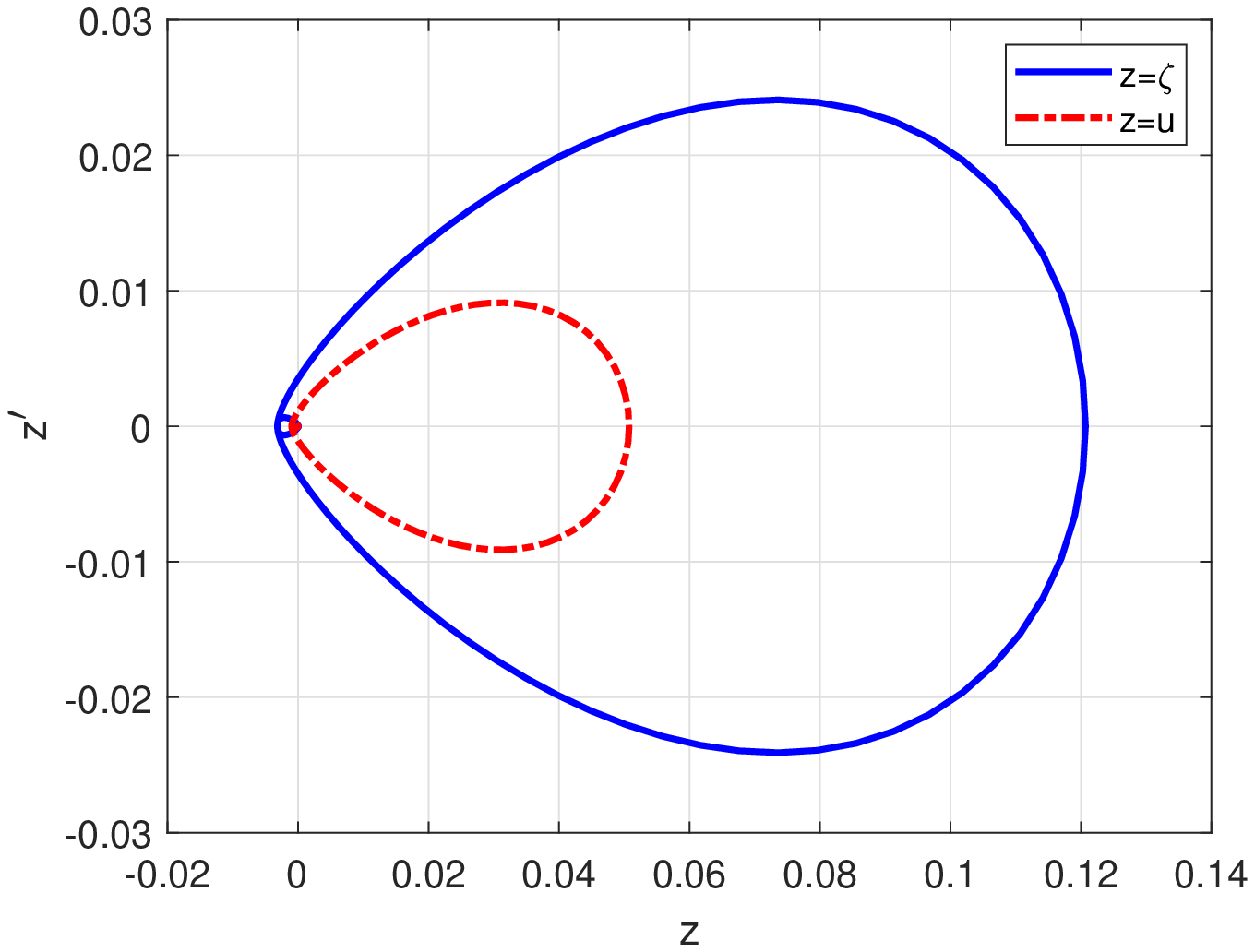}}
\caption{Approximate solitary wave profiles and corresponding phase portraits for $a,b,c,d$ given by (\ref{321}), $\gamma=0.8$, $c_{s}=4\times 10^{-1}$, $\epsilon=\mu=1, \mu_{2}=10$, and (a), (b) $\epsilon_{d-b}=1$, (c), (d) $\epsilon_{d-b}=2$.}
\label{Fig_four}
\end{figure}
As far as the properties of the solitary waves are concerned, the computations suggest that they are similar to those of the waves generated in the Hamiltonian case. The solitary waves decay exponentially in a nonmonotone way and for fixed $\gamma$, there seems to be some analogous speed limit  such that as the magnitude of the speed approaches this limit, the corresponding amplitudes decrease and the nonmonotone behaviour of the decay is stronger. Figure \ref{Fig_five} illustrates this fact. It shows the solitary waves computed with (\ref{BFD4b}) with $\epsilon=\mu=1, \mu_{2}=10$,  $a,b,c,d$ given by (\ref{321}) with $\epsilon_{d-b}=2$, $\gamma=0.8$, and three speeds $c_{s}=0.2,0.3,0.4$.
\begin{figure}[htbp]
\centering
\subfigure[]
{\includegraphics[width=6.2cm]{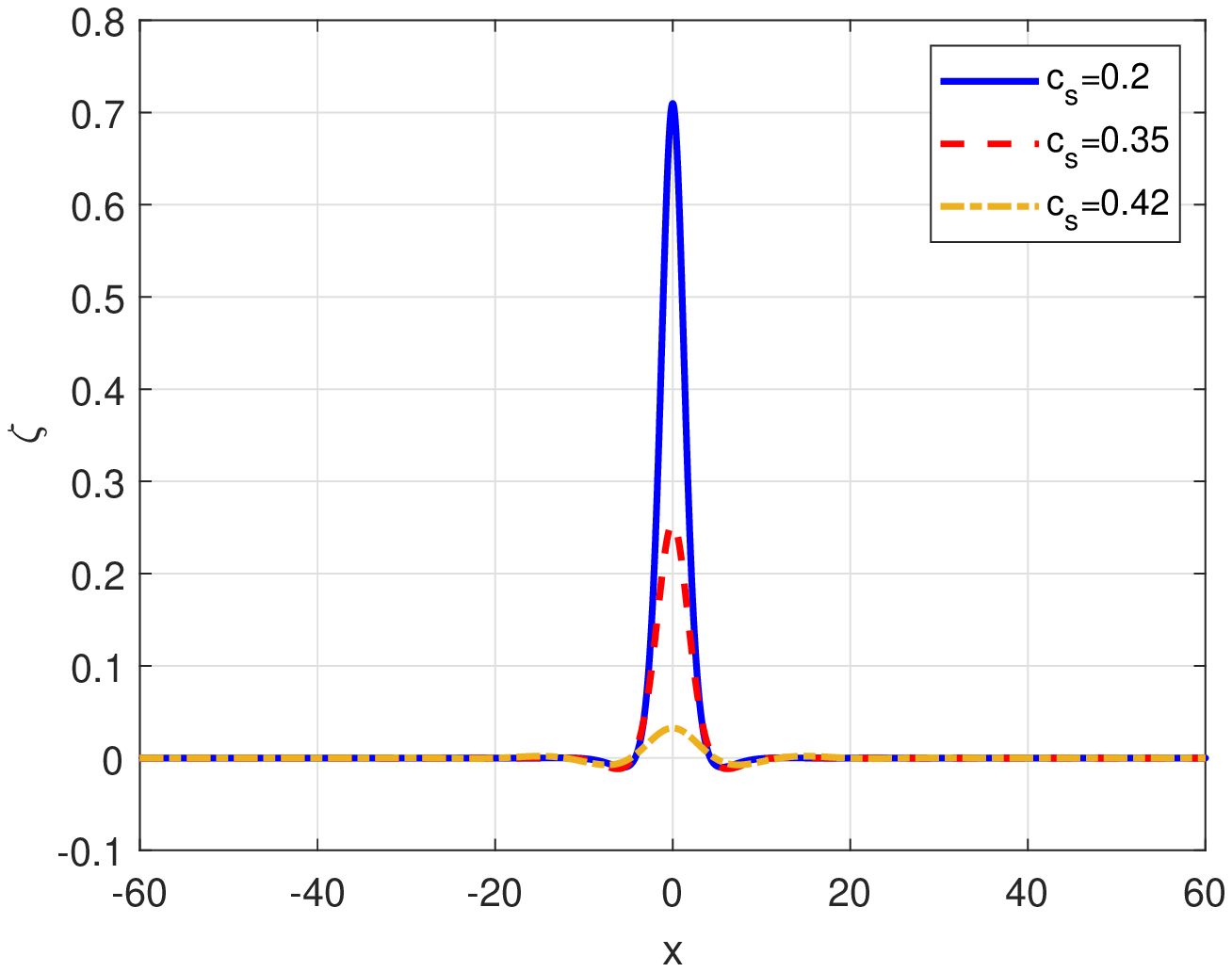}}
\subfigure[]
{\includegraphics[width=6.2cm]{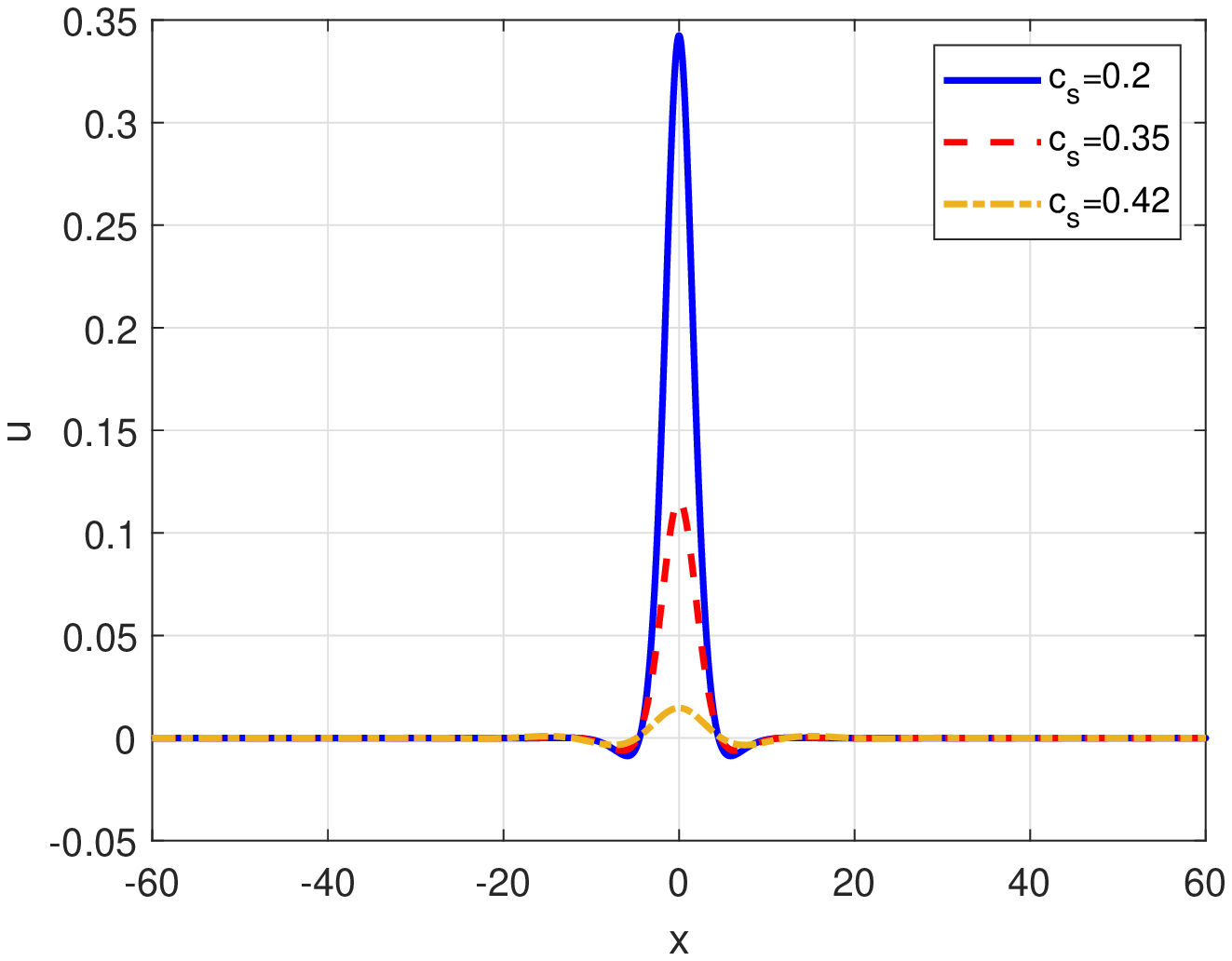}}
\caption{Approximate solitary wave profiles and corresponding phase portraits for $a,b,c,d$ given by (\ref{321}), $\gamma=0.8$, $\epsilon=\mu=1, \mu_{2}=10$, $\epsilon_{d-b}=2$, and three values of $c_{s}$. (a) $\zeta$ profile; (b) $u$ profile.}
\label{Fig_five}
\end{figure}
\section{Solitary wave solutions. Computational study of the dynamics}
\label{sec4}
In this section we study, by computational means, some aspects of the dynamics of the solitary wave solutions of (\ref{BFD}). We first introduce the numerical method used for the computations and then analyze numerically the stability of the solitary waves (computed from the technique described in section \ref{sec3}) under small and large perturbations, the interactions of solitary waves, and the resolution of general initial data into a train of solitary waves. The experiments below are concerned with the Hamiltonian case into the \lq generic\rq\ B/FD systems ($b=d>0, a,c<0$); for the nonhamiltonian case we did not observe relevant differences in the corresponding solitary wave dynamics. The other two types of B/FD systems considered in section \ref{sec2} will be studied elsewhere.
\subsection{Description of the numerical method}
In this section we introduce the numerical scheme used in the computational study and illustrate its accuracy and performance with some experiments of validation.
\subsubsection{Semidiscretization in space}
For the spatial discretization, we will take advantage of the convergence analysis developed in section \ref{sec2} and the scheme of approximation of solitary wave profiles introduced in section \ref{sec3} and discretize the periodic ivp of (\ref{BFD1}) in space with the Fourier spectral method, formulated in collocation form and based on the nodes (\ref{34b}). (The scheme is equivalent to the Fourier-Galerkin method in the sense specified in e.~g., \cite{CHQZ}.) Then, for an integer $N\geq 1$, the semidiscrete solution, defined as a map $(\zeta^{N},u^{N}):[0,\infty)\rightarrow S_{N}$ satisfying (\ref{BFD1}) at the collocation points (\ref{34b}), is represented by the nodal values $Z_{N}(t)=(\zeta^{N}(x_{j},t))_{j=0}^{N-1}, U_{N}(t)=(u^{N}(x_{j},t))_{j=0}^{N-1}$ which satisfy the semidiscrete system
\begin{eqnarray}
J_{b,h}\frac{d}{dt}Z_{N}+\mathcal{L}_{\mu_{2},h}(D_{N}U_{N})-\frac{\epsilon}{\gamma}(D_{N}(Z_{N}.U_{N}))&=&0,\nonumber\\
J_{d,h}\frac{d}{dt}U_{N}+(1-\gamma)J_{c,h}(D_{N}Z_{N})-\frac{\epsilon}{2\gamma}(D_{N}(U_{N}.U_{N}))&=&0,\label{BFD23}
\end{eqnarray}
for $0\leq j\leq N-1$, where the operators $J_{\alpha,h}, \alpha=b,d,c, \mathcal{L}_{\mu_{2},h}$, and $D_{N}$ are defined in (\ref{BFD4}). Recall that the nonlinear terms contain products that are understood in the Hadamard sense. The Fourier representation of (\ref{BFD23}) makes use of  (\ref{BFD2c}) and takes the form
\begin{eqnarray*}
j_{b}(\widetilde{k})\frac{d}{dt}\widehat{\zeta^{N}}(k,t)+(i\widetilde{k}){l}_{\mu_{2}}(\widetilde{k})\widehat{u^{N}}(k,t)-\frac{\epsilon}{\gamma}(i\widetilde{k})\widehat{(\zeta^{N}u^{N})}(k,t)&=&0,\nonumber\\
j_{d}(\widetilde{k})\frac{d}{dt}\widehat{u^{N}}(k,t)+(1-\gamma)(i\widetilde{k})j_{c}(\widetilde{k})\widehat{\zeta^{N}}(k,t)-\frac{\epsilon}{2\gamma}(i\widetilde{k})\widehat{(u^{N})^{2}}(k,t)&=&0,\label{BFD24}
\end{eqnarray*}
where $\widehat{\zeta^{N}}(k,t), \widehat{u^{N}}(k,t)$ denote, respectively, the $k$th discrete Fourier coefficient of $\zeta^{N}(t), u^{N}(t)$, and $\widetilde{k}=\pi k/L, k=0,\ldots,N-1$.

We note that when $b=d$ the ODE system (\ref{BFD23}) has a Hamiltonian structure
\begin{eqnarray*}
\partial_{t}\begin{pmatrix}
Z_{N}\\U_{N}
\end{pmatrix}=\mathcal{J}_{N}\nabla E_{h}(Z_{N},U_{N}),
\end{eqnarray*}
where $\mathcal{J}_{N}$ is the $2N\times 2N$ matrix with $N\times N$ blocks
\begin{eqnarray*}
\mathcal{J}:=-\begin{pmatrix}
0&J_{b,h}^{-1}D_{N}\\J_{b,h}^{-1}D_{N}&0
\end{pmatrix}
\end{eqnarray*}
and
\begin{eqnarray}
E_{h}(Z,U)&:=&\frac{1}{2}\left((1-\gamma)\left( Z,J_{c,h}Z\right)_{N}+\left(U,\mathcal{L}_{\mu_{2},h}U\right)_{N}\right)\nonumber\\
&&-\frac{\epsilon}{2\gamma}\left(G_{h}(Z,U),1\right)_{N},\label{BFDdEnergy}\\
G_{h}(Z,U)&:=&Z.U.^{2},\nonumber
\end{eqnarray}
where the products in $G_{h}$ are componentwise. In particular, $E_{h}$ is preserved in time by the solutions of (\ref{BFD23}). The proof of this fact follows from the symmetry of the operators $J_{c,h}$ and $\mathcal{L}_{\mu_{2},h}$, and uses similar arguments to those of \cite{Cano2006}. Note that $hE_{h}$ is the natural discretization, given by the spectral approximation, of the Hamiltonian (\ref{BFDEnergy}). As far as the second invariant (\ref{BFDMom}) is concerned, a natural discretization is $hI_{h}$ where
\begin{equation}\label{BFDdMom}
I_{h}(Z,U)=\langle Z,J_{b,h}U\rangle_{N}.
\end{equation}
The time preservation holds in this case for symmetric solutions, in the sense given by the following result, cf. \cite{Cano2006}.
\begin{theorem}
\label{th41}
Assume that $N$ is even. Let $D$ be the $N$-by-$N$ matrix such that if $U=(U_{1},U_{2},\ldots,U_{N})^{T}$ then 
\begin{equation}\label{Dop}
DU=(U_{N},U_{N-1},\ldots,U_{2},U_{1})^{T},
\end{equation} and let $(Z_{N},U_{N})$ be a solution of (\ref{BFD23}) satisfying
\begin{equation}\label{BFD46}
DZ_{N}(t)=Z{N}(t),\quad DU_{N}(t)=U_{N}(t),\quad t\geq 0.
\end{equation}
Then
\begin{equation}\label{BFD47}
I_{h}(Z_{N}(t),U_{N}(t))=I_{h}(Z_{N}(0),U_{N}(0)),\quad t\geq 0.
\end{equation}
\end{theorem}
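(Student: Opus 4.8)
The plan is to differentiate $t\mapsto I_{h}(Z_{N}(t),U_{N}(t))$ along the semidiscrete flow (\ref{BFD23}) and to check that, for a solution subject to the reversal symmetry (\ref{BFD46}), the derivative vanishes identically; this follows the pattern of the discrete-conservation arguments of \cite{Cano2006}. Recall that $b=d$ here, so $J_{b,h}=J_{d,h}$. The algebraic facts I would use are: $D_{N}$ is real and skew-symmetric; the operators $J_{b,h}=J_{d,h}$, $J_{c,h}$ and $\mathcal{L}_{\mu_{2},h}$ of (\ref{BFD4}) are symmetric for $\langle\cdot,\cdot\rangle_{N}$ and commute with $D_{N}$ and with one another (all being diagonalized by the discrete Fourier transform); and the reversal matrix $D$ of (\ref{Dop}) is orthogonal, obeys $D(a\cdot b)=(Da)\cdot(Db)$ for the Hadamard product, and anticommutes with $D_{N}$ (it realizes on the grid a spatial reflection, which anticommutes with differentiation; equivalently, $D_{N}$ is circulant with an odd symbol). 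In particular, $\langle a,b\rangle_{N}=0$ whenever $Da=a$ and $Db=-b$.

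Differentiating $I_{h}=\langle Z_{N},J_{b,h}U_{N}\rangle_{N}$ and using the symmetry of $J_{b,h}$ gives $\tfrac{d}{dt}I_{h}=\langle J_{b,h}\dot Z_{N},U_{N}\rangle_{N}+\langle Z_{N},J_{b,h}\dot U_{N}\rangle_{N}$; substituting $J_{b,h}\dot Z_{N}$ from the first equation of (\ref{BFD23}) and $J_{b,h}\dot U_{N}=J_{d,h}\dot U_{N}$ from the second yields
\begin{align*}
\frac{d}{dt}I_{h}=&-\langle\mathcal{L}_{\mu_{2},h}D_{N}U_{N},U_{N}\rangle_{N}-(1-\gamma)\langle Z_{N},J_{c,h}D_{N}Z_{N}\rangle_{N}\\
&+\frac{\epsilon}{\gamma}\langle D_{N}(Z_{N}\cdot U_{N}),U_{N}\rangle_{N}+\frac{\epsilon}{2\gamma}\langle Z_{N},D_{N}(U_{N}\cdot U_{N})\rangle_{N}.
\end{align*}
The first two terms vanish for arbitrary $Z_{N},U_{N}$: symmetry of $\mathcal{L}_{\mu_{2},h}$ (resp.\ $J_{c,h}$), its commutation with $D_{N}$, and the skew-symmetry of $D_{N}$ force each of these inner products to equal its own negative; this is the discrete counterpart of $\int(\mathcal{L}u_{x})u\,dx=0$ and $\int\zeta J_{c}\zeta_{x}\,dx=0$. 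For the remaining two terms I would move $D_{N}$ off the Hadamard products by skew-symmetry, using $\langle D_{N}(Z_{N}\cdot U_{N}),U_{N}\rangle_{N}=-\langle Z_{N},U_{N}\cdot(D_{N}U_{N})\rangle_{N}$, so that the nonlinear contribution collapses to $\tfrac{\epsilon}{2\gamma}\langle Z_{N},\mathcal{E}\rangle_{N}$ with $\mathcal{E}:=D_{N}(U_{N}\cdot U_{N})-2\,U_{N}\cdot(D_{N}U_{N})$. Continuously $\mathcal{E}$ would be $\partial_{x}(u^{2})-2u\,\partial_{x}u\equiv0$, which is what conserves the momentum for the pde; discretely the collocation product fails this Leibniz identity (aliasing), so $\mathcal{E}\neq0$ in general, and this defect is exactly what would otherwise spoil conservation of $I_{h}$.

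The reversal symmetry is what cures this. From $DU_{N}=U_{N}$ and $D(a\cdot b)=(Da)\cdot(Db)$ we get $D(U_{N}\cdot U_{N})=U_{N}\cdot U_{N}$, hence $D_{N}(U_{N}\cdot U_{N})$ is $D$-antisymmetric because $D_{N}$ anticommutes with $D$; likewise $D(U_{N}\cdot(D_{N}U_{N}))=(DU_{N})\cdot(DD_{N}U_{N})=-U_{N}\cdot(D_{N}U_{N})$ is $D$-antisymmetric. Therefore $D\mathcal{E}=-\mathcal{E}$, while $DZ_{N}=Z_{N}$, so $\langle Z_{N},\mathcal{E}\rangle_{N}=0$. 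Hence $\tfrac{d}{dt}I_{h}\equiv0$, and integrating in $t$ gives (\ref{BFD47}). The one real obstacle is the handling of $\mathcal{E}$: because the pseudospectral nonlinearity does not satisfy the Leibniz rule, the continuous conservation argument does not transfer verbatim, and the substance of the theorem is that the reversal symmetry forces $\mathcal{E}$ into the $D$-antisymmetric subspace, which is $\langle\cdot,\cdot\rangle_{N}$-orthogonal to the $D$-symmetric $Z_{N}$. (For even $N$ one should also ensure that $D_{N}$ is genuinely skew-symmetric, i.e.\ treat the Nyquist mode accordingly — a property of the scheme, not of the solution; alternatively the same term-by-term cancellations are visible in the discrete-Fourier form of (\ref{BFD23}).)
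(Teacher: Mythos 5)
Your argument is correct and follows essentially the same route as the paper's proof: differentiate $I_{h}$ along the flow, use the symmetry of $J_{b,h}$, the commutation of the reversal matrix $D$ with the symbol operators and its anticommutation with $D_{N}$, and the orthogonality of $D$-symmetric and $D$-antisymmetric vectors (the content of Lemma 3.4 of \cite{Cano2006}) to kill every term. The only cosmetic differences are that you dispose of the two linear terms by pure skew-symmetry of $\mathcal{L}_{\mu_{2},h}D_{N}$ and $J_{c,h}D_{N}$ without invoking the reversal hypothesis, and that you merge the two nonlinear terms into a single Leibniz-defect vector $\mathcal{E}$ before applying the orthogonality, whereas the paper cancels each nonlinear inner product separately.
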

\begin{proof}
As in \cite{Cano2006}, taking the symmetric collocation trigonometric interpolant, we have, for $1\leq l,j\leq N$
\begin{equation*}
(D_{N})_{lj}={\rm Re}\left(\frac{\pi}{NL}\sum_{m=-M}^{M-1} \omega_{N}^{m(j-l)}im\right),
\end{equation*}
where $M=N/2, \omega_{N}=e^{-2\pi i/N}$. This implies $DD_{N}=-D_{N}D$, and therefore
\begin{equation}\label{BFD48}
D_{N}^{2}D=DD_{N}^{2},\quad J_{c,h}D=DJ_{c,h}.
\end{equation}
In a similar way
\begin{eqnarray*}
(|D_{N}|)_{lj}&=&{\rm Re}\left(\frac{\pi}{NL}\sum_{m=-M}^{M-1} \omega_{N}^{m(j-l)}|m|\right),\\
(|D_{N}|{\rm coth}\sqrt{\mu_{2}}|D_{N}|)_{lj}&=&{\rm Re}\left(\frac{\pi}{NL}\sum_{m=-M}^{M-1} \omega_{N}^{m(j-l)}|m|{\rm coth}\sqrt{\mu_{2}}|m|\right),
\end{eqnarray*}
and therefore
\begin{equation}\label{BFD49}
\mathcal{L}_{\mu_{2},h}D=D\mathcal{L}_{\mu_{2},h}.
\end{equation}
Let $(Z_{N},U_{N})$ be a solution of (\ref{BFD23}) satisfying (\ref{BFD46}). Then, for all $t\geq 0$
\begin{eqnarray*}
D(Z_{N}.U_{N})&=&(DZ_{N}).(DU_{N})=Z_{N}.U_{N},\\
D(U_{N}.U_{N})&=&(DU_{N}).(DU_{N})=U_{N}.U_{N},
\end{eqnarray*}
and using Lemma 3.4 of \cite{Cano2006} we have
\begin{equation}\label{BFD410}
\langle D_{N}(Z_{N}.U_{N}),U_{N}\rangle_{N}=0,\quad 
\langle Z_{N},D_{N}(U_{N}.U_{N})\rangle_{N}=0.
\end{equation}
We now apply (\ref{BFD23}), (\ref{BFD48})-(\ref{BFD410}) to obtain
\begin{eqnarray*}
\frac{d}{dt}I_{h}(Z_{N},U_{N})&=&\langle\frac{d}{dt}Z_{N},J_{b,h}U_{N}\rangle_{N}+\langle Z_{N},\frac{d}{dt}J_{b,h}U_{N}\rangle_{N}\\
&=&\langle J_{b,h}\frac{d}{dt}Z_{N},U_{N}\rangle_{N}+\langle Z_{N},J_{b,h}\frac{d}{dt}U_{N}\rangle_{N}\\
&=&-\langle \mathcal{L}_{\mu_{2},h}D_{N}U_{N}-\frac{\epsilon}{\gamma}D_{N}(Z_{N}.U_{N}),U_{N}\rangle_{N}\\
&&-\langle Z_{N},(1-\gamma)J_{c,h}D_{N}Z_{N}-\frac{\epsilon}{2\gamma}D_{N}(U_{N}.U_{N})\rangle_{N}\\
&=&-\langle \mathcal{L}_{\mu_{2},h}D_{N}U_{N},U_{N}\rangle_{N}-\langle Z_{N},(1-\gamma)J_{c,h}D_{N}Z_{N}\rangle_{N}\\
&=&-\langle D_{N}U_{N},\mathcal{L}_{\mu_{2},h}U_{N}\rangle_{N}-(1-\gamma)\langle D_{N}Z_{N},J_{c,h}Z_{N}\rangle_{N}.
\end{eqnarray*}
Note now that, from (\ref{BFD46}), (\ref{BFD48}), and (\ref{BFD49})
\begin{eqnarray*}
D\mathcal{L}_{\mu_{2},h}U_{N}&=&\mathcal{L}_{\mu_{2},h}DU_{N}=\mathcal{L}_{\mu_{2},h}U_{N},\\
DJ_{c,h}Z_{N}&=&J_{c,h}DZ_{N}=J_{c,h}Z_{N},
\end{eqnarray*}
and again Lemma 3.4 of \cite{Cano2006} and (\ref{BFD46}) imply
\begin{equation*}
\langle D_{N}U_{N},\mathcal{L}_{\mu_{2},h}U_{N}\rangle_{N}=0,\quad 
\langle D_{N}Z_{N},J_{c,h}Z_{N}\rangle_{N}=0.
\end{equation*}
Therefore, $\frac{d}{dt}I_{h}(Z_{N},U_{N})=0$ and (\ref{BFD47}) follows.
\end{proof}
\subsubsection{Full discretization}
The ode semidiscrete system (\ref{BFD23}) is integrated in time by the Runge-Kutta Composition method of Butcher tableau
\begin{eqnarray}
\label{IMRC}
\begin{array}{c|ccc}
&b_{1}/2& \\[2pt]
&b_{1}&b_{2}/2\\[2pt]
&b_{1}&b_{2}&b_{3}/2\\[2pt]
\hline
\\[-9pt]
 & b_{1}&b_{2}&b_{3} 
 \end{array},
\end{eqnarray}
where $b_{1}=1/(2-2^{1/3}), b_{2}=1-2b_{1}, b_{3}=b_{1}$. The method (\ref{IMRC}) is based on the composition of the implicit midpoint rule with step sizes $b_{j}\Delta t, j=1,2,3$, where $\Delta t$ denotes the time step. The scheme has order four of convergence, it is symplectic and symmetric, \cite{Yoshida1990,SanzSC1994,HLW2004}. Its performance when applied to nonlinear dispersive equations has been analyzed, theoretically and computationally, in several papers, \cite{FrutosS1992,DD,DDS1,DDS_arxiv}. Note that its symplectic character implies the preservation of (\ref{BFDdMom})
\begin{equation*}
I_{h}(\zeta^{n},u^{n})=I_{h}(\zeta^{0},u^{0}),
\end{equation*}
for approximations $(\zeta^{n},u^{n})$ to $(\zeta^{N}(t_{n}), u^{N}(t_{n})), t_{n}=n\Delta t, n=0,1,\ldots,N_{T}$, satisfying
\begin{equation*}
D\zeta^{n}=\zeta^{n},\quad Du^{n}=u^{n},
\end{equation*}
for all $n=0,\ldots,N_{T}, T=N_{T}\Delta t$, and $D$ given by (\ref{Dop}). As in other cases, \cite{DDS1,DDS_arxiv}, for the experiments below we observed experimentally that a Courant condition of the form $Nk=O(1)$ was enough to ensure stability and convergence of the full discretization to smooth solutions of the periodic ivp for (\ref{BFD1}).
\subsubsection{Some experiments of validation}
In this section we check the performance of the fully discrete method. To this end, we consider the approximate solitary wave profile computed in section \ref{sec3} for the values specified in (\ref{321}), $\gamma=0.8$ and speed $c_{s}=0.4$ (cf. Figure \ref{Fig_two}(a)). This solitary wave is taken as initial condition of the scheme and some elements in the evolution of the resulting numerical solution are monitored, namely:
\begin{itemize}
\item The character of the approximation as a solitary wave, that is, as a wave of permanent form and negligible disturbances (Figure \ref{Fig_six}). The experiment also serves as an additional evidence of the accuracy of the technique introduced in section \ref{sec3} to generate numerically solitary wave profiles.
\item The behaviour of the full discretization with respect to the amplitude and speed of the solitary wave. This is illustrated in Figure \ref{Fig_seven}, where the evolution of the corresponding errors is shown for $\Delta t=6.25\times 10^{-3}$. Both figures show the high accuracy of the numerical method when computing the main parameters of the wave and retaining them during the simulation, as another proof of the good performance of the numerical solution to approximate the solitary-wave form.
\item The evolution of the error with respect to the quantities (\ref{BFDdEnergy}) and (\ref{BFDdMom}) is shown in Figure \ref{Fig_eight} for $\Delta t=6.25\times 10^{-3}$. The good behaviour observed is a last guarantee of the full discretization as a reliable choice to perform the experiments about the dynamics of (\ref{BFD1}) in the following sections.
\end{itemize}
\begin{figure}[htbp]
\centering
{\includegraphics[width=0.8\columnwidth]{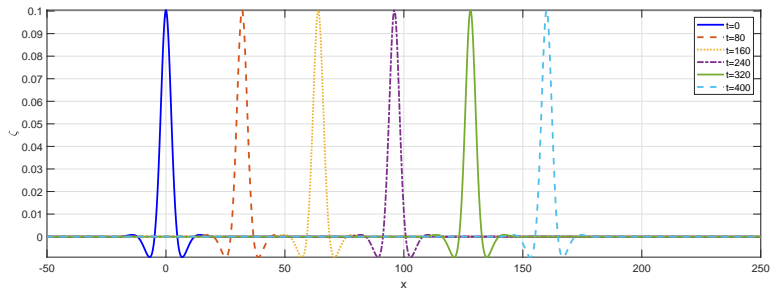}}
\caption{Evolution of the $\zeta$ component of the numerical solution with initial solitary wave profile given in Figure  \ref{Fig_two}(a).}
\label{Fig_six}
\end{figure}
\begin{figure}[htbp]
\centering
\subfigure[]
{\includegraphics[width=6.2cm]{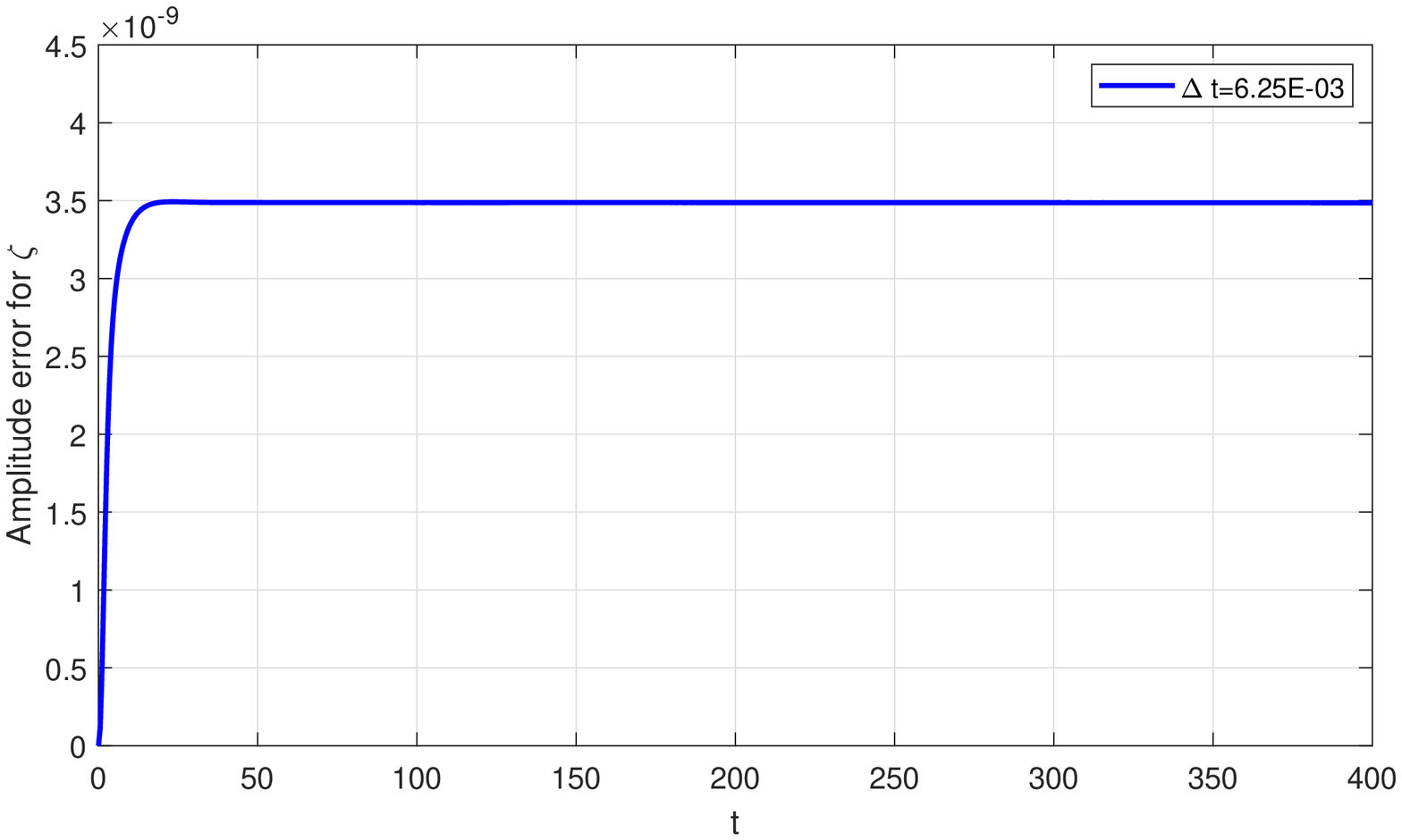}}
\subfigure[]
{\includegraphics[width=6.2cm]{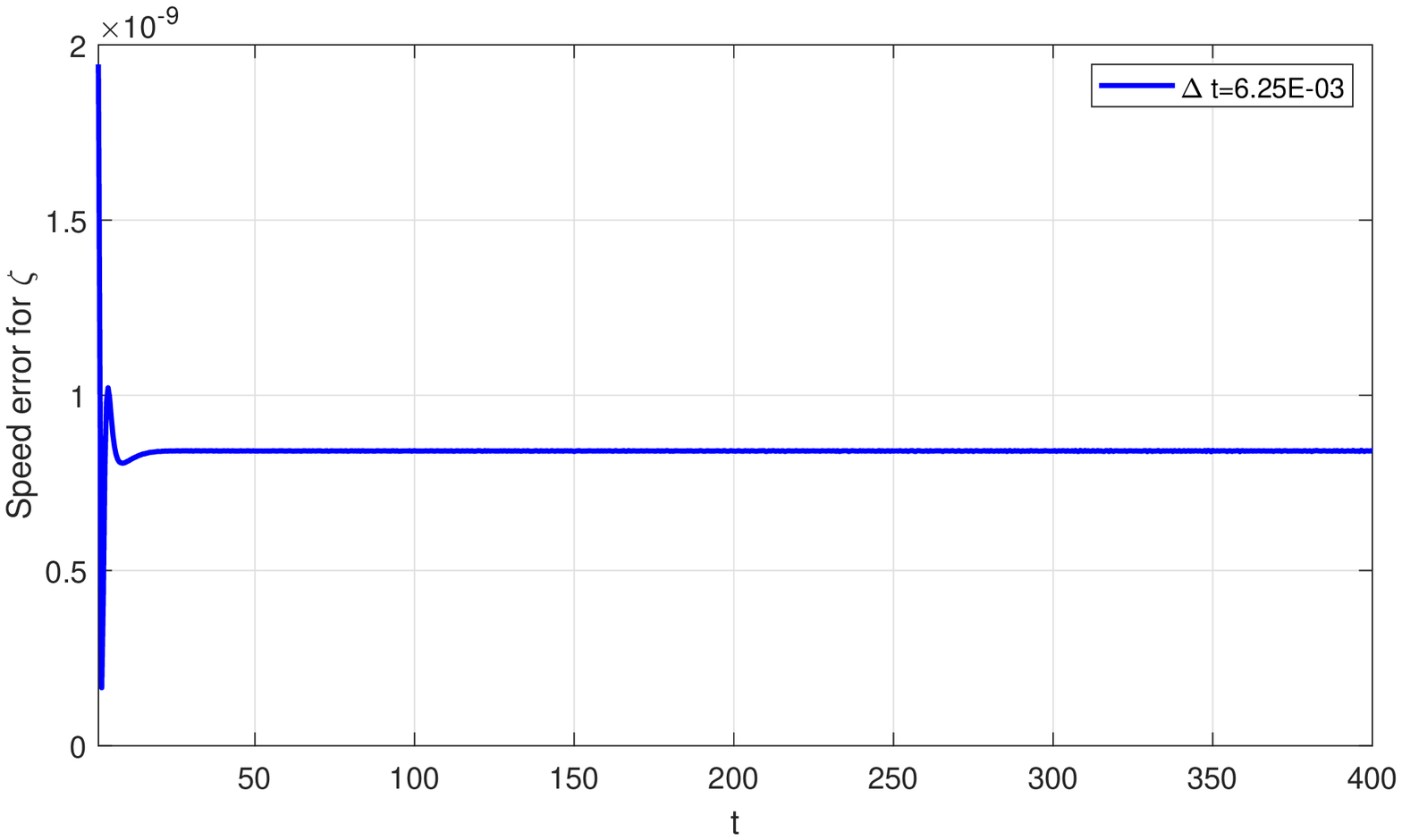}}
\caption{Evolution of the error between the (a) amplitude and (b) speed of the $\zeta$ component of the numerical solution and that of the initial solitary wave profile given in Figure  \ref{Fig_two}(a).} 
\label{Fig_seven}
\end{figure}

\begin{figure}[htbp]
\centering
\subfigure[]
{\includegraphics[width=6.3cm]{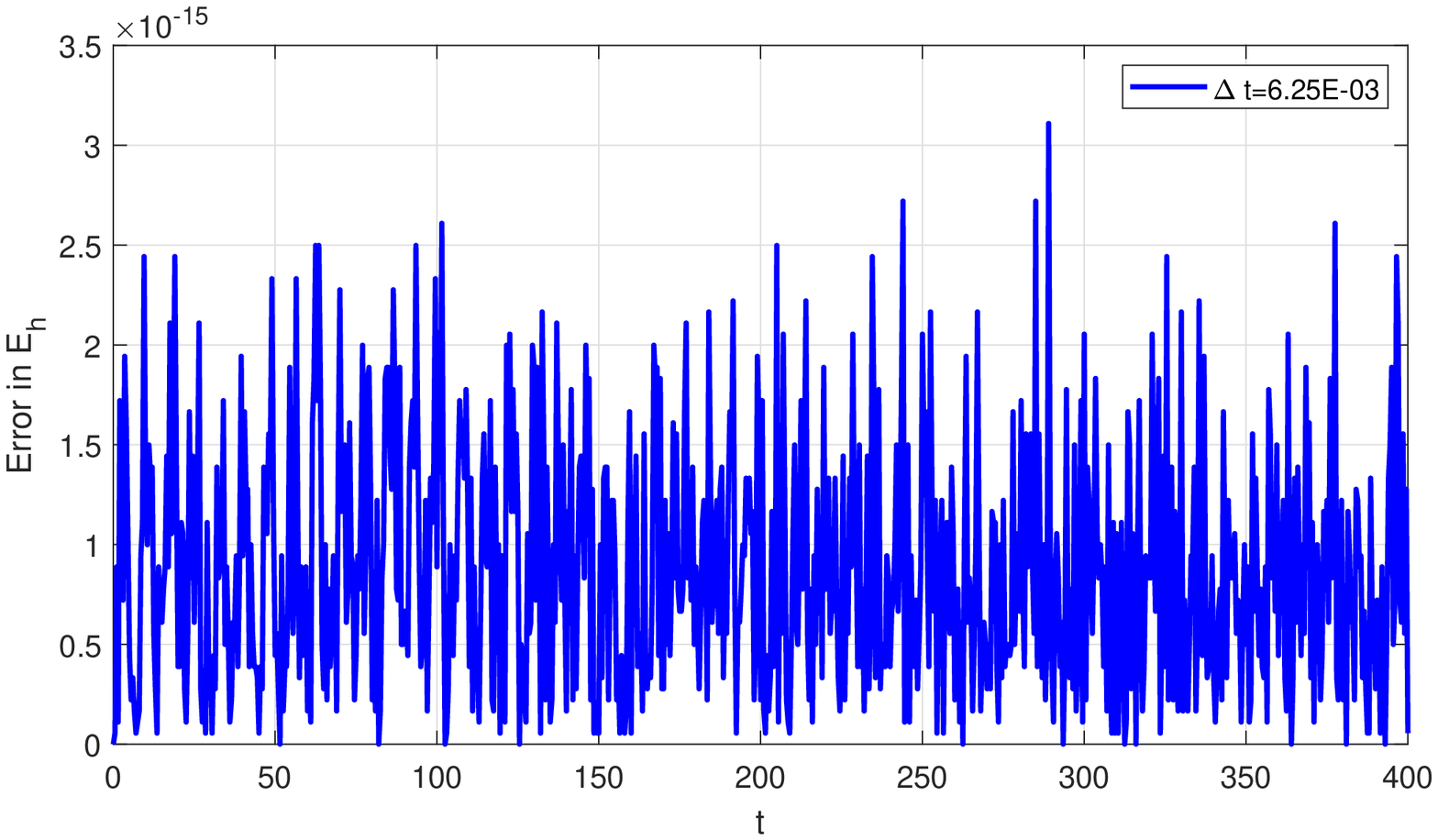}}
\subfigure[]
{\includegraphics[width=6.1cm]{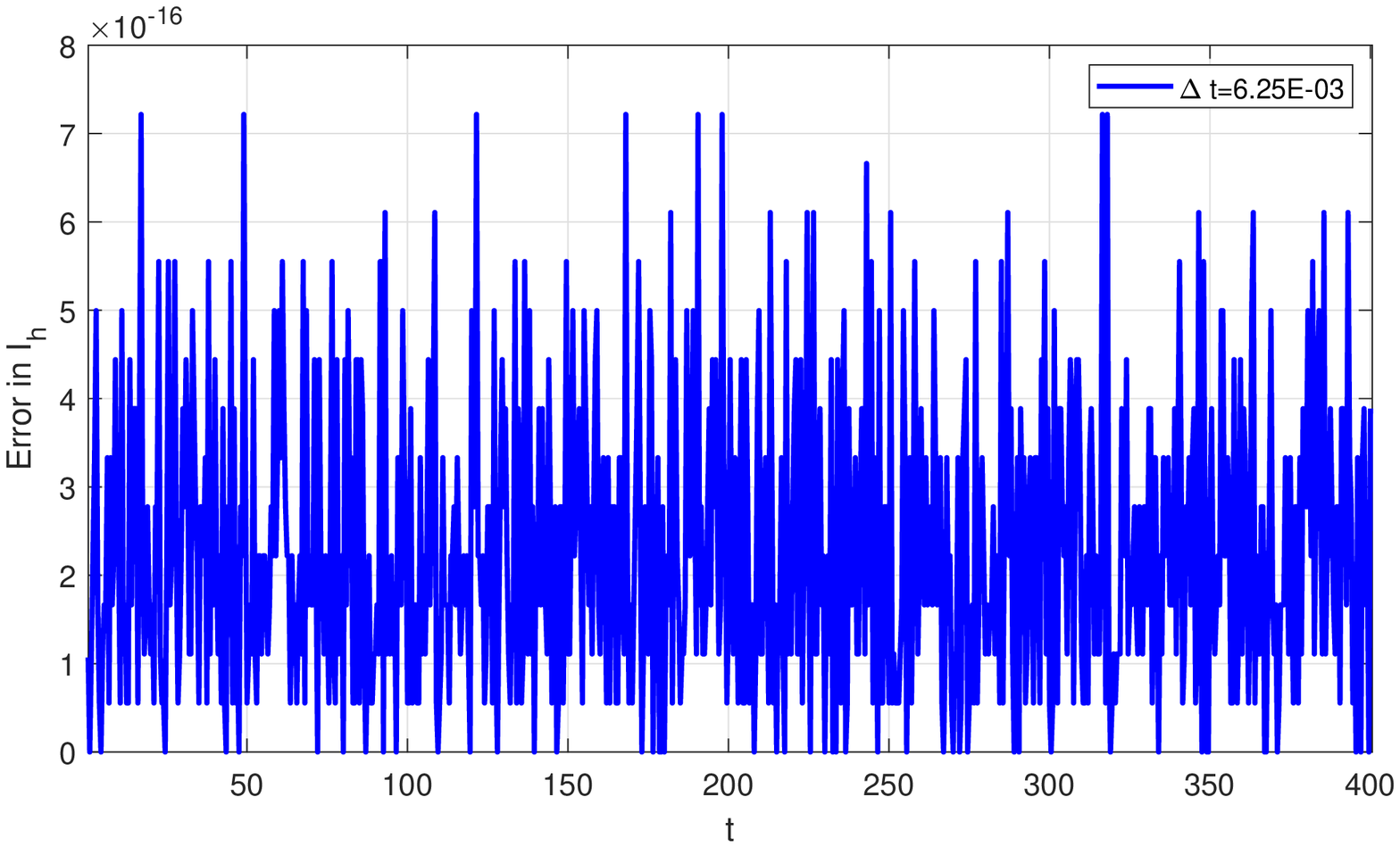}}
\caption{Evolution of the error in the invariants (a) (\ref{BFDdEnergy}), and (b) (\ref{BFDdMom}), for the initial solitary wave profile given in Figure  \ref{Fig_two}(a).}
\label{Fig_eight}
\end{figure}
\subsection{Small perturbations of solitary waves}
\label{sec42}
We start the computational study with the analysis of the behaviour of the solitary waves under small perturbations. Here the experiments consist of perturbing the amplitude of the computed solitary wave used in the previous section by some small quantity $A$, taking the perturbed wave as initial condition of the code and monitoring the evolution of the corresponding numerical solution and its parameters up to a final time. In the cases below, if $(\zeta_{0},u_{0})$ denotes the computed solitary wave profile, then the perturbed initial condition will be of the form $(A\zeta_{0},Au_{0})$. (Indeed perturbations of the form $(A\zeta_{0},u_{0})$ or $(\zeta_{0},Au_{0})$ can also be taken.)
We have considered the cases $A$ close to $1$ with $A<1$ and $A>1$. The conclusions from the experiments are similar in both cases and then only the results corresponding to $A=1.2$ will be shown. 
\begin{figure}[htbp]
\centering
\subfigure[]
{\includegraphics[width=\columnwidth]{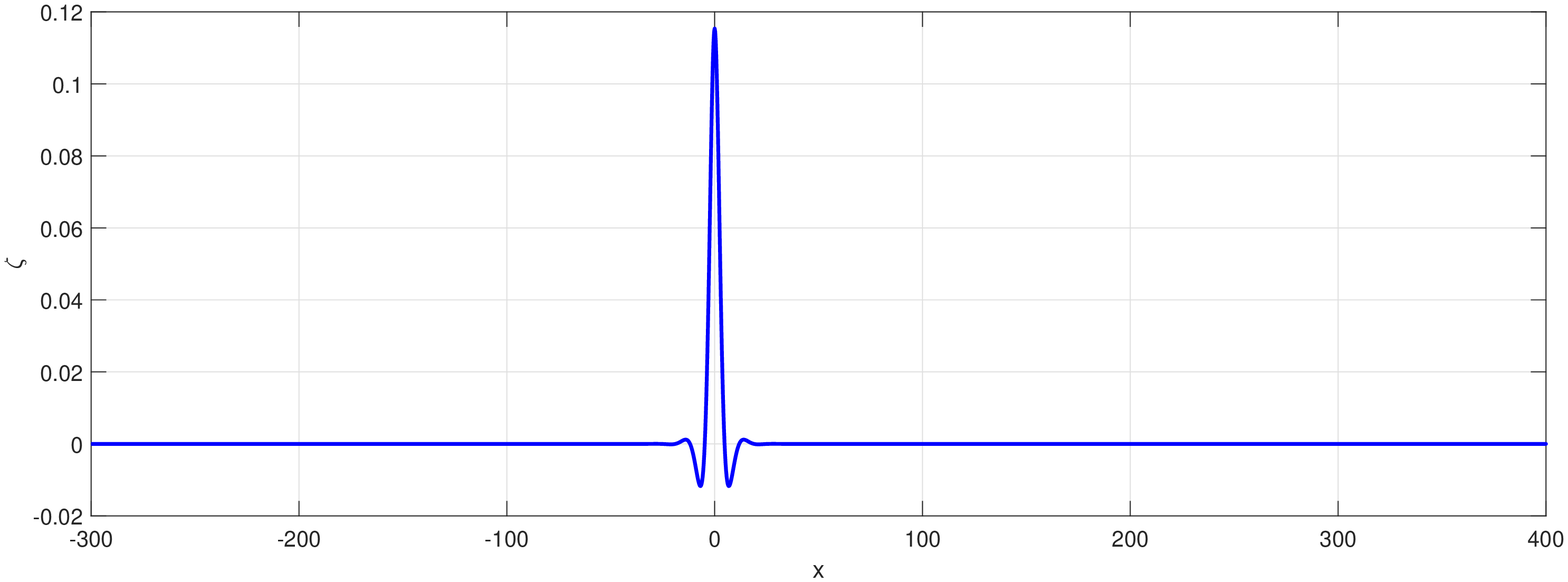}}
\subfigure[]
{\includegraphics[width=\columnwidth]{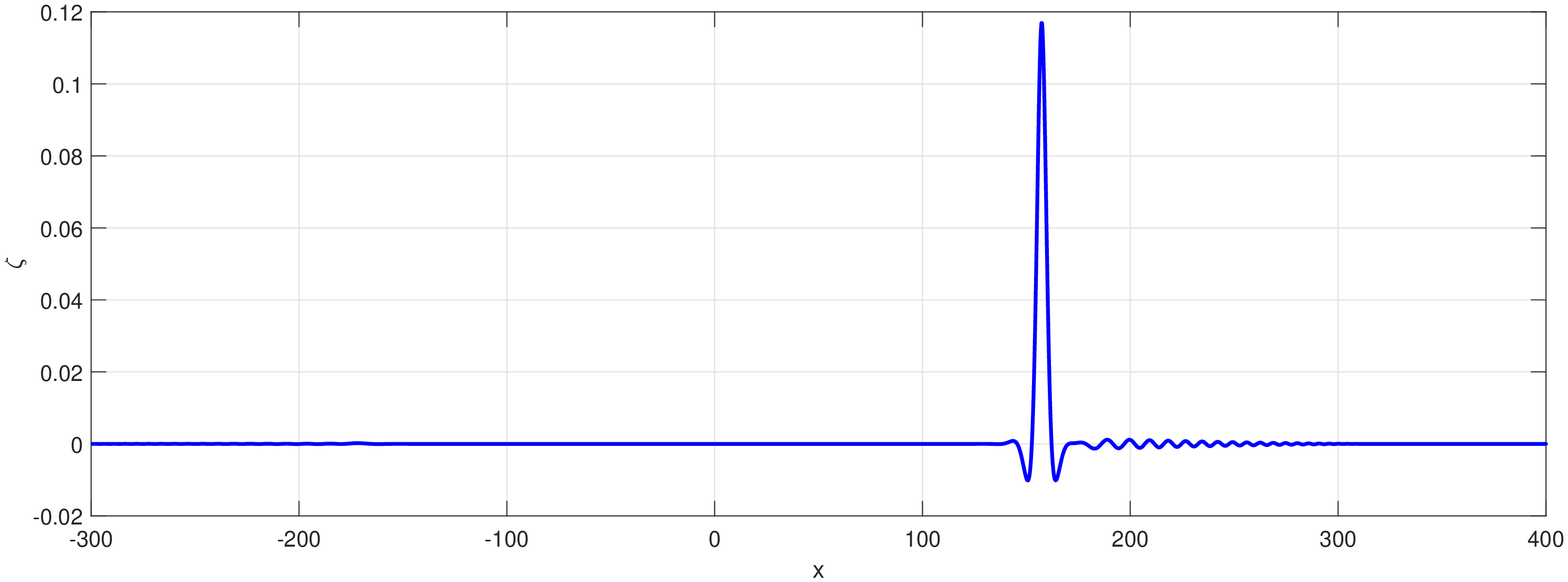}}
\subfigure[]
{\includegraphics[width=\columnwidth]{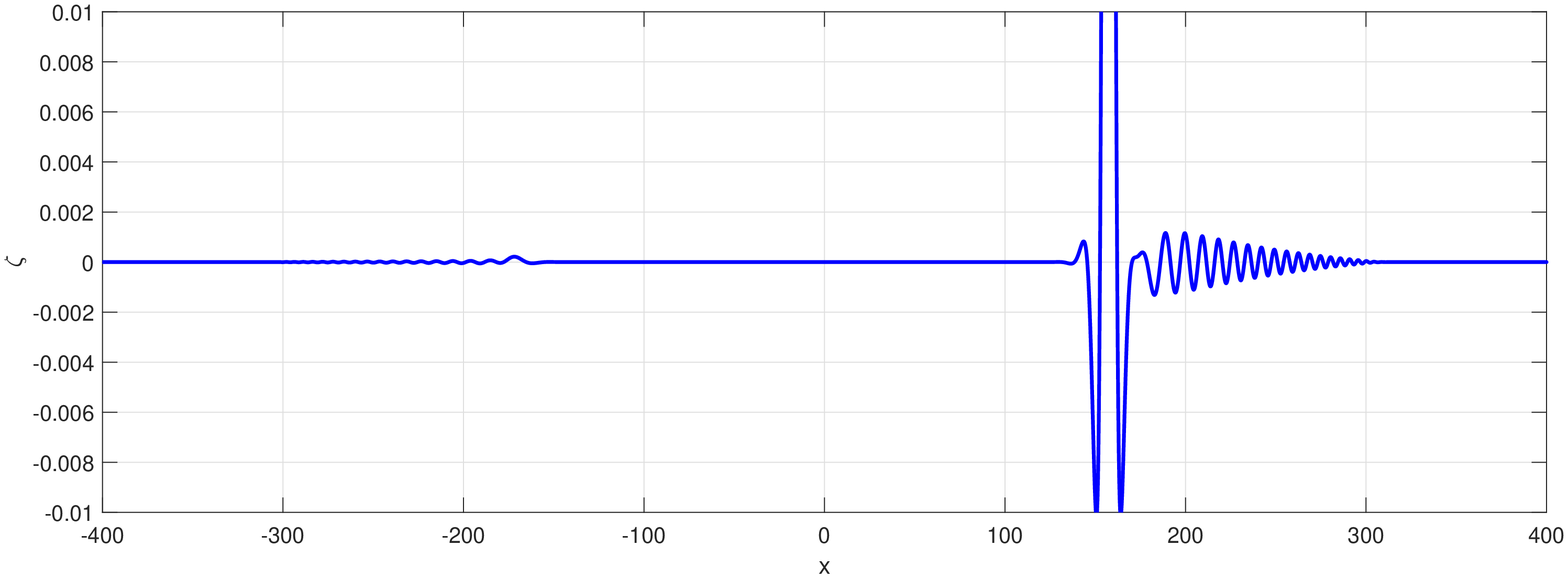}}
\caption{Evolution of the $\zeta$ component of the numerical solution from initial perturbed solitary wave profile with $A=1.2$. (a) $t=0$; (b) $t=400$; (c) Magnification of (b).}
\label{Fig_nine}
\end{figure}
\begin{figure}[htbp]
\centering
\subfigure[]
{\includegraphics[width=6.2cm]{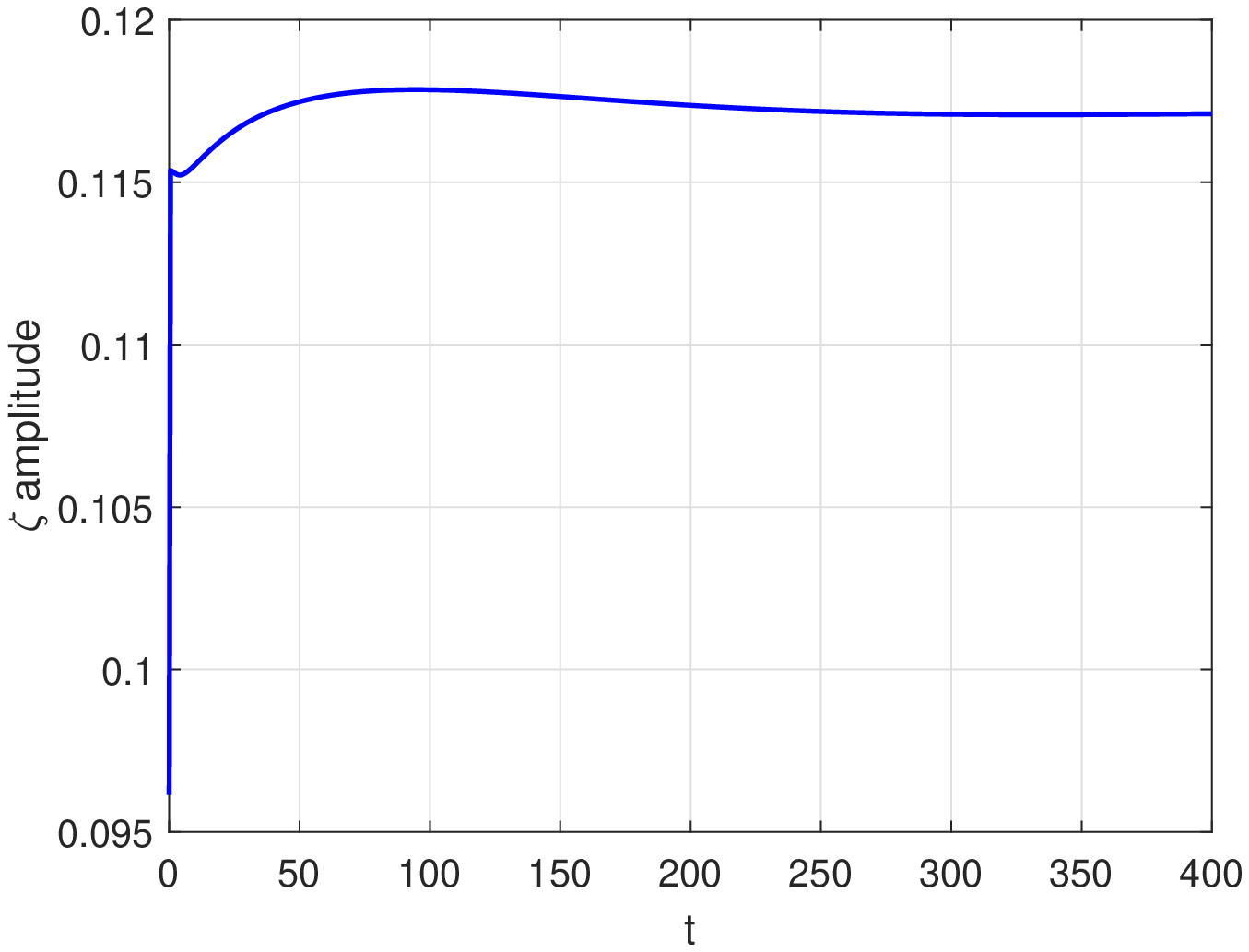}}
\subfigure[]
{\includegraphics[width=6.2cm]{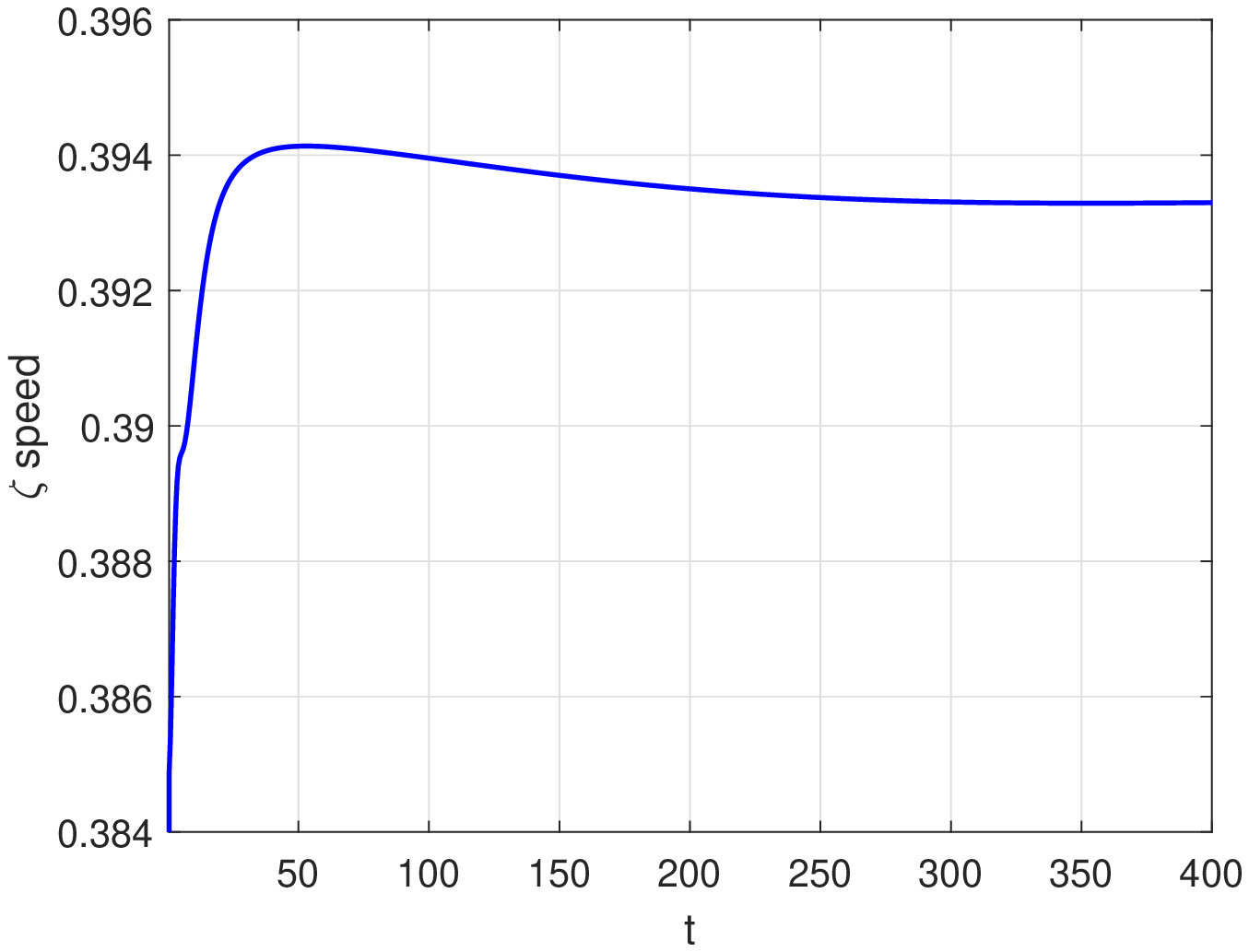}}

\caption{Amplitude and speed of the $\zeta$ component of the main pulse of the numerical solution from initial perturbed solitary wave profile with $A=1.2$. The initial amplitude and speed are $\max\zeta_{0}=1.38444e-01, c_{s}=4e-01$.}
\label{Fig_ten}
\end{figure}
\begin{figure}[htbp]
\centering
{\includegraphics[width=\columnwidth]{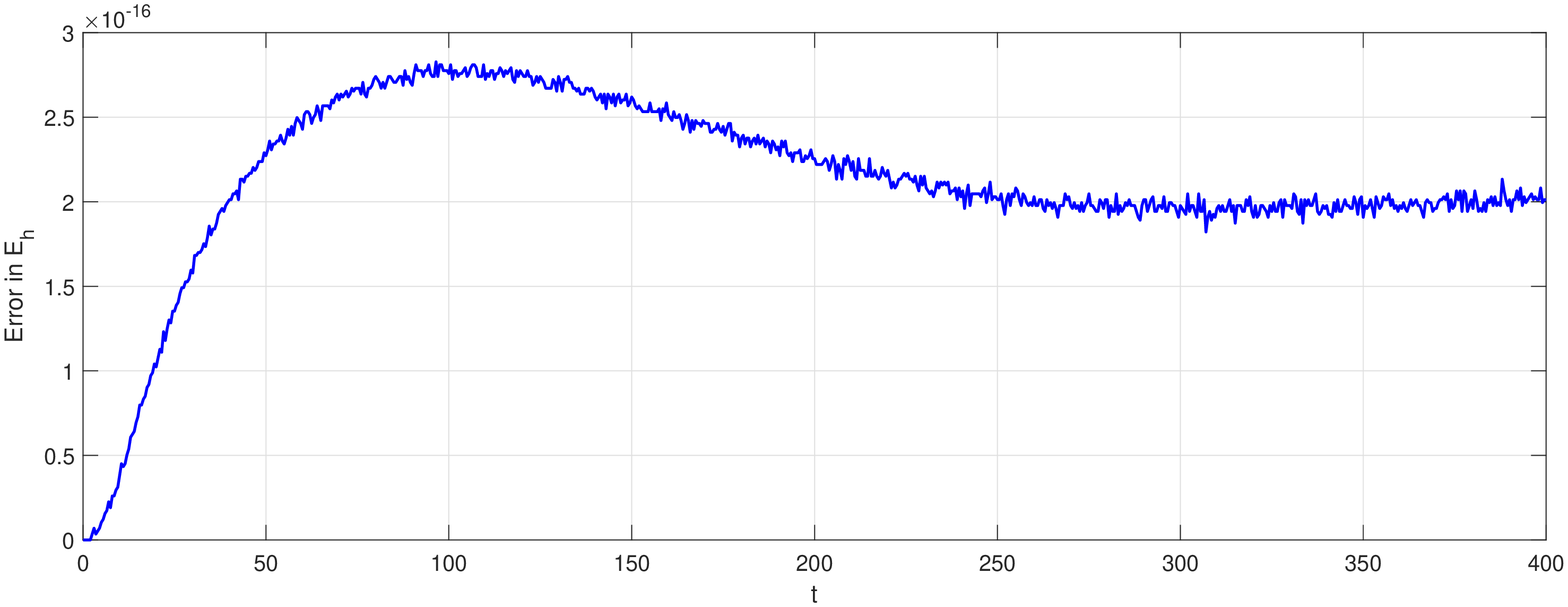}}
\caption{Evolution of the error in the energy (\ref{BFDdEnergy}) between the numerical solution and the initial perturbed solitary wave profile with $A=1.2$.}
\label{Fig_eleven}
\end{figure}
In order to illustrate the computational study in this case, we selected the experiments concerning the evolution of the $\zeta$ component of the numerical solution (Figure \ref{Fig_nine}), the evolution of amplitude and speed of the main pulse emerged in the numerical solution (Figure \ref{Fig_ten}) and the evolution of the error in the energy (\ref{BFDdEnergy}) (Figure \ref{Fig_eleven}). While the behaviour observed in this last figure ensures somehow the accuracy of the simulation, Figures \ref{Fig_nine} and \ref{Fig_ten} illustrate that the main effects of the small perturbations seem to be the generation of a modified solitary-wave pulse and the formation of dispersive tails in front of and behind it. The corresponding evolution of the speed $\widetilde{c_{s}}$ of the emerging wave suggest a small change from the original one $c_{s}$. It was observed that $\widetilde{c_{s}}>c_{s}$ when $A<1$ and $\widetilde{c_{s}}<c_{s}$ when $A>1$. (According to the speed-amplitude relation suggested in section \ref{sec3}, the corresponding comments on the relation between the amplitudes can be made.)

The generation of the dispersive tails can be studied, as usual, from the linearized system  of (\ref{BFD}) moving with a solitary wave with speed $c_{s}$
 \begin{eqnarray*}
J_{b}(\partial_{t}-c_{s}\partial_{y})\zeta+\mathcal{L}_{\mu_{2}}\partial_{y}v_{\beta}&=&0,\nonumber\\
J_{b}(\partial_{t}-c_{s}\partial_{y})v_{\beta}+(1-\gamma)J_{c}\partial_{y}\zeta&=&0,\label{BFD31}
\end{eqnarray*}
($y=x-c_{s} t$) and which can be simplified to
 \begin{eqnarray}
J_{b}^{2}(\partial_{t}-c_{s}\partial_{y})^{2}\zeta-(1-\gamma)\mathcal{L}_{\mu_{2}}J_{c}\partial_{y}^{2}\zeta=0.\label{BFD32}
\end{eqnarray}
Plane wave solutions $\zeta=e^{i(ky-\omega(k)t)}$ will satisfy
\begin{eqnarray*}
-(1+b\mu k^{2})^{2}(\omega(k)+kc_{s})^{2}+(1-\gamma)l_{\mu_{2}}(k)k^{2}(1-c\mu k^{2})=0,
\end{eqnarray*}
that is
\begin{eqnarray}
\omega_{\pm}(k)&=&-c_{s} k\pm k\varphi(k),\nonumber\\
\varphi(k)&=&\frac{\sqrt{(1-\gamma)l_{\mu_{2}}(k)(1-c\mu k^{2}})}{1+b\mu k^{2}},\label{BFD33}
\end{eqnarray}
with the corresponding phase speed
\begin{eqnarray*}
v_{\pm}(k):=\frac{\omega_{\pm}(k)}{k}=-c_{s} \pm \varphi(k),
\end{eqnarray*}
and group velocity
\begin{eqnarray*}
\omega_{\pm}^{\prime}(k)=-c_{s}\pm (k\varphi^{\prime}(k)+\varphi(k)).
\end{eqnarray*} 
The function $\varphi$ in (\ref{BFD33}) is analyzed in Appendix \ref{appendixA} as a key part for the proof of existence of solitary wave solutions of (\ref{BFD1}). Specifically, $\varphi(k)$ is shown to be positive for $k\in\mathbb{R}$ and for $\gamma\in (0,1)$ defines the speed limit
\begin{equation*}
c_{\gamma}=\sqrt{{\inf}_{k}\varphi(k)}>0,
\end{equation*}
for which solitary wave solutions of speed $c_{s}$ with $|c_{s}|<c_{\gamma}$ are proved to exist. This implies that $v_{\pm}(k)<0$ for all $k\in\mathbb{R}$.
Thus we would have the existence of components 
$e^{i(ky-\omega(k)t)}$ leading the pulse (with positive phase speed in the coordinate system moving with the main wave) and components trailing the main pulse. This justifies the presence of the two groups of dispersive tails observed in the experiments.
\subsection{Large perturbations of solitary waves and resolution property}
As the perturbation factor $A$ grows, some other phenomena are observed in the experiments. The most relevant one is the resolution property: for $A$ large enough, the initial perturbed solitary wave profile evolves generating a main solitary wave, additional solitary waves behind and in front of it, and dispersive tails. A good example of this is shown in Figure \ref{Fig_twelve}, corresponding to the evolution of the initial solitary wave used in Figure \ref{Fig_nine} and perturbed with a factor $A=6$. In this case, a main solitary wave, traveling to the right, emerges (the evolution of the amplitude and speed is shown in Figure \ref{Fig_13}), and, at the final time of computation, the formation of two additional solitary waves are observed, one traveling to the right and one to the left. According to the computational study performed in section \ref{sec3}, in all the cases the magnitude of their speeds is larger than that of the main wave (the amplitude decreases and the nonmonotone behaviour of the decay increases). Some groups of waves are generated in front of these additional solitary waves. They mainly seem to have a dispersive nature although, due to the form of the solitary waves observed in some experiments in section \ref{sec3}, the generation of some new nonlinear structures is not discarded.
\begin{figure}[htbp]
\centering
\subfigure[]
{\includegraphics[width=\columnwidth]{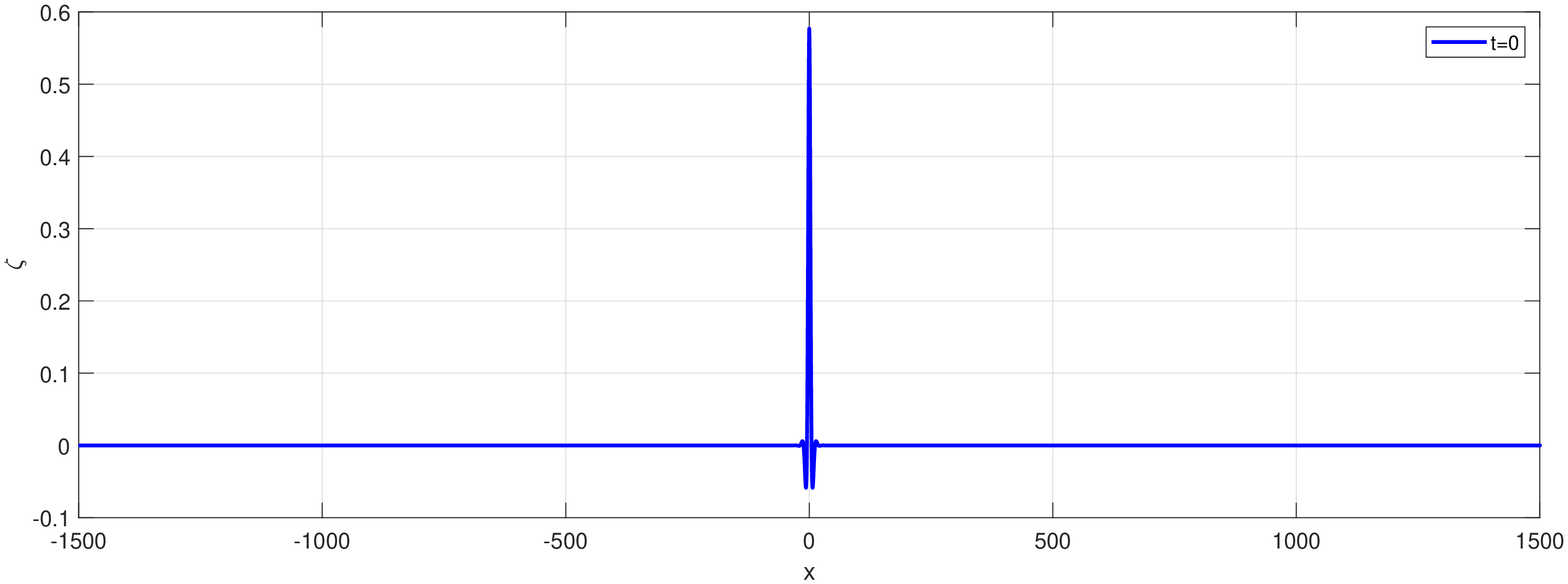}}
\subfigure[]
{\includegraphics[width=\columnwidth]{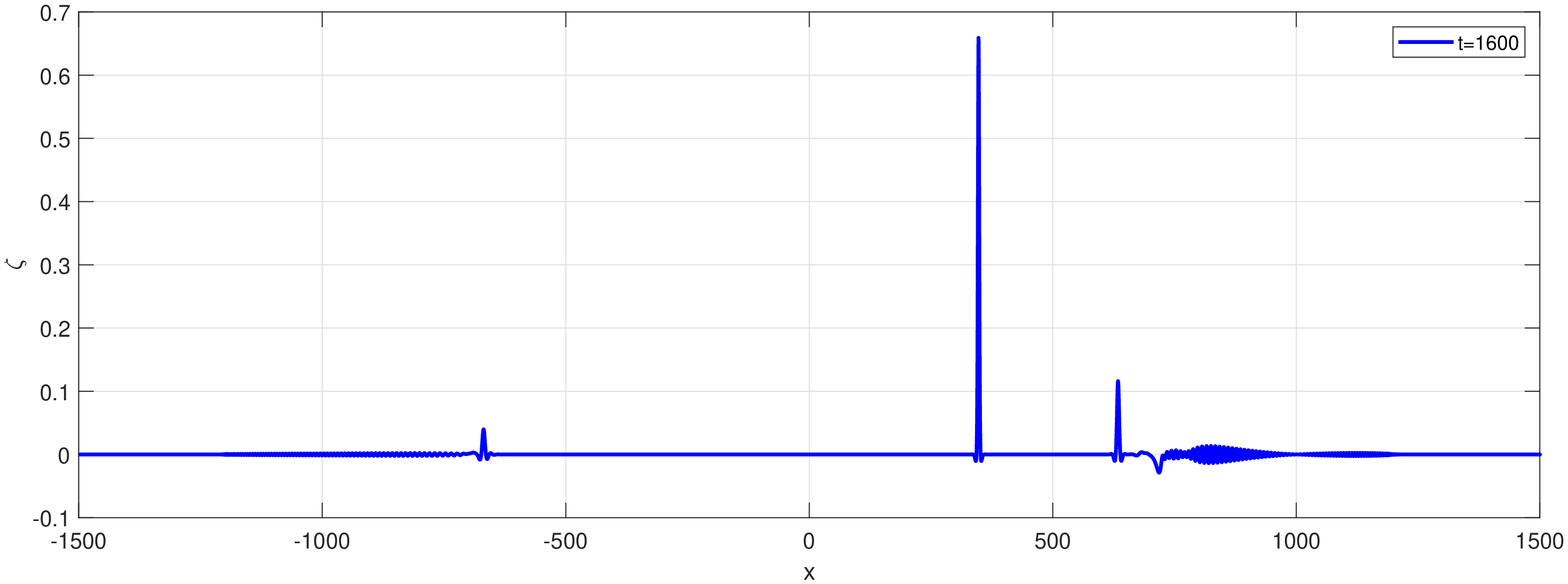}}
\subfigure[]
{\includegraphics[width=\columnwidth]{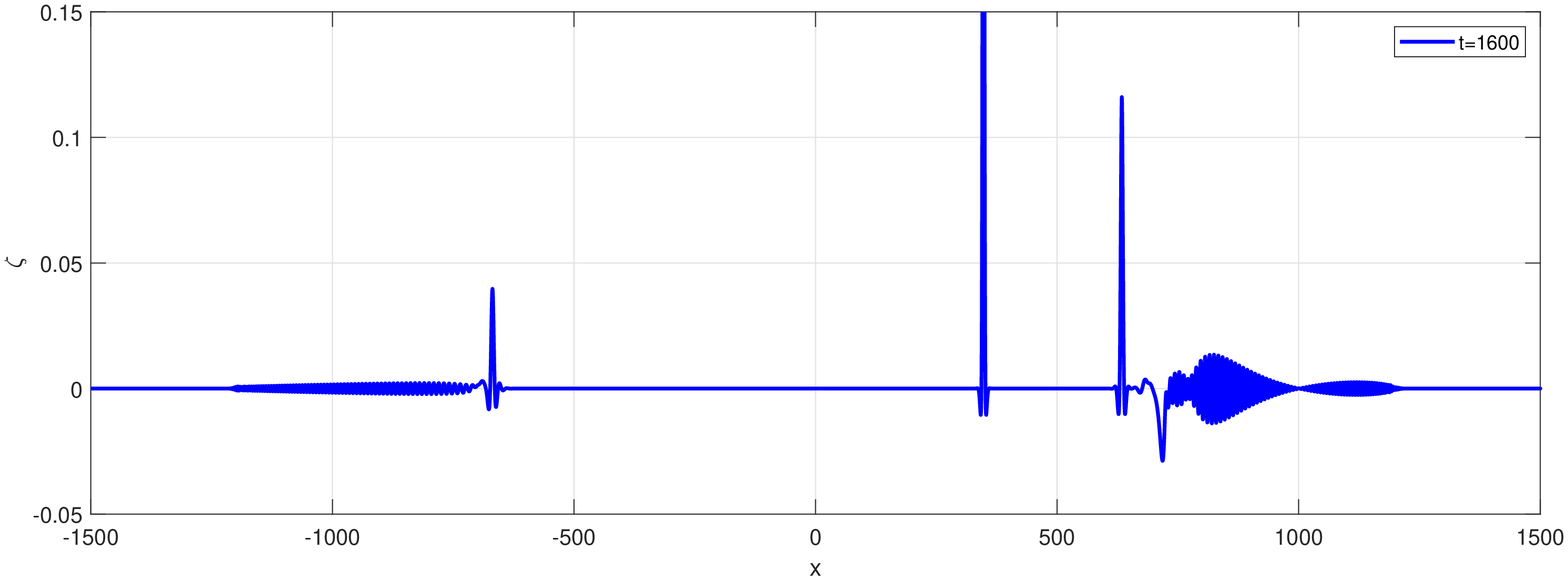}}
\caption{Evolution of the $\zeta$ component of the numerical solution from initial perturbed solitary wave profile with $A=6$. (a) $t=0$; (b) $t=1600$; (c) Magnification of (b).}
\label{Fig_twelve}
\end{figure}
\begin{figure}[htbp]
\centering
\subfigure[]
{\includegraphics[width=6.2cm]{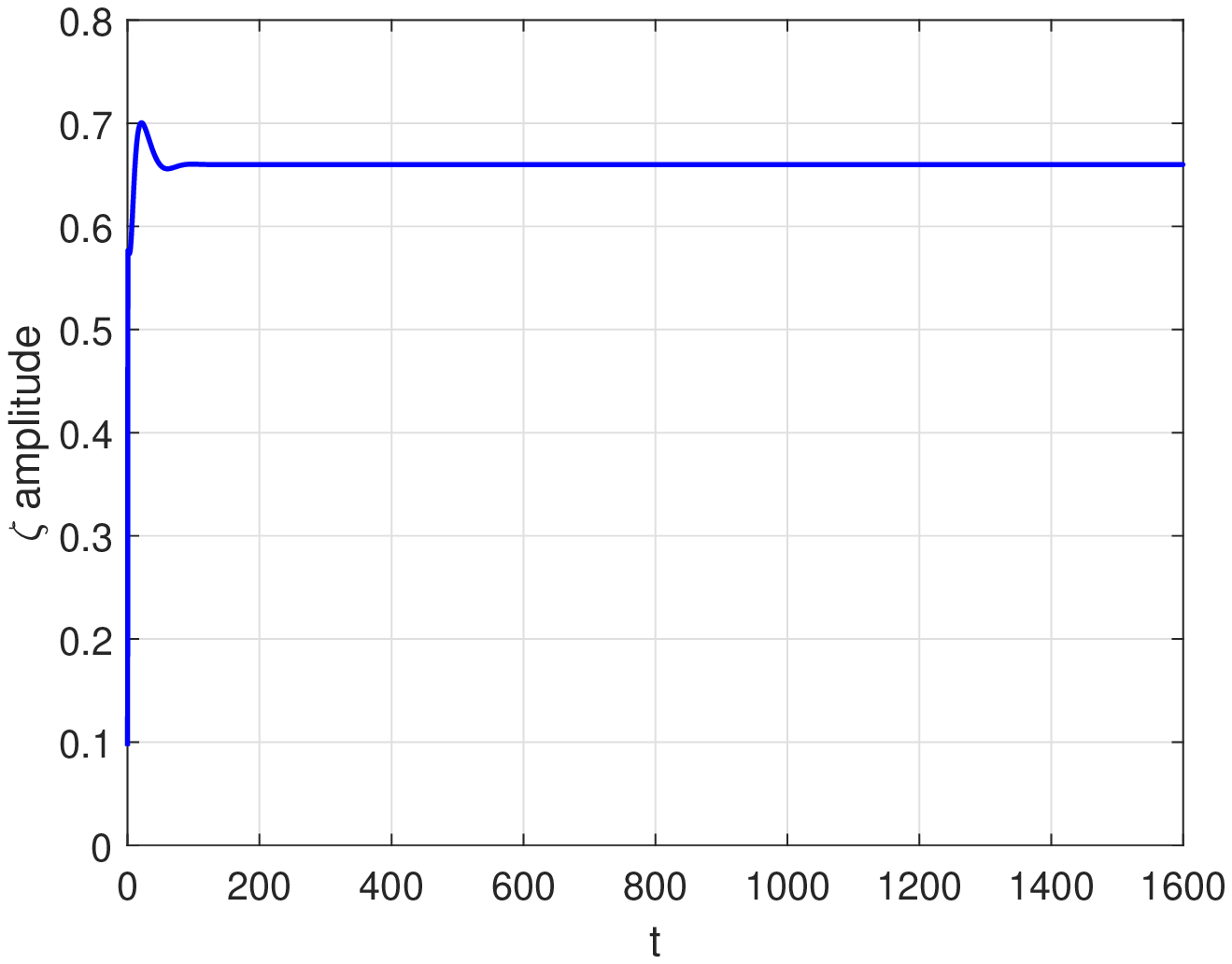}}
\subfigure[]
{\includegraphics[width=6.2cm]{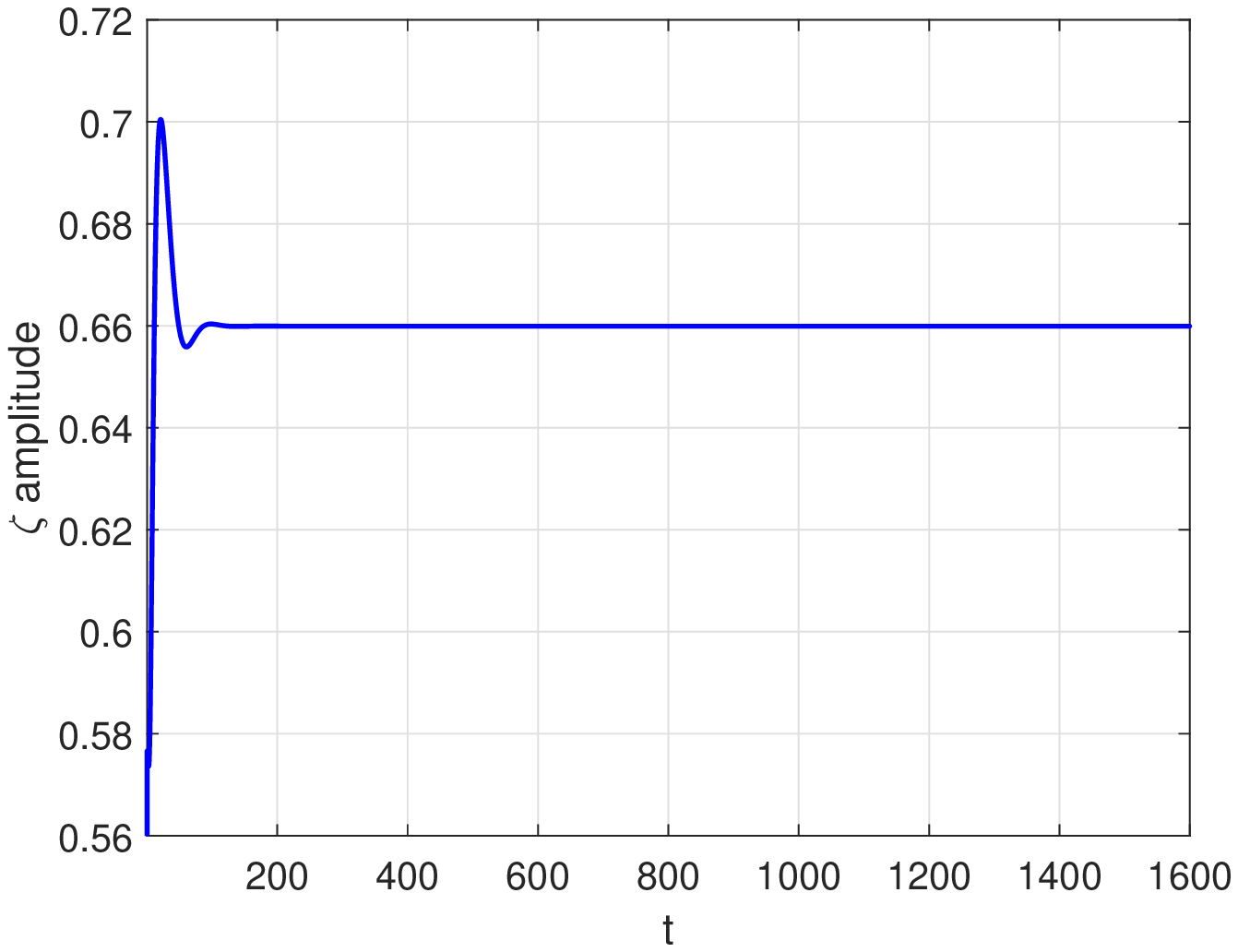}}
\caption{Amplitude and speed of the $\zeta$ component of the main pulse of the numerical solution from initial perturbed solitary wave profile with $A=6$.}
\label{Fig_13}
\end{figure}
This version of the resolution property is also observed from other initial conditions such as those of Gaussian type. This is illustrated in Figures \ref{Fig_14} and \ref{Fig_15}, which shows the evolution of an initial condition of the form $\zeta(x,0)=u(x,0)=Ae^{-\tau x^{2}}$ with $A=0.8$ and $\tau=0.01$. The initial Gaussian profile evolves in a similar way to that of Figure \ref{Fig_twelve}, breaking into a main solitary wave (cf. Figure \ref{Fig_16} for the evolution of its amplitude and speed) and additional solitary waves traveling to the right and to the left, of smaller amplitude (and then faster than the main one, cf. section \ref{sec3}) and tails which seem to be of dispersive nature, but that may evolve generating new solitary waves, see Figure \ref{Fig_15}. 
\begin{figure}[htbp]
\centering
\subfigure[]
{\includegraphics[width=\columnwidth]{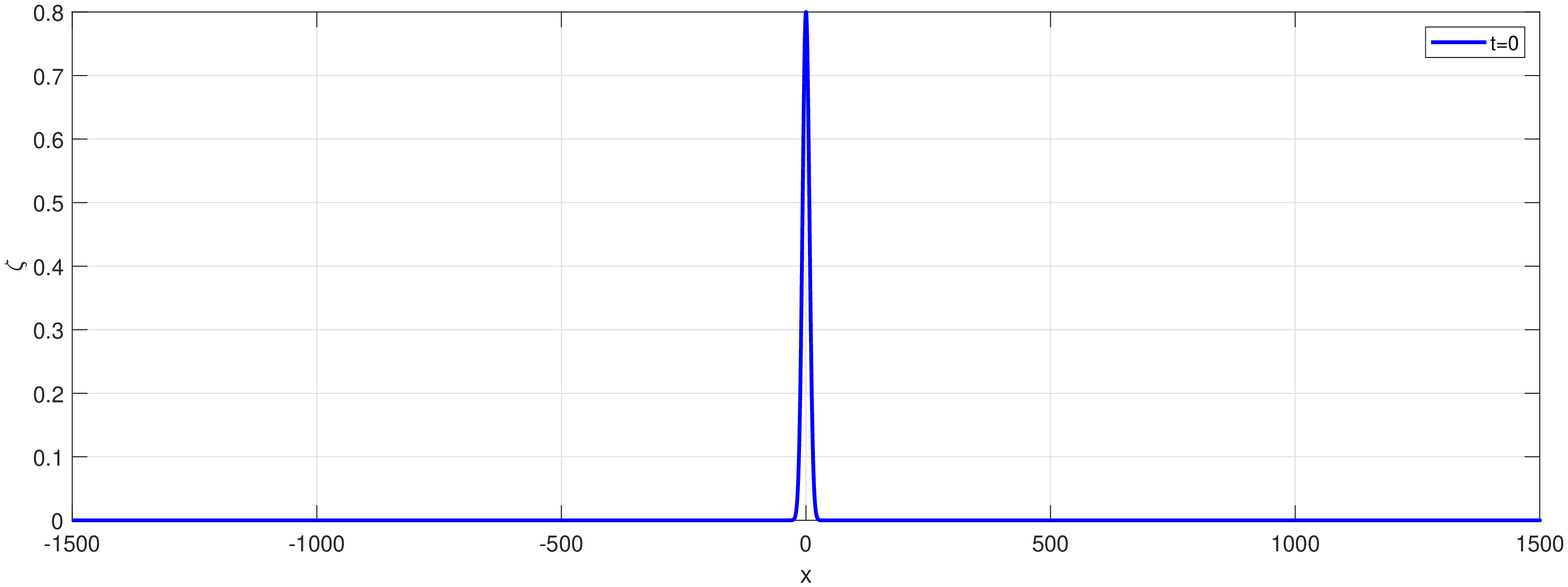}}
\subfigure[]
{\includegraphics[width=\columnwidth]{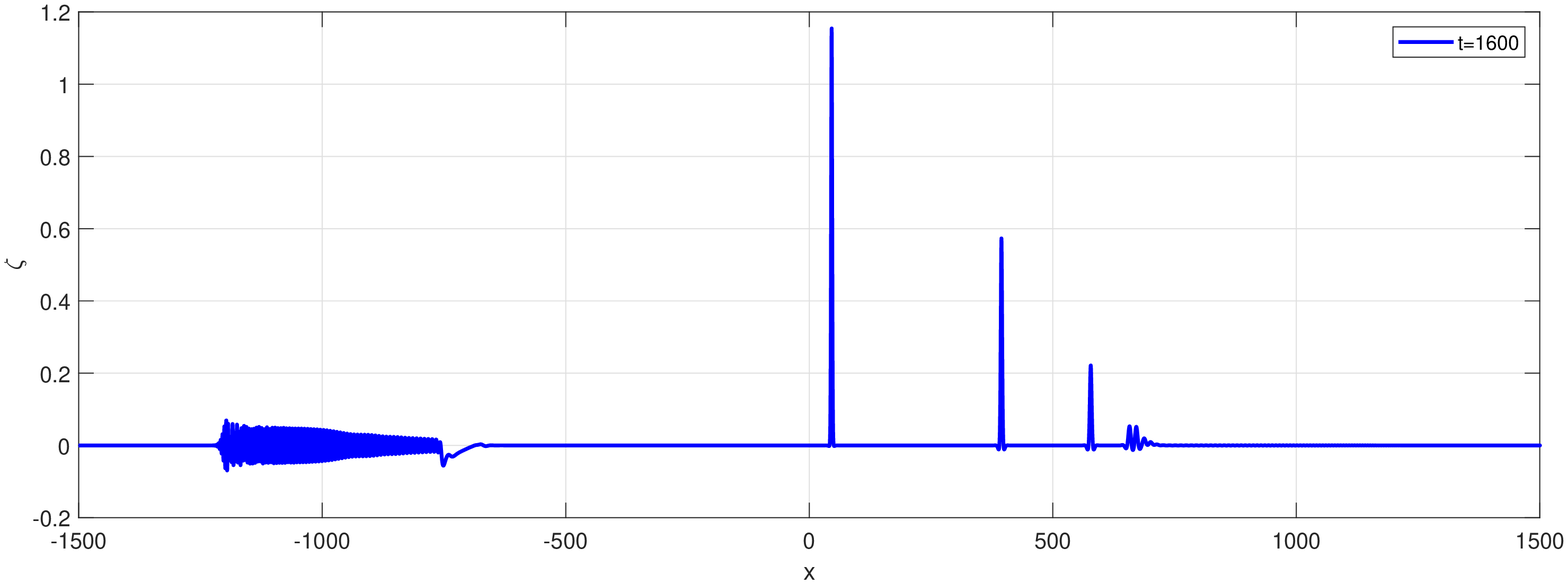}}
\subfigure[]
{\includegraphics[width=\columnwidth]{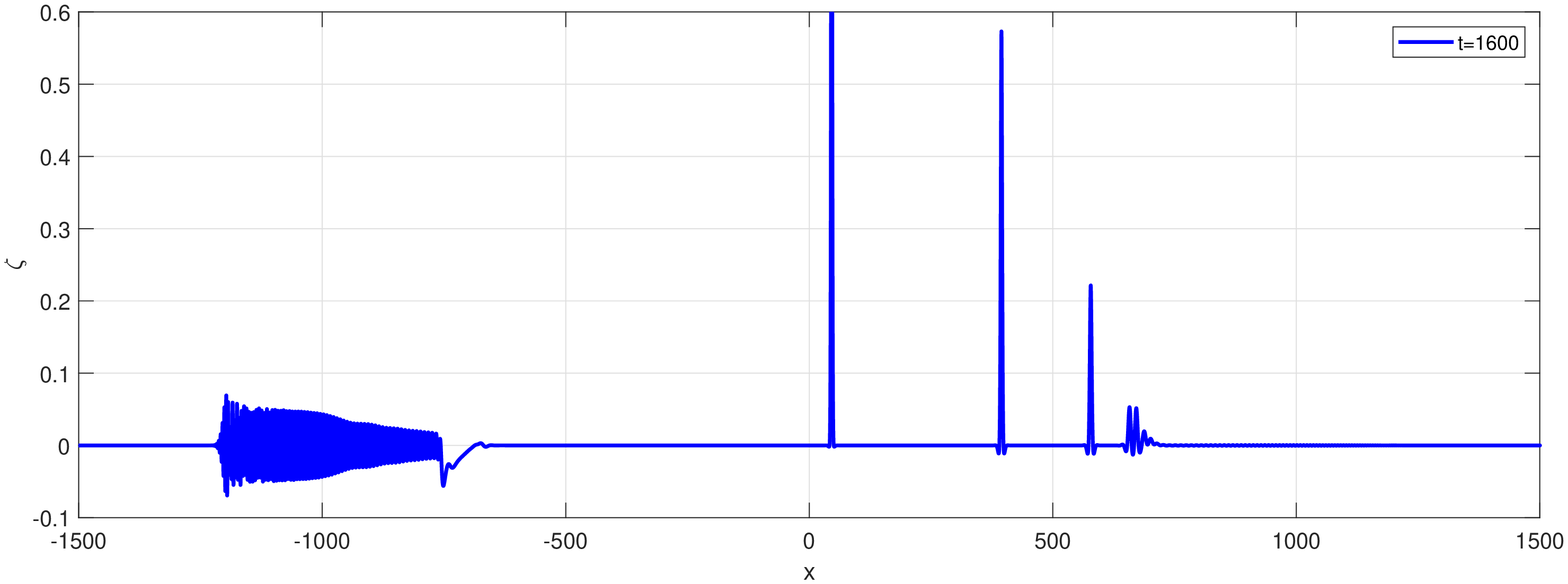}}
\caption{Evolution of the $\zeta$ component of the numerical solution from initial perturbed solitary wave profile with $A=0.8, \tau=0.01$. (a) $t=0$; (b) $t=1600$; (c) Magnification of (b).}
\label{Fig_14}
\end{figure}
\begin{figure}[htbp]
\centering
\subfigure[]
{\includegraphics[width=\columnwidth]{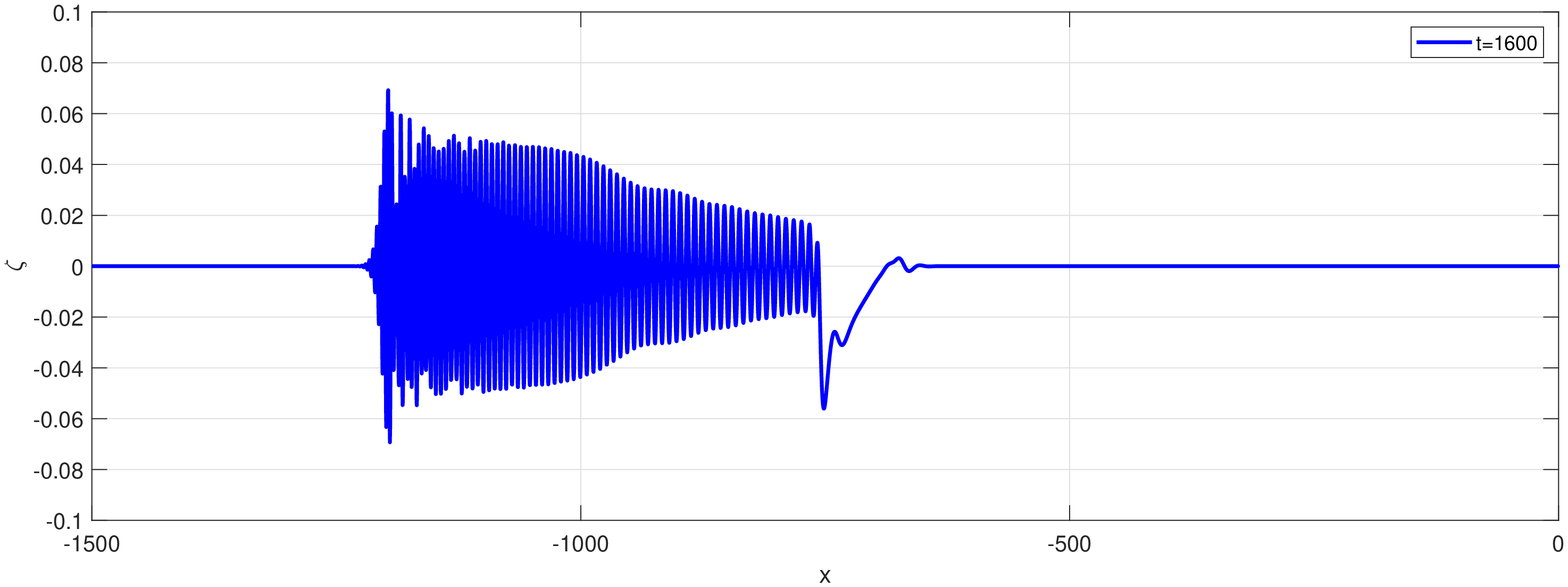}}
\subfigure[]
{\includegraphics[width=\columnwidth]{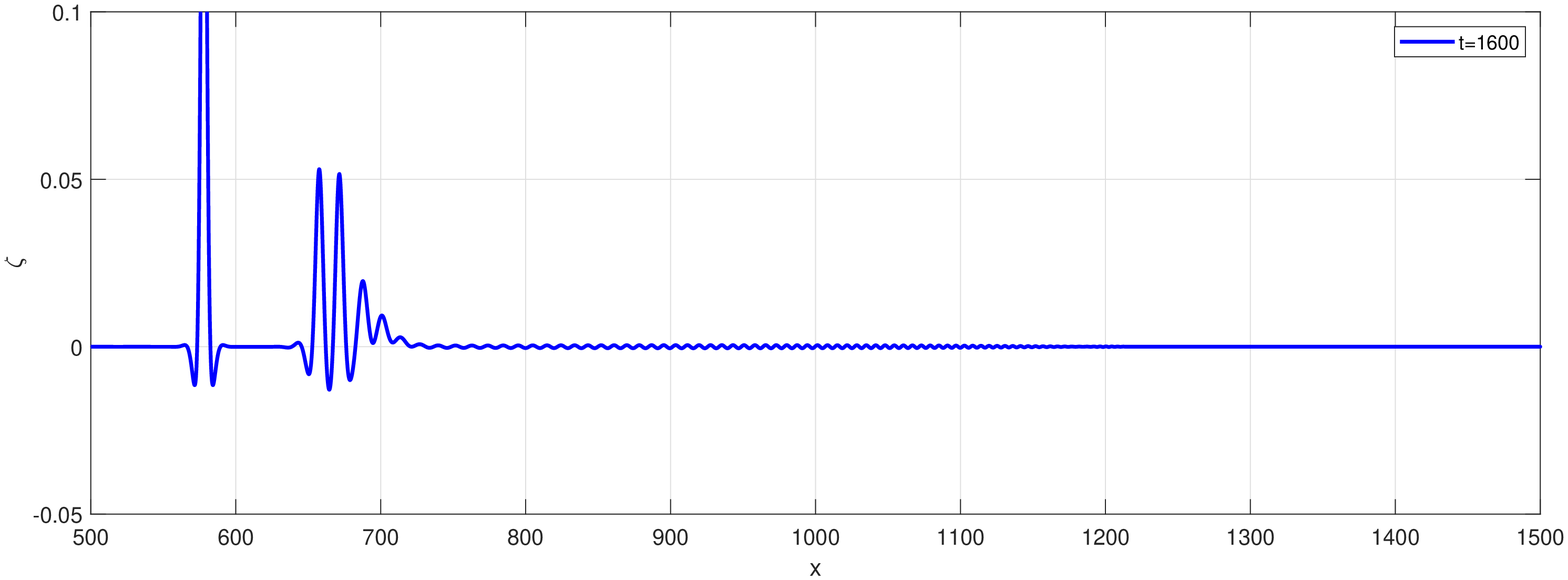}}
\caption{Evolution of the $\zeta$ component of the numerical solution from initial perturbed solitary wave profile with $A=0.8, \tau=0.01$. Magnifications of Figure \ref{Fig_14}(c).}
\label{Fig_15}
\end{figure}
\begin{figure}[htbp]
\centering
\subfigure[]
{\includegraphics[width=6.2cm]{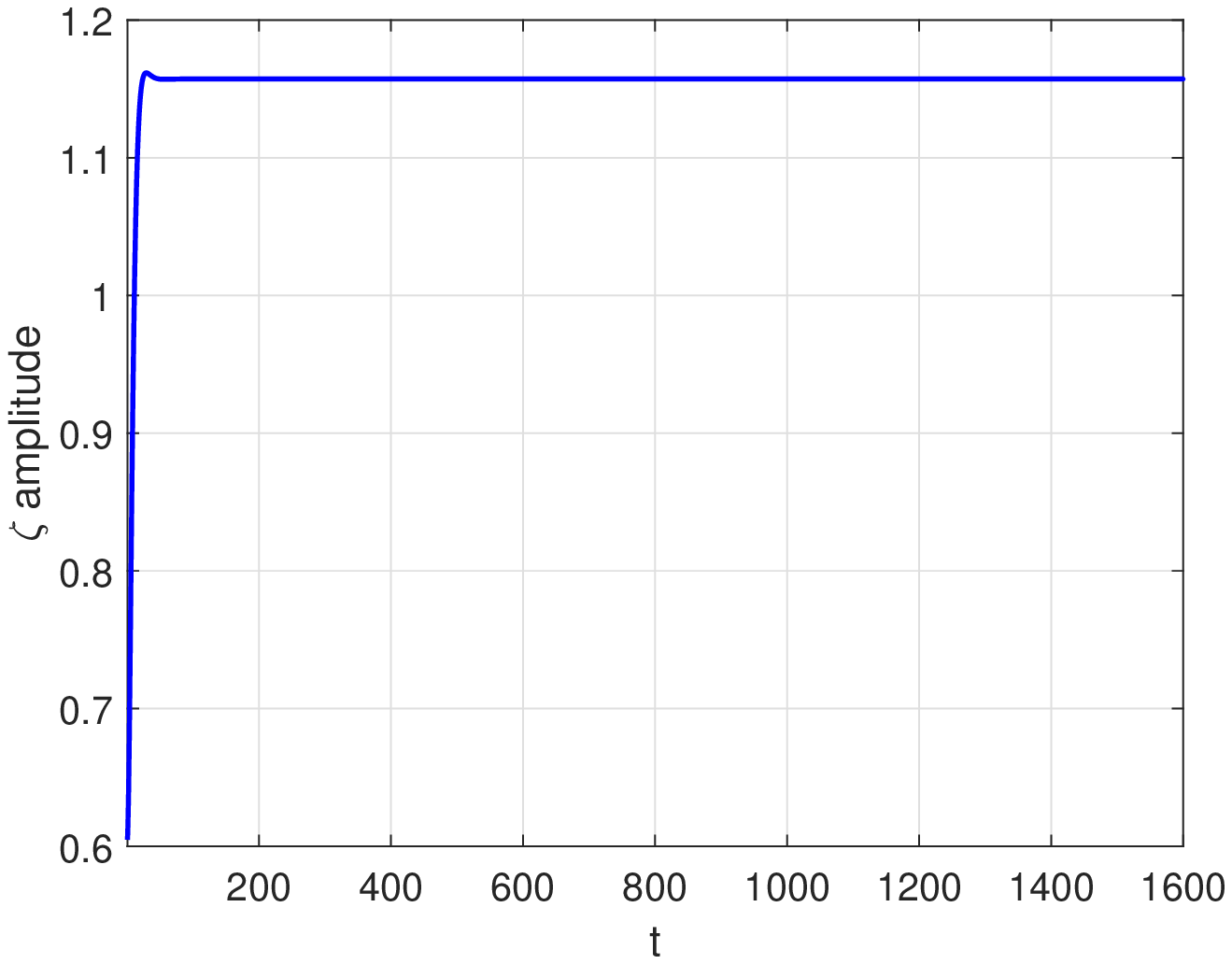}}
\subfigure[]
{\includegraphics[width=6.2cm]{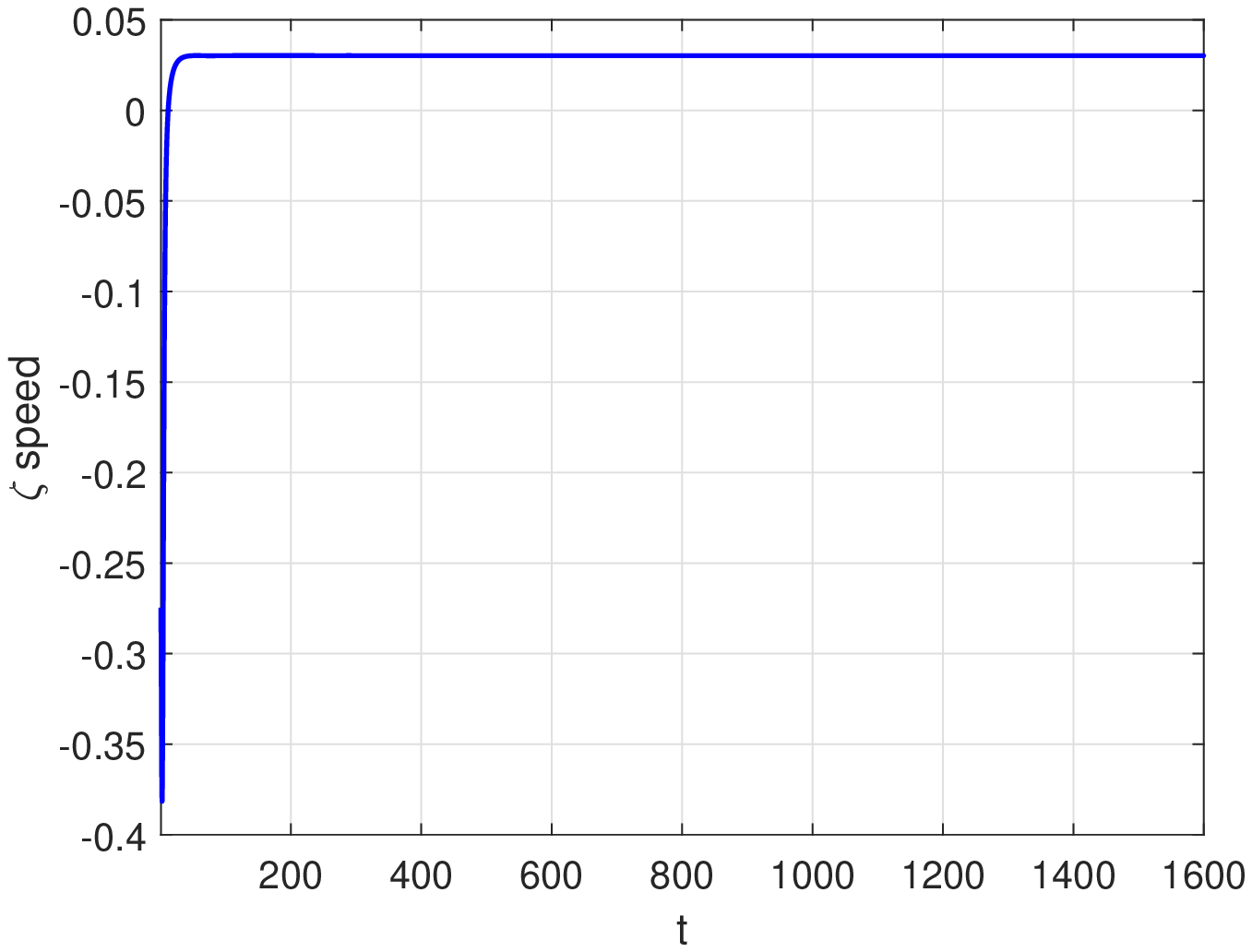}}
\caption{Amplitude and speed of the $\zeta$ component of the main pulse of the numerical solution of Figure \ref{Fig_14}.}
\label{Fig_16}
\end{figure}
It seems that this type of resolution property occurs from small amplitude Gaussian pulses. It was not observed when $A$ is large enough.
\subsection{Solitary wave interactions}
An additional point on the dynamics of (\ref{BFD1}), illustrated in this computational study, concerns the interaction of solitary wave solutions. The bi-directional character of the model invites to analyze both head-on and overtaking collisions, depicted in Figures \ref{Fig_17} and \ref{Fig_18} (in the first case) and in Figures \ref{Fig_19} and \ref{Fig_20} (in the second case). Figure \ref{Fig_17} shows the evolution of an initial data consisting of two solitary wave profiles generated with speeds $c_{s}^{(1)}=0.1$ (traveling to the right) and $c_{s}^{(2)}=0.2$ (traveling to the left), and centered at $x_{0}^{(1)}=-20$ and $x_{0}^{(2)}=20$ respectively. By the time $t=200$, the profiles interact inelastically, generating two modified solitary waves with different speeds and amplitudes to the corresponding partners before the collision. For the experiment at hand, both waves reduces their amplitude after the interaction; from $0.982658$ to $0.971565$ in the case of the taller wave and from $0.709789$ to $0.701182$ for the second wave. According to the speed-amplitude relation shown in section \ref{sec3}, the corresponding speeds increase slightly. This is illustrated in Figure \ref{Fig_18} for the taller (slower) wave.
\begin{figure}[htbp]
\centering
\subfigure[]
{\includegraphics[width=\columnwidth]{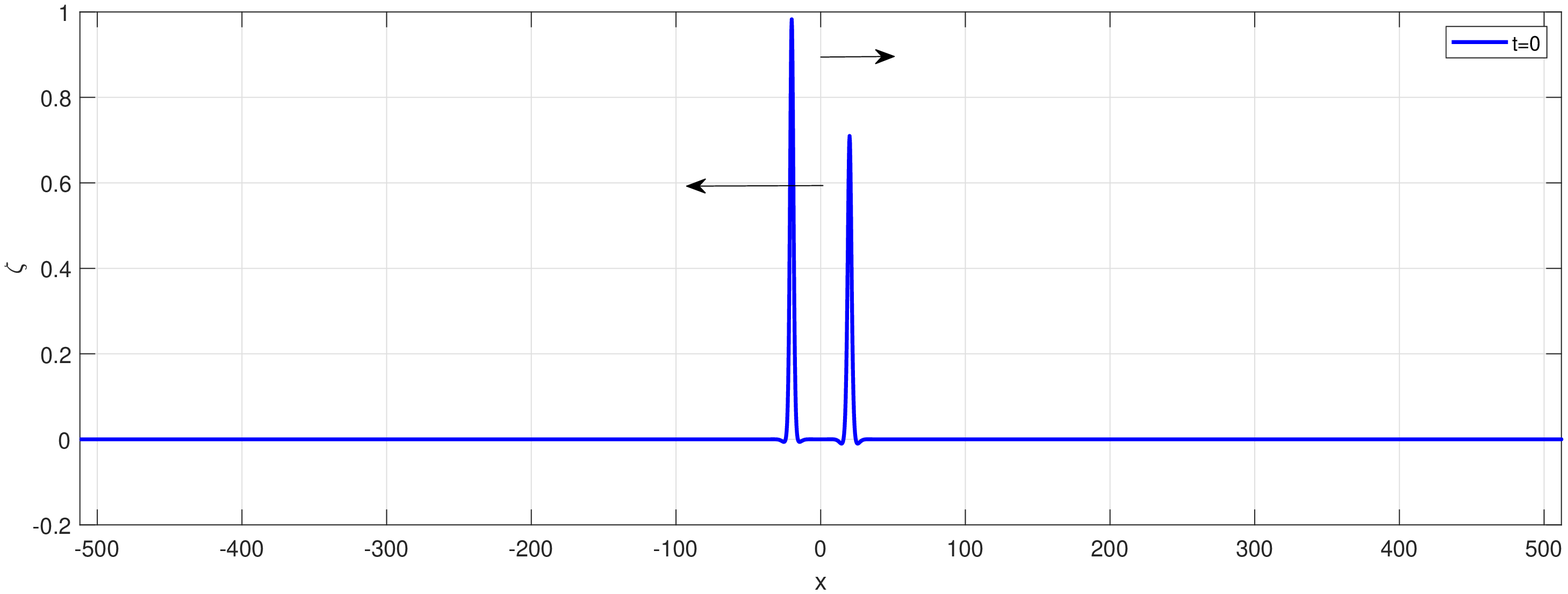}}
\subfigure[]
{\includegraphics[width=\columnwidth]{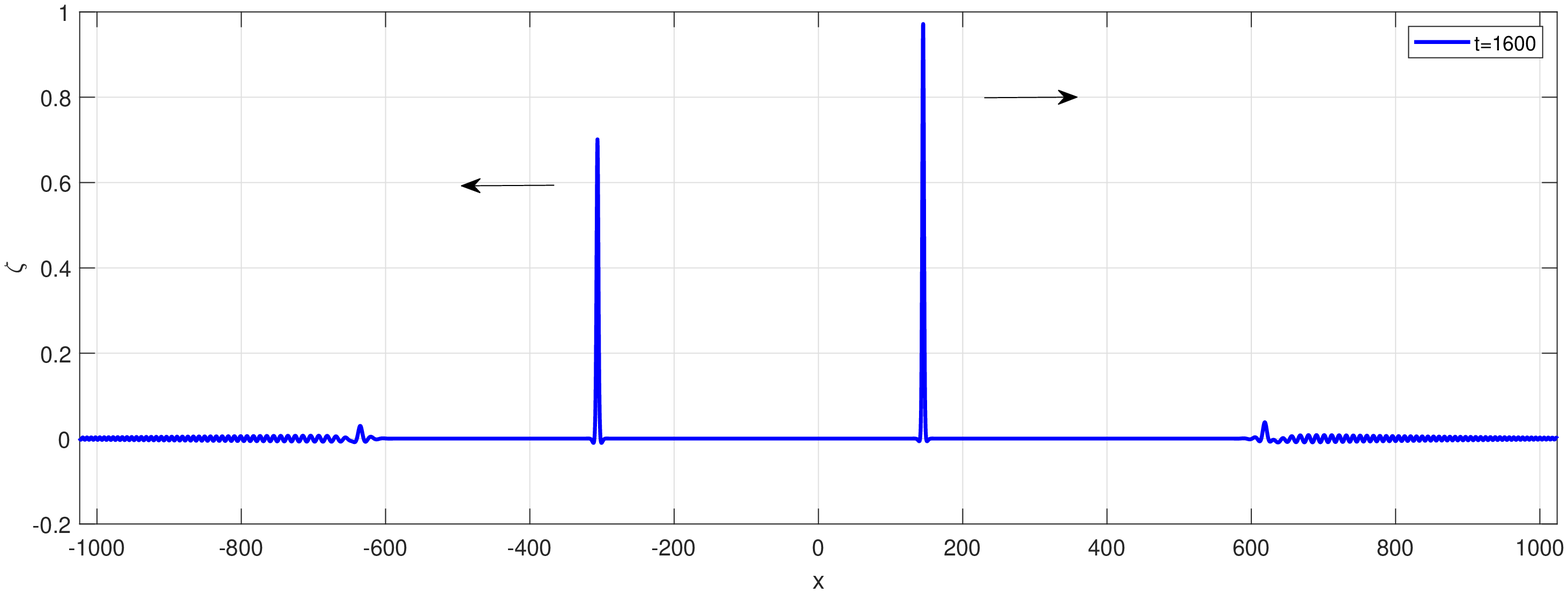}}
\subfigure[]
{\includegraphics[width=\columnwidth]{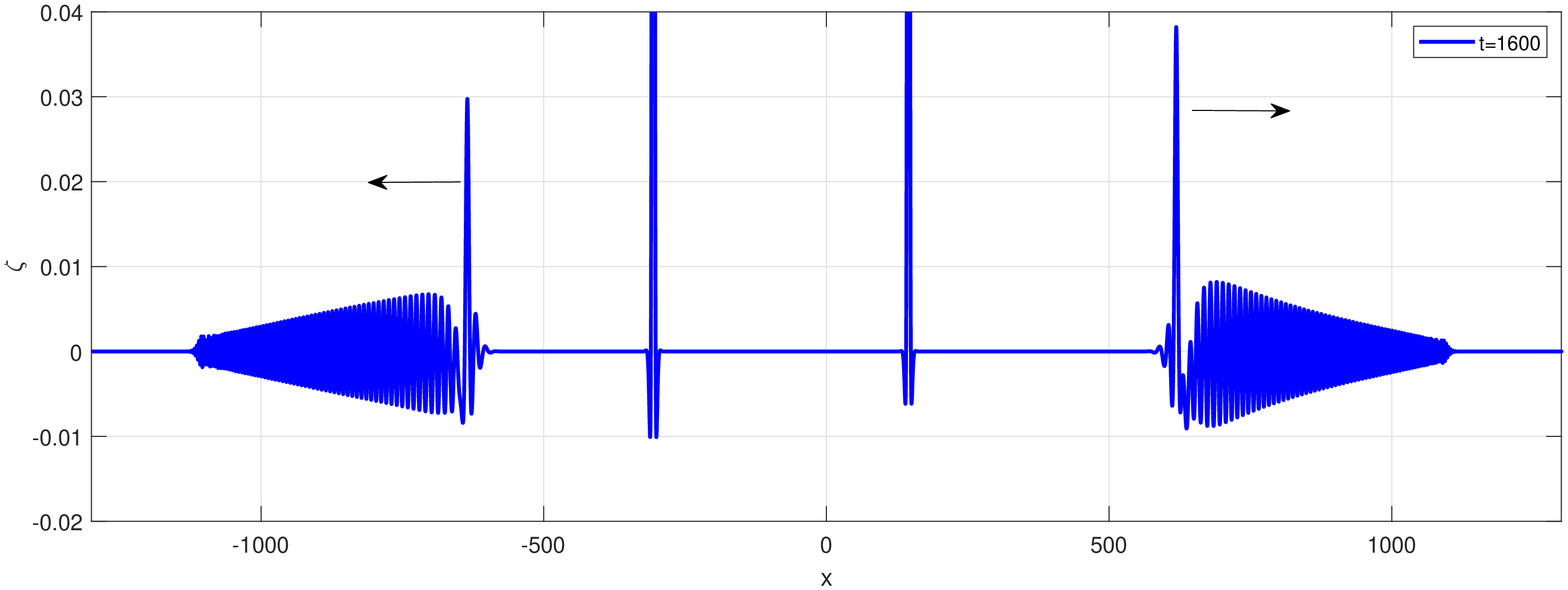}}
\caption{Head-on collision: evolution of the $\zeta$ component of the numerical solution from the superposition of two solitary wave profiles with speeds $c_{s}^{(1)}=0.1, c_{s}^{(2)}=0.2$. (a) $t=0$; (b) $t=1600$; (c) Magnification of (b).}
\label{Fig_17}
\end{figure}

A second consequence of the inelastic interaction is the formation of additional waves in front of each of these emerging waves. Both seem to develop, as time goes by, a similar behaviour, with the generation of a wave of solitary type (faster, with smaller amplitude and more nonmonotone decay, cf. section \ref{sec3}), and some dispersive tail.
\begin{figure}[htbp]
\centering
\subfigure[]
{\includegraphics[width=6.2cm]{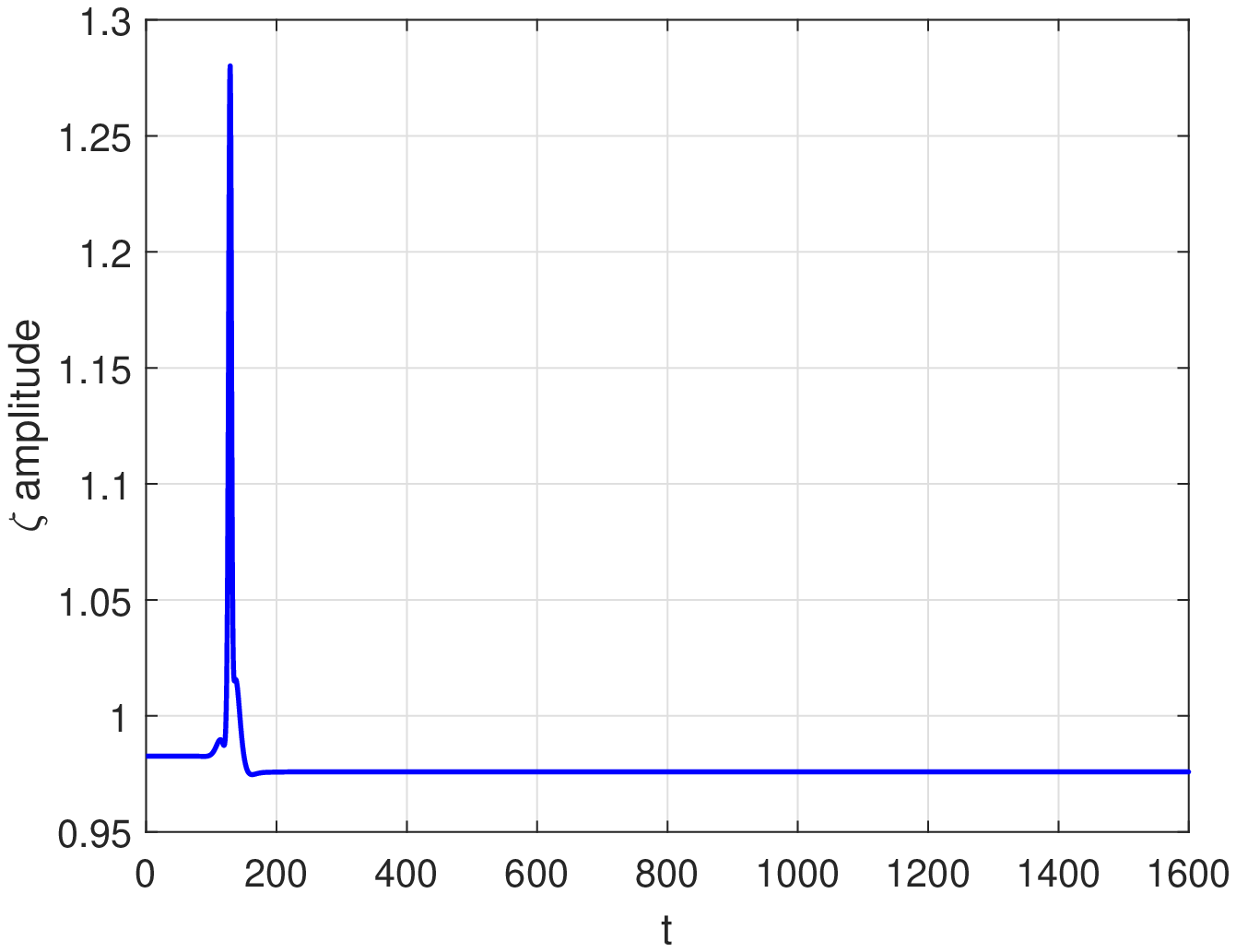}}
\subfigure[]
{\includegraphics[width=6.2cm]{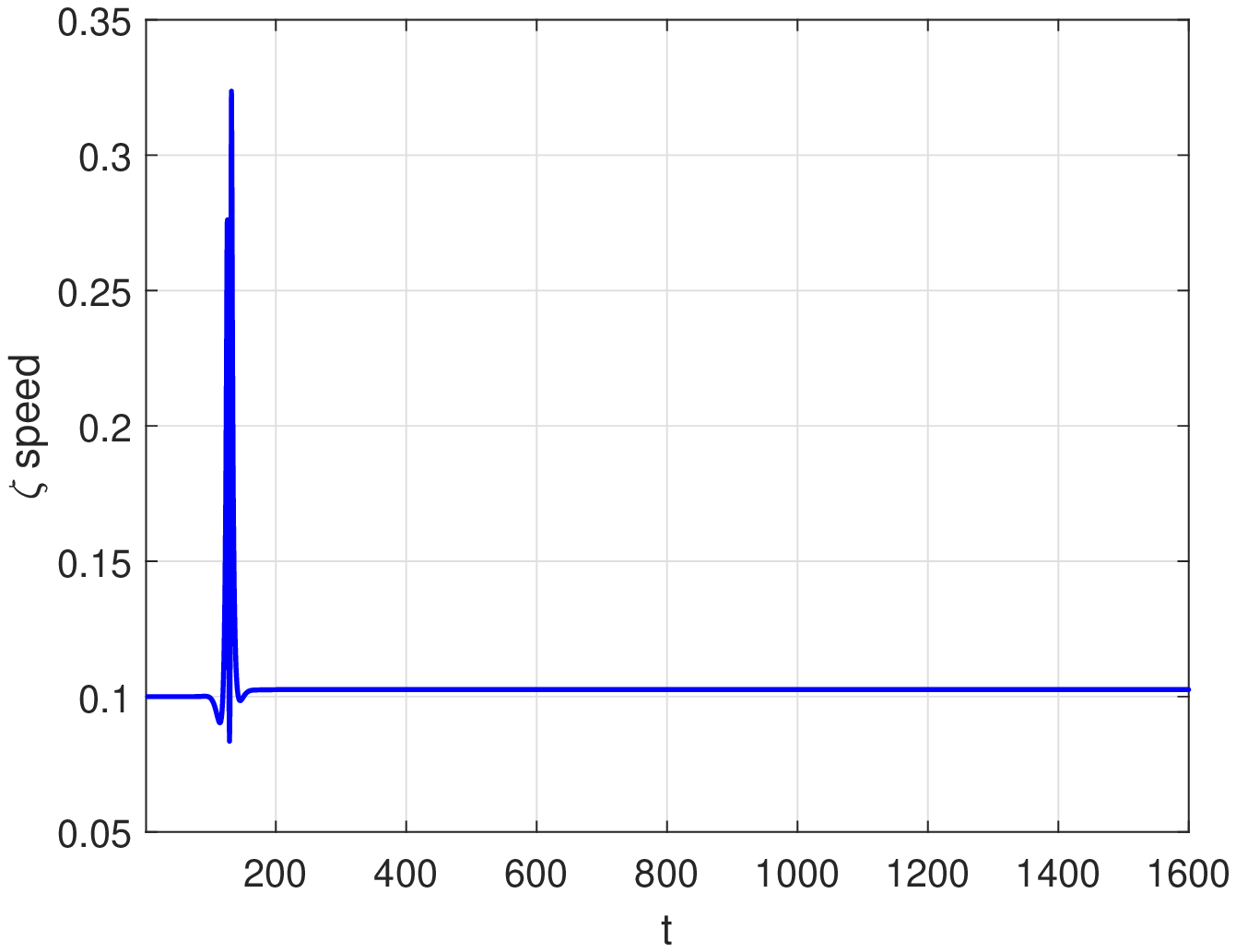}}
\caption{Amplitude and speed of the $\zeta$ component of the taller pulse of the numerical solution of Figure \ref{Fig_17}.}
\label{Fig_18}
\end{figure}

A similar pattern is observed in the case of overtaking collisions. The dynamics is represented in Figure \ref{Fig_19}. In this case, the initial condition is a superposition of the waves of speeds  $c_{s}^{(1)}=0.1, c_{s}^{(2)}=0.2$, but both traveling to the right and centered at $x_{0}^{(1)}=20$ and $x_{0}^{(2)}=-20$ respectively. By $t=200$, the second wave overtakes the first one. As a consequence two modified solitary waves emerge. For this experiment, the taller wave increases its amplitude, from  $0.982658$ to $0.983337$ (and therefore the emerging wave is slightly slower, see Figure \ref{Fig_20}), while the shorter wave reduces its amplitude after the collision from $0.709789$ to $0.707239$ (and then it goes slightly faster).
\begin{figure}[htbp]
\centering
\subfigure[]
{\includegraphics[width=\columnwidth]{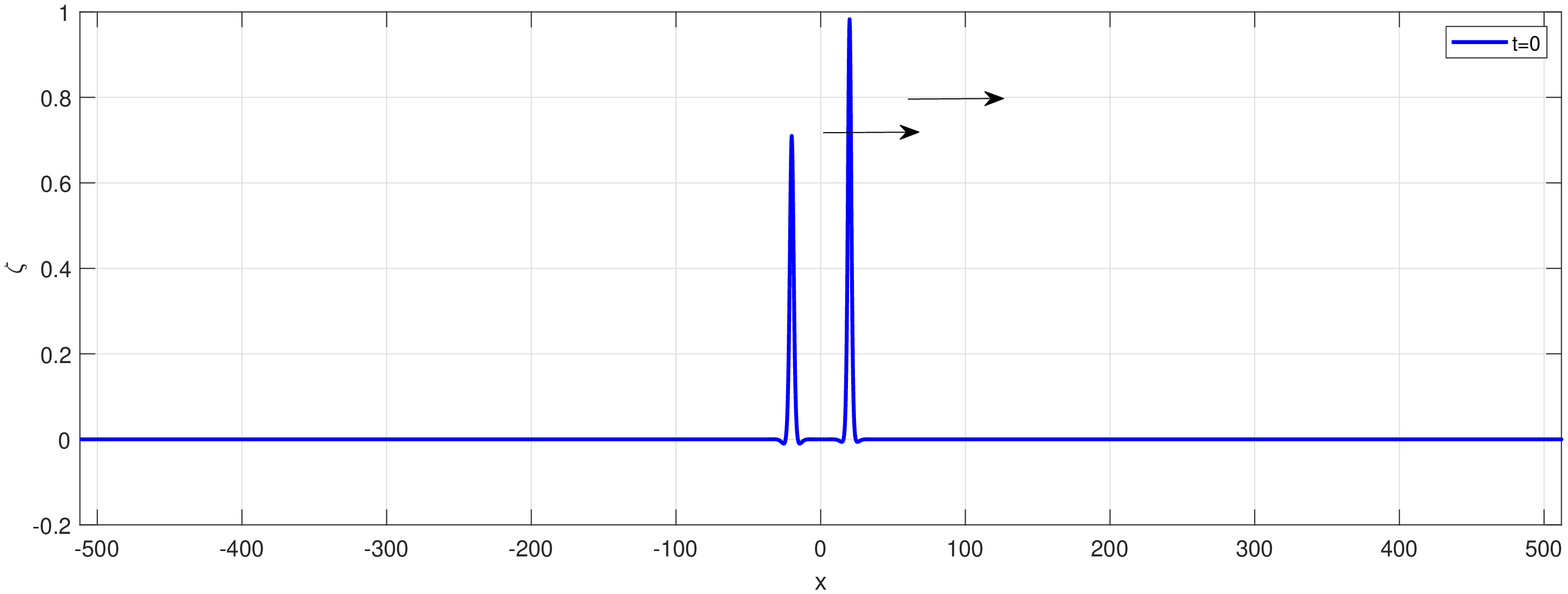}}
\subfigure[]
{\includegraphics[width=\columnwidth]{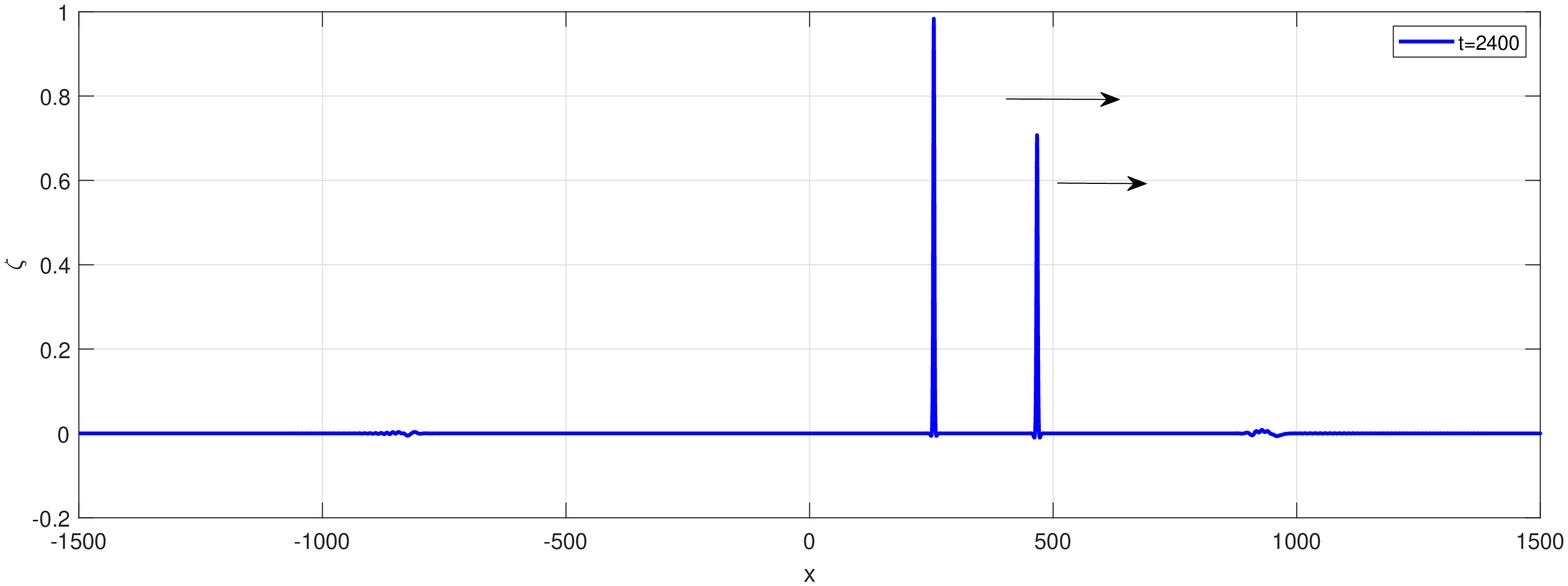}}
\subfigure[]
{\includegraphics[width=\columnwidth]{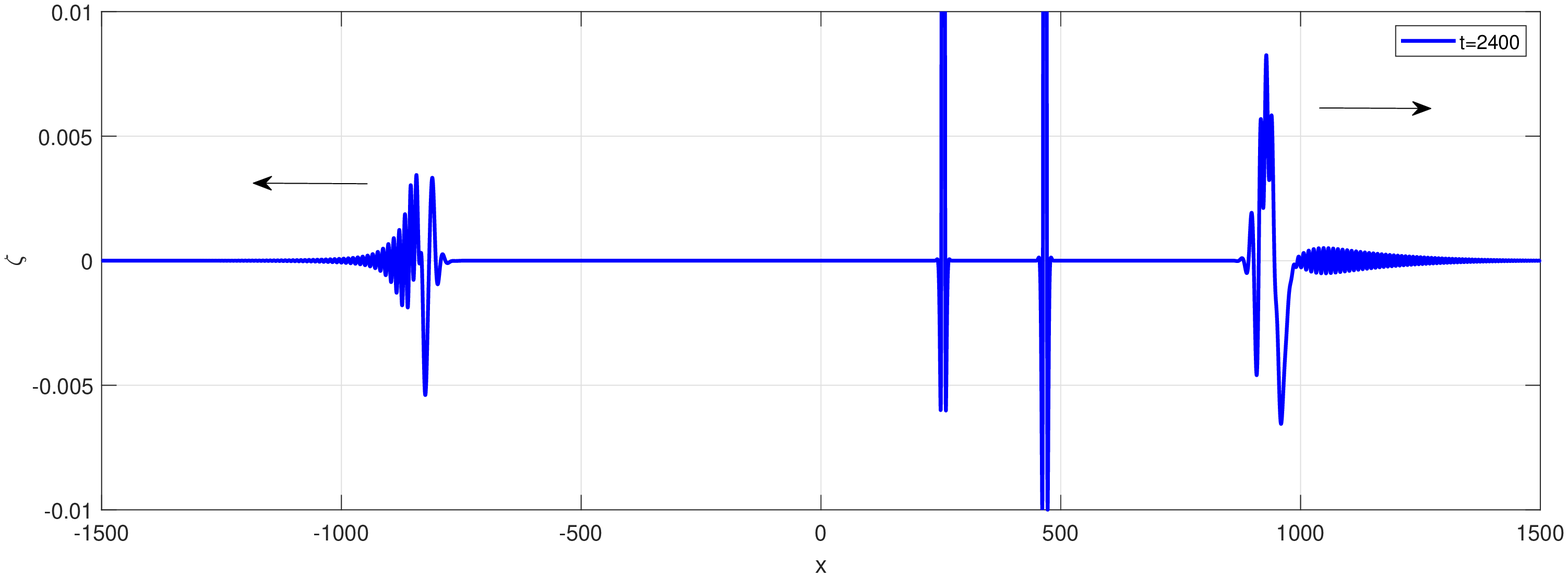}}
\caption{Overtaking collision: evolution of the $\zeta$ component of the numerical solution from the superposition of two solitary wave profiles with speeds $c_{s}^{(1)}=0.1, c_{s}^{(2)}=0.2$. (a) $t=0$; (b) $t=1600$; (c) Magnification of (b).}
\label{Fig_19}
\end{figure}

As in the head-on collision case, two new structures behind the taller wave (traveling to the left) and in front of the second one (traveling to the right) appear. The dispersive tails seem to evolve along with nonlinear wavelets which may hide a solitary form that requires a longer time to be formed.
\begin{figure}[htbp]
\centering
\subfigure[]
{\includegraphics[width=6.2cm]{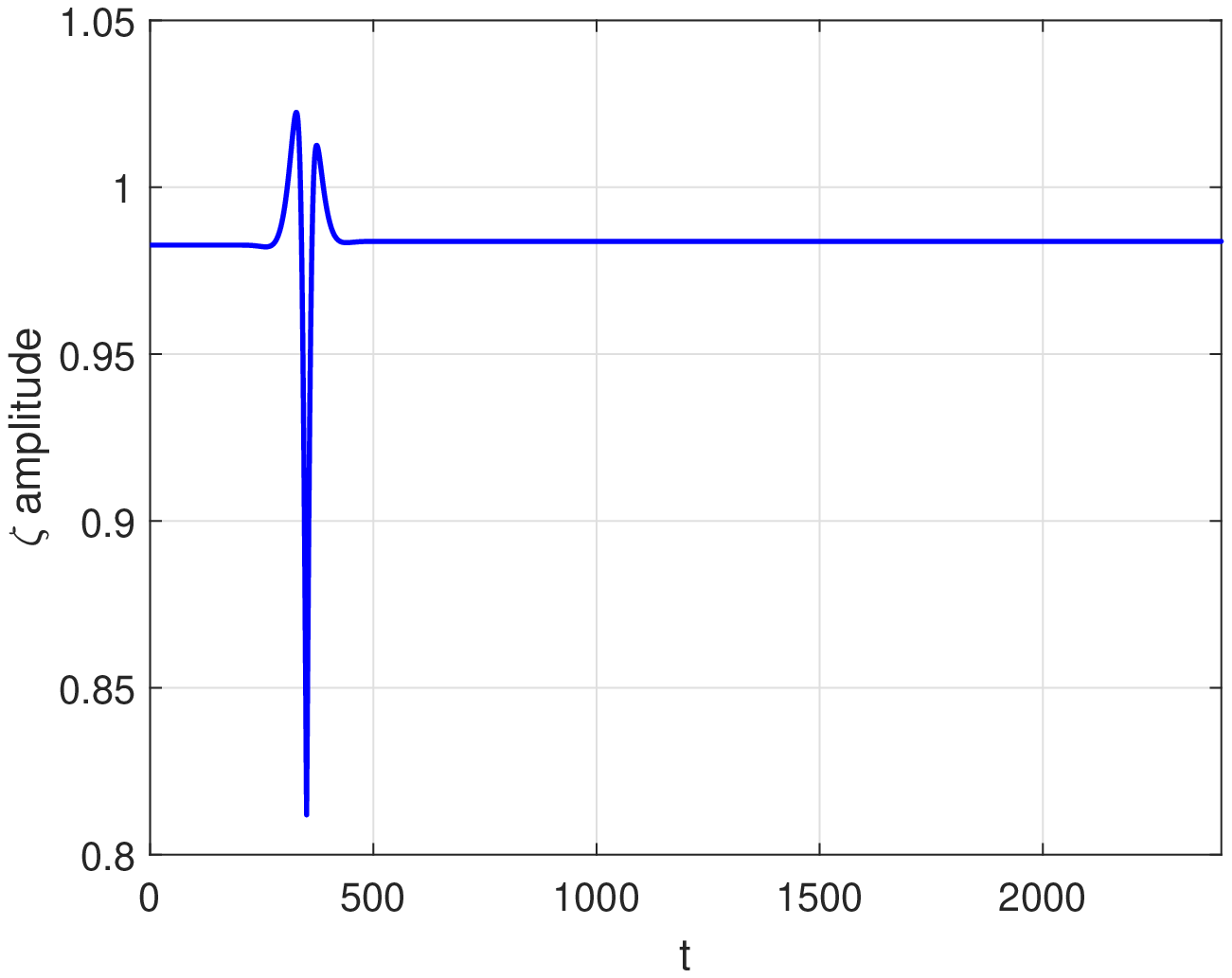}}
\subfigure[]
{\includegraphics[width=6.2cm]{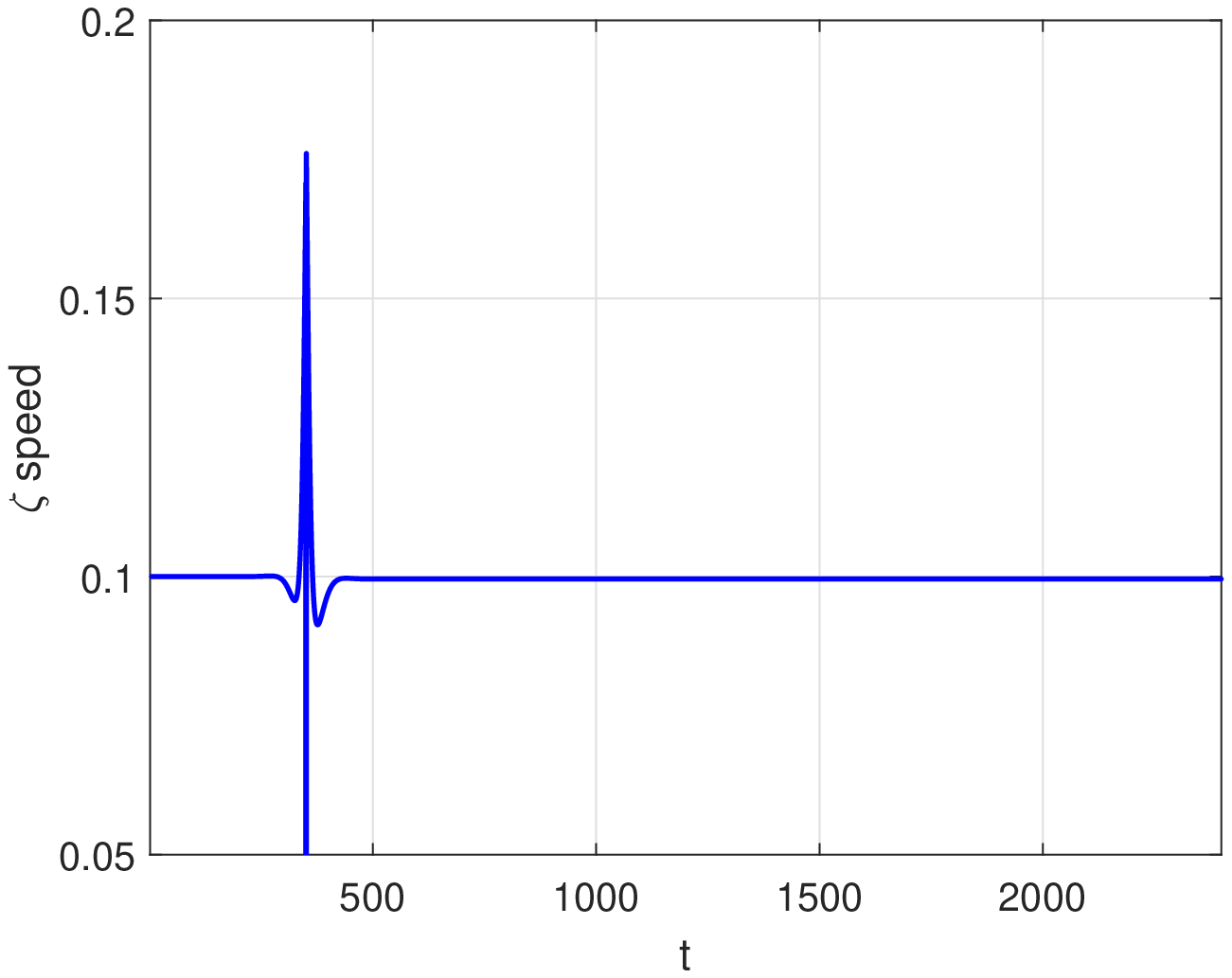}}
\caption{Amplitude and speed of the $\zeta$ component of the taller pulse of the numerical solution of Figure \ref{Fig_19}.}
\label{Fig_20}
\end{figure}

\section{Concluding remarks}
\label{sec5}
The present paper is devoted to study some properties of the so-called Boussinesq-Full Dispersion (BF/D) systems (\ref{BFD}). They are introduced as a model for the two-way propagation of internal waves alomg the interface of a two-layer system of fluids with rigid-lid condition for the upper layer and under the assumptions of a Boussinesq type physical regime for the upper layer and a full-dispersion regime for the lower one, \cite{BLS2008}. The system depends on four modelling parameters $a, b, c, d$. In Section \ref{sec2}, we approximate the corresponding periodic ivp by a Fourier-Galerkin spectral method and analyze the convergence of the semidiscretization. We establish $L^{2}$ error estimates for three cases of B/FD systems: the BBM-BBM case (corresponding to the values $b, d>0, a=c=0$), the generic case ($b,d>0, a,c<0$), and a third case with $b,d>0$ and either $a=0, c<0$ or $a<0, c=0$. In the three cases, the error estimates depend on the regularity of the solution of the periodic ivp, in the sense that, if the solution belongs to $H^{\mu}\times H^{\mu}, \mu\geq 1$, then the $L^{2}$ error behaves like $O(N^{-\mu})$, being $N$ the degree of the Fourier trigonometric approximation. In particular, for smooth solutions, spectral convergence holds.

Section \ref{sec3} is devoted to the existence and numerical generation of solitary wave solutions of (\ref{BFD}). We first describe the numerical technique of computation of solitary wave profiles, based on the approximation to the ode system for the solitary waves with periodic boundary conditions using a Fourier collocation scheme and the iterative resolution, via Petviashvili's method with extrapolation, of the resulting algebraic system for the discrete Fourier coefficients of the approximation. With this technique, we illustrate the main theoretical result about the existence of solitary wave solutions of (\ref{BFD}) by Angulo-Pava and Saut, \cite{AnguloS2019}, which concerns the Hamiltonian case $b=d>0, a,c\leq 0$. Then we observe that the numerical method generates approximate solitary waves beyond the hypotheses for the existence established in \cite{AnguloS2019}. This motivated us to study new conditions that might fit the numerical experiments. A modification of the proof of \cite{AnguloS2019} leads, in the case $b=d>0, a\leq 0, c<0$, to the determination of a limit value $c_{\gamma}$ such that a solitary wave of speed $c_{s}$ with $|c_{s}|<c_{\gamma}$ exists. The result is proved in Appendix \ref{appendixA}. As for the speed-amplitude relation, it is shown numerically that the amplitude of the solitary waves is an increasing function of $c_{\gamma}-c_{s}$. Additional computations in Section \ref{sec3} suggest the existence of some speed limit also in the nonhamiltonian case ($b\neq d$).

In Section \ref{sec4} we study computationally some aspects of the dynamics of solitary wave solutions of (\ref{BFD}). We first describe the numerical method to approximate the periodic ivp of (\ref{BFD}). This is based on the Fourier-Galerkin spectral discretization in space, analyzed in Section \ref{sec2}, and formulated here in the equivalent collocation form. (This was done to take advantage of the numerical technique of generation of solitary wave profiles introduced in Section \ref{sec3}.) After proving several properties of the semidiscrete system, concerning the preservation of invariants and a Hamiltonian structure when $b=d$, we introduce the fully discrete scheme that will be used in the computational study of the dynamics. The ode semidiscrete system is integrated in time by a fourth-order RK Composition method base on the implicit midpoint rule. The scheme was shown, theoretically and computationally, to be efficient when approximating nonlinear dispersive wave equations, \cite{FrutosS1992,DD,DDS1}. Some experiments of validation with computed solitary waves confirm here this accuracy.

The computational study of the dynamics of solitary waves presented in Section \ref{sec4} analyzes their stability from several points of view. Focused on the Hamiltonian case, the first experiments suggest the asymptotic stability of the solitary waves, in the sense that small perturbations of a solitary wave profile evolve into the generation of a main wave, which tends asymptotically to a modified solitary wave, accompanied by small-amplitude dispersive tails behind and in front of it. The existence of these two dispersive groups is analyzed from the linearized system of (\ref{BFD}). For larger perturbations, it is worth mentioning that the perturbed initial solitary wave profile may evolve generating a main wave of solitary type along with other waves, traveling to the left and to the right. A longer evolution reveals that these waves consist of nonlinear structures of solitary form and dispersive tails in front of them. This sort of resolution property is also observed experimentally when dealing with other initial conditions, such as small-amplitude Gaussian pulses and superpositions of solitary waves, traveling to the same or to opposite directions. In this last case, which concerns the dynamics of head-on and overtaking collisions, this resiolution seems to be slower;  their inelastic character may generate the formation of solitary-wave structures, additional to those main emerged after the collision, but requiring to this end a longer time than in other cases.


\section*{Acknowledgements}
The authors are supported by the Spanish Agencia Estatal de Investigaci\'on under Research Grant PID2020-113554GB-I00/AEI/10.13039/501100011033. They
would like to acknowledge travel support, that made possible this collaboration, from the Institute of Applied and Computational Mathematics of FORTH and the Institute of Mathematics (IMUVA) of the University of Valladolid.
Angel Dur\'an is also supported by the Junta de Castilla y Le\'on and FEDER funds (EU) under Research Grant VA193P20. 
Leetha Saridaki is also supported by the grant \lq\lq Innovative Actions in Environmental Research and Development (PErAn)\rq\rq (MIS5002358), implemented under the \lq\lq Action for the strategic development of the Research and Technological sector'' funded by the Operational Program \lq\lq Competitiveness, and Innovation'' (NSRF 2014-2020) and co-financed by Greece and the EU (European Regional Development Fund). The grant was issued to the Institute of Applied and Computational Mathematics of FORTH.

\begin{appendices}
\section{Appendix}
\label{appendixA}
In this appendix, the emergence of a speed limit for the existence of solitary wave solutions of (\ref{BFD}), suggested by some numerical experiments in section \ref{sec3}, is justified by some theoretical arguments. They are based on the analysis made in \cite{AnguloS2019}, where existence results are derived from the application of the Concentration-Compactness theory, \cite{Lions}, to the ode system for the solitary wave profiles given by (\ref{BFD2}), and where the corresponding version in terms of the Fourier transform is given in (\ref{BFD2b}), (\ref{BFD2c}).

Let $\gamma\in (0,1), \epsilon>0, b=d>0, a,c\leq 0$. Our starting point is the minimization problems considered in \cite{AnguloS2019}
\begin{equation}\label{appe1}
I_{\lambda}={\rm inf}\{E_{\mu_{2}}(\zeta,u): (\zeta,u)\in H^{1}\times H^{1}, F(\zeta,u)=\lambda\},
\end{equation}
for $\lambda>0$ and the functionals
\begin{eqnarray}
E_{\mu_{2}}(\zeta,u)&=&\int_{-\infty}^{\infty}\left(\frac{(1-\gamma)}{2}\zeta {J}_{c}\zeta+\frac{1}{2}u\mathcal{L}_{\mu_{2}}u-c_{s}\zeta J_{b}u\right)dx,\label{appe2}\\
F(\zeta,u)&=&\frac{\epsilon}{2\gamma}\int_{-\infty}^{\infty}\zeta u^{2}dx.\nonumber
\end{eqnarray}
The key to the proof given in \cite{AnguloS2019}, and which determines the conditions on the speed $c_{s}$ for the existence of solitary waves, is the coercivity property of the energy functional (\ref{appe2}) (cf. Proposition 2.1(b) and Proposition 3.1(d) in \cite{AnguloS2019}). Our purpose here is to study this property by writing $E_{\mu_{2}}$ in the form
\begin{equation}\label{appe4}
E_{\mu_{2}}=\frac{1}{2}\langle Q\begin{pmatrix}\zeta\\u\end{pmatrix}, \begin{pmatrix}\zeta\\u\end{pmatrix}\rangle,
\end{equation}
where $\langle\cdot,\cdot\rangle$ denotes the inner product in $H^{1}\times H^{1}$ given by
\begin{equation*}
\langle \begin{pmatrix}\zeta_{1}\\u_{1}\end{pmatrix}, \begin{pmatrix}\zeta_{2}\\u_{2}\end{pmatrix}\rangle=\int_{-\infty}^{\infty}(\zeta_{1}\overline{\zeta_{2}}+u_{1}\overline{u_{2}})dx,
\end{equation*}
and $Q$ is the matrix operator with Fourier symbol
\begin{equation}\label{appe5}
\widehat{Q}(k)=\begin{pmatrix} (1-\gamma)j_{c}(k)&-c_{s}j_{b}(k)\\-c_{s}j_{b}(k)&l_{\mu_{2}}(k)\end{pmatrix},\quad k\in\mathbb{R}.
\end{equation}
We will use the Fourier representation (\ref{appe5}) to study the operator $Q$.
\begin{lemma}
\label{lemA1}
Under the hypotheses on the parameters above and for $x\geq 0$, we have
\begin{equation*}
l_{\mu_{2}}(x)\geq \frac{3}{4\gamma}.
\end{equation*}
\end{lemma}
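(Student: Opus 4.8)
The plan is to discard the term of $l_{\mu_2}$ that contains $a$, which is nonnegative by hypothesis, and to recognise what is left as a perfect square shifted by $3/(4\gamma)$. Concretely, expanding (\ref{BFD2c}) gives, for $k\neq 0$,
\begin{equation*}
l_{\mu_2}(k)=\frac{1}{\gamma}-\frac{\sqrt{\mu}}{\gamma^{2}}|k|\coth(\sqrt{\mu_2}|k|)-\frac{\mu a}{\gamma}k^{2}+\frac{\mu}{\gamma^{3}}k^{2}\coth^{2}(\sqrt{\mu_2}|k|),
\end{equation*}
and since $a\le 0$ the term $-\tfrac{\mu a}{\gamma}k^{2}$ is $\ge 0$, so $l_{\mu_2}(k)\ge \tfrac1\gamma-\tfrac{\sqrt{\mu}}{\gamma^{2}}|k|\coth(\sqrt{\mu_2}|k|)+\tfrac{\mu}{\gamma^{3}}k^{2}\coth^{2}(\sqrt{\mu_2}|k|)$. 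I would also note that the value $l_{\mu_2}(0)=\tfrac1\gamma-\tfrac{\sqrt\mu}{\gamma^{2}\sqrt{\mu_2}}+\tfrac{\mu}{\gamma^{3}\mu_2}$ given in (\ref{BFD2c}) is precisely the $|k|\to 0$ limit of this lower bound (using $y\coth y\to 1$), so it is enough to bound the displayed right-hand side from below on $(0,\infty)$.

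Next I would substitute $y=\sqrt{\mu_2}\,|k|\ge 0$ and set $s:=y\coth y$, recalling the elementary fact that $s\ge 1$ for all $y\ge 0$. Writing $\nu=\mu/\mu_2$, the lower bound becomes $\tfrac1\gamma\bigl(1-\tfrac{\sqrt\nu}{\gamma}s+\tfrac{\nu}{\gamma^{2}}s^{2}\bigr)=\tfrac1\gamma(1-w+w^{2})$ with $w:=\tfrac{\sqrt\nu}{\gamma}s$. Completing the square, $1-w+w^{2}=(w-\tfrac12)^{2}+\tfrac34\ge\tfrac34$, and multiplying through by $1/\gamma>0$ yields $l_{\mu_2}(x)\ge\frac{3}{4\gamma}$ for all $x\ge 0$, which is the assertion.

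I do not expect a genuine obstacle here; the argument is essentially one completion of the square. The only points that need care are: checking that the $a$-term really carries the favourable sign, so that it can simply be dropped — this is the sole place where $a\le 0$ is used (note that $c$, $\epsilon$ and the value of $b=d$ are irrelevant beyond positivity of $\gamma$); keeping the inequality $y\coth y\ge 1$ in mind so that no positivity is wasted when passing to the variable $s$; and handling $k=0$ through the stated limiting value, so that the single quadratic estimate covers the whole half-line $x\ge 0$.
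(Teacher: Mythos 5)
Your proof is correct and follows essentially the same route as the paper: after discarding the nonnegative $|a|$-term, you recognise the remainder as $\tfrac1\gamma(1-w+w^2)$ with $w$ proportional to $y\coth y$ and bound the quadratic below by its minimum $3/4$, which is exactly the paper's use of $P(z)=1-z+z^2\ge P(1/2)=3/4$.
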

\begin{proof}
We write $l_{\mu_{2}}(x)=l(\sqrt{\mu_{2}}x)$ where for $y\geq 0$
\begin{equation*}
l(y)=\frac{1}{\gamma}P\left(\frac{\alpha}{\gamma}g(y)\right)+\frac{\alpha^{2}}{\gamma}|a|y^{2},
\end{equation*}
with $g(0)=1$ and
\begin{equation*}
g(y)=y{\rm coth}{y}, y>0,\quad P(z)=1-z+z^{2}.
\end{equation*}
The polynomial $P$ attains a global minimum at $z=1/2$ and therefore $P(z)\geq P(1/2)=3/4, z\in\mathbb{R}$. Thus, for $y\geq 0$
\begin{equation*}
l(y)\geq \frac{3}{4\gamma}+\frac{\alpha^{2}}{\gamma}|a|y^{2},
\end{equation*}
and the lemma holds.
\end{proof}

For $x\geq 0$ we define the function
\begin{equation}\label{appe6}
R_{\gamma}(x)=\frac{(1-\gamma)j_{c}(x)l_{\mu_{2}}(x)}{j_{b}(x)^{2}}
\end{equation}
The coercivity property for $E_{\mu_{2}}$ is based on the following result on (\ref{appe6}):
\begin{lemma}
\label{lemA2} Let $\gamma\in (0,1), b=d>0, c<0,a\leq 0$. Then there exists $m=m(\gamma)>0$ such that
\begin{equation}\label{appe6b}
R_{\gamma}(x)\geq m,\quad \forall x\geq 0.
\end{equation}
\end{lemma}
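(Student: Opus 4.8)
The plan is to reduce (\ref{appe6b}) to an elementary estimate for a single‑variable rational function, by strengthening slightly the computation already made in the proof of Lemma~\ref{lemA1}. Recall from there the representation $l_{\mu_{2}}(x)=l(\sqrt{\mu_{2}}\,x)$, where $l(y)=\tfrac1\gamma P\!\bigl(\tfrac{\alpha}{\gamma}g(y)\bigr)+\tfrac{\alpha^{2}}{\gamma}|a|\,y^{2}$ with $P(z)=1-z+z^{2}$, $g(y)=y\coth y$ and $\alpha$ as in that proof. In Lemma~\ref{lemA1} only the bound $P(z)\ge\tfrac34$ was used; here I would instead keep the quadratic growth by invoking the sharper elementary inequalities $P(z)\ge\tfrac12(1+z^{2})$ (which is just $(z-1)^{2}\ge0$) and $g(y)\ge y$ for $y\ge0$ (since $\coth y\ge1$ for $y>0$ and $g(0)=1$). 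Carrying these through the substitution $y=\sqrt{\mu_{2}}\,x$ used in the proof of Lemma~\ref{lemA1} yields
\[
l_{\mu_{2}}(x)\;\ge\;\frac{1}{2\gamma}\Bigl(1+\frac{\mu}{\gamma^{2}}x^{2}\Bigr),\qquad x\ge 0,
\]
which is the decisive improvement: the lower bound now grows quadratically in $x$, which is exactly what is needed to control the quartic denominator $j_{b}(x)^{2}=(1+\mu b x^{2})^{2}$ of $R_{\gamma}$.

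Next I would combine this with the explicit form $j_{c}(x)=1+\mu|c|x^{2}$ (note $|c|>0$, as $c<0$) and with $1-\gamma>0$, to get, after setting $t=\mu x^{2}\in[0,\infty)$,
\[
R_{\gamma}(x)\;\ge\;\frac{1-\gamma}{2\gamma}\,\rho(t),\qquad \rho(t):=\frac{(1+|c|t)\bigl(1+t/\gamma^{2}\bigr)}{(1+bt)^{2}}.
\]
It then remains to show $\inf_{t\ge0}\rho(t)>0$, which is straightforward: $\rho$ is continuous and strictly positive on $[0,\infty)$, it satisfies $\rho(0)=1$, and, comparing the leading coefficients of the two quadratics, $\rho(t)\to|c|/(\gamma^{2}b^{2})>0$ as $t\to\infty$. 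Choosing $T$ so that $\rho(t)>\tfrac12|c|/(\gamma^{2}b^{2})$ for $t>T$ and minimising $\rho$ over the compact interval $[0,T]$, one obtains $\rho_{0}:=\inf_{t\ge0}\rho(t)>0$, and then $m=m(\gamma):=\tfrac{(1-\gamma)\rho_{0}}{2\gamma}>0$ satisfies (\ref{appe6b}). Since $R_{\gamma}(k)=\varphi(k)^{2}$ by (\ref{BFD33}), this also makes the speed limit $c_{\gamma}$ strictly positive, as desired.

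The one point that requires care is this first step: the quadratic growth of the lower bound for $l_{\mu_{2}}$ must be extracted from the $P$‑term and not from the $|a|\,y^{2}$‑term, because $a$ is only assumed $\le0$ and may well vanish; this is precisely why $P(z)\ge\tfrac12(1+z^{2})$ is used in place of $P(z)\ge\tfrac34$. Everything after that — the reduction to the rational function $\rho$ and the compactness argument for $\inf\rho>0$ — is routine, and since only strict positivity of $m(\gamma)$ is asserted there is no need to keep track of sharp constants.
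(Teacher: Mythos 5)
Your proof is correct and follows essentially the same route as the paper's: both arguments reduce the claim to the fact that $R_{\gamma}$ is continuous, strictly positive, and bounded away from zero as $x\to\infty$, and then conclude by compactness on a bounded interval. The only difference is that the paper controls the tail by computing $\lim_{x\to\infty}R_{\gamma}(x)=\frac{(1-\gamma)}{\gamma}\frac{|c|}{b^{2}}\bigl(|a|+\frac{1}{\gamma^{2}}\bigr)>0$ directly, whereas you first sharpen Lemma \ref{lemA1} to the quadratic lower bound $l_{\mu_{2}}(x)\ge\frac{1}{2\gamma}\bigl(1+\frac{\mu}{\gamma^{2}}x^{2}\bigr)$ (correctly extracting the growth from the $P$-term rather than from the possibly vanishing $|a|$-term) and then minorize $R_{\gamma}$ by an explicit rational function --- a slightly more quantitative but equivalent way of handling the behavior at infinity.
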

\begin{proof}
We note that, by Lemma \ref{lemA1}, $R_{\gamma}$ is positive; furthermore
\begin{equation*}
\lim_{x\rightarrow+\infty}R_{\gamma}(x)=R_{m}:=\frac{(1-\gamma)}{\gamma}\frac{|c|}{b^{2}}\left(|a|+\frac{1}{\gamma^{2}}\right)>0.
\end{equation*}
Therefore, there is $M>0$ such that $R_{\gamma}(x)>R_{m}/2$ for $x>M$. Moreover, since $R_{\gamma}$ is continuous, there exists $x_{\gamma}\in [0,M]$ with
\begin{equation*}
R_{\gamma}(x_{\gamma})=\min_{0\leq x\leq M}R_{\gamma}(x).
\end{equation*}
Thus, (\ref{appe6b}) holds by taking $m\leq \min\{\frac{R_{m}}{2},R_{\gamma}(x_{\gamma})\}$.
\end{proof}
\begin{remark}
Note that the previous result is independent of the value of $\nu:=\sqrt{\mu/\mu_{2}}$. For modelling arguments, it is reasonable to assume that $\nu<1$. In such case, all the computations performed suggest that $m=R_{\gamma}(x_{\gamma})$. This is illustrated in Figure \ref{Fig_A1}, that represents (\ref{appe6}) for two values of $\nu<1$, for which $x_{\gamma}=0$ and when $x_{\gamma}>0$ respectively. When $\nu\geq 1$, one can find experimentally some cases for which $R_{\gamma}$ does not attain the minimum.
\begin{figure}[htbp]
\centering
\subfigure[]
{\includegraphics[width=6.2cm]{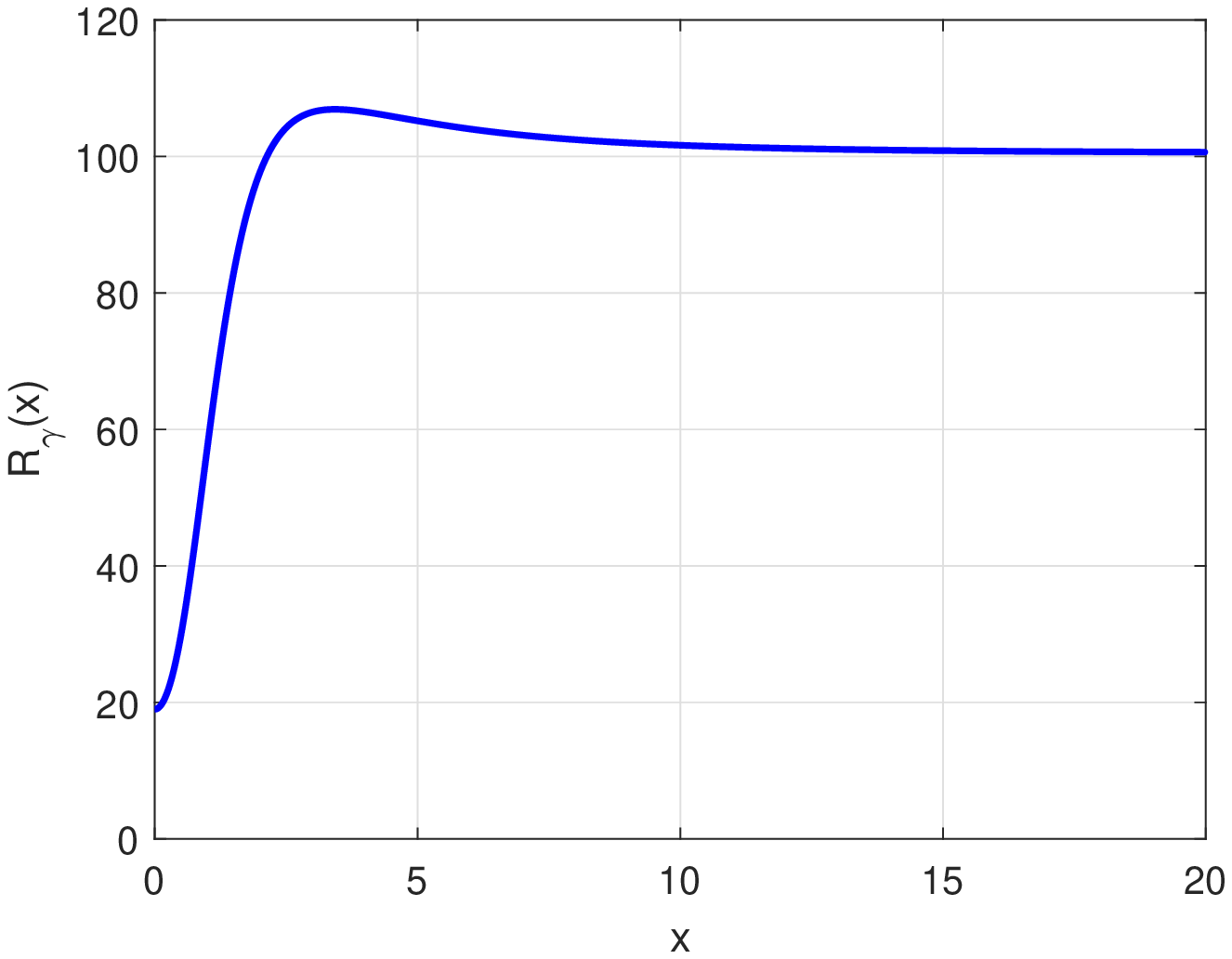}}
\subfigure[]
{\includegraphics[width=6.2cm]{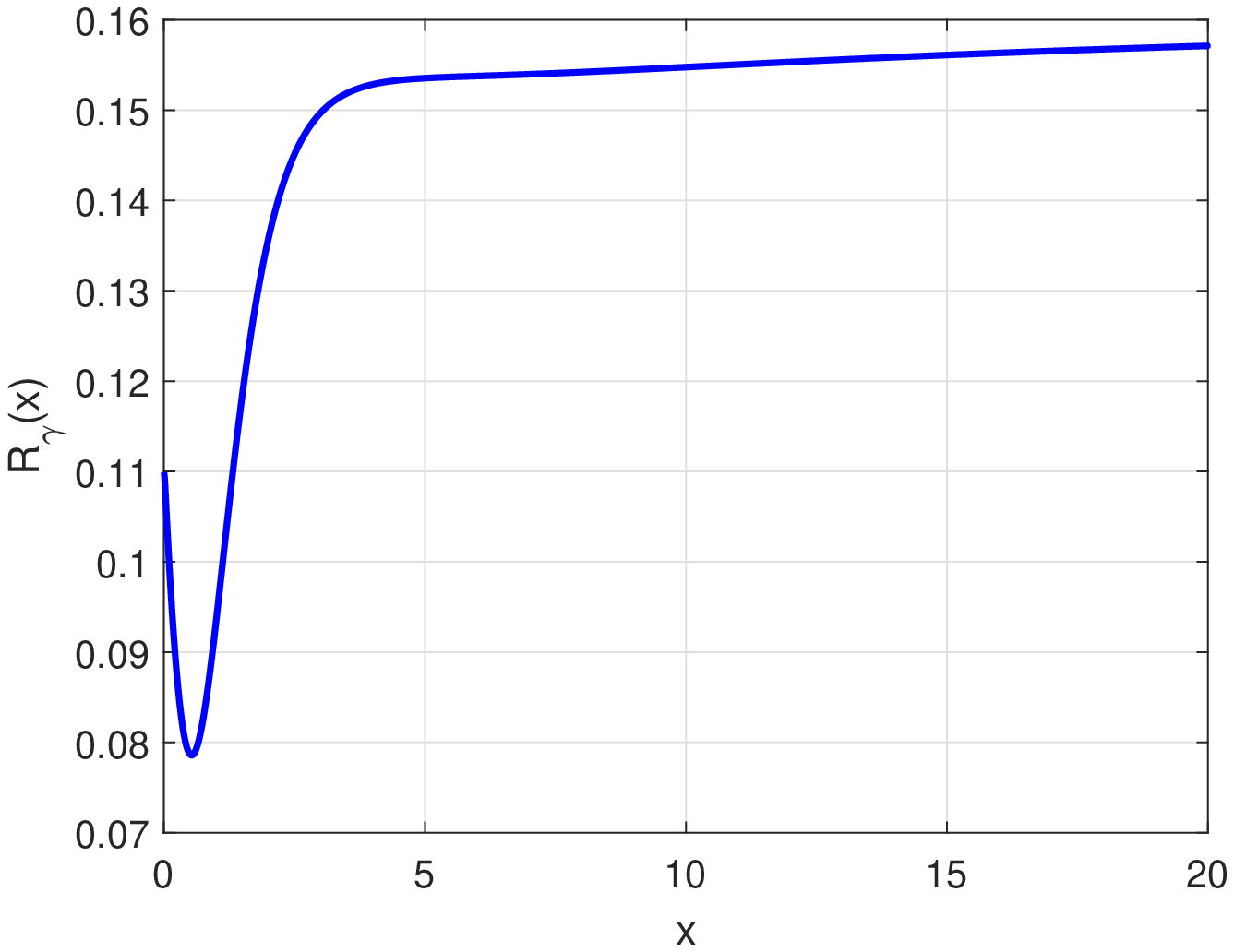}}
\caption{Function (\ref{appe6}) for the values $\epsilon_{d-b}=0$, $a,b,c,d$ given by (\ref{321}) and (a) $\gamma=0.2, \nu:=\sqrt{\mu/\mu_{2}}=0.5$; (b) $\gamma=0.9, \nu=0.01$.}
\label{Fig_A1}
\end{figure}
\end{remark}
\begin{lemma}
\label{lemA3} Let $\gamma\in (0,1), b=d>0, c<0,a\leq 0$. Let $m=m(\gamma)>0$ be defined in Lemma \ref{lemA2}. If
\begin{equation}\label{appe6c}
|c_{s}|<c_{\gamma}:=\sqrt{m(\gamma)}.
\end{equation}
then the operator $Q$ defined in (\ref{appe4}), (\ref{appe5}) is positive definite and defines a norm which is equivalent to the usual $H^{1}\times H^{1}$ norm.
\end{lemma}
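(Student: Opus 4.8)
The plan is to pass to the Fourier side and reduce the positive definiteness of $Q$ and the norm equivalence to a uniform (in $k$) spectral estimate for the $2\times2$ symbol matrices $\widehat{Q}(k)$ of (\ref{appe5}). By Plancherel's theorem, for $\psi=(\zeta,u)^{T}\in H^{1}\times H^{1}$,
\[
\langle Q\psi,\psi\rangle=\int_{\mathbb{R}}\big\langle \widehat{Q}(k)\widehat{\psi}(k),\widehat{\psi}(k)\big\rangle\,dk
=\int_{\mathbb{R}}\Big((1-\gamma)j_{c}(k)|\widehat{\zeta}(k)|^{2}+l_{\mu_{2}}(k)|\widehat{u}(k)|^{2}-2c_{s}j_{b}(k)\,\mathrm{Re}\big(\widehat{\zeta}(k)\overline{\widehat{u}(k)}\big)\Big)\,dk,
\]
while $\|\psi\|_{H^{1}\times H^{1}}^{2}=\int_{\mathbb{R}}(1+k^{2})|\widehat{\psi}(k)|^{2}\,dk$. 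Since $\widehat{Q}(k)$ is real, symmetric and even in $k$, it suffices to produce constants $0<\delta\le C$ with
\[
\delta(1+k^{2})I_{2}\ \le\ \widehat{Q}(k)\ \le\ C(1+k^{2})I_{2},\qquad k\ge 0,
\]
which is exactly the assertion that $\langle Q\cdot,\cdot\rangle^{1/2}$ is a norm equivalent to the $H^{1}\times H^{1}$ norm; positive definiteness of $Q$ is then immediate from the left inequality.

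First I would record the elementary scalar bounds: $j_{b}(k)=1+\mu bk^{2}$ and $j_{c}(k)=1+\mu|c|k^{2}$ are each comparable to $1+k^{2}$ (with $j_{b}(k)\ge\min(1,\mu b)(1+k^{2})$), and $l_{\mu_{2}}(k)$ is bounded below by $3/(4\gamma)$ by Lemma \ref{lemA1}, while inspection of its expression in (\ref{BFD2c}) gives $l_{\mu_{2}}(k)/k^{2}\to\frac{\mu}{\gamma}\big(|a|+\tfrac{1}{\gamma^{2}}\big)>0$ as $k\to\infty$; together with continuity and evenness this yields $c_{1}(1+k^{2})\le l_{\mu_{2}}(k)\le c_{2}(1+k^{2})$ for positive constants $c_{1},c_{2}$. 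The upper bound $\widehat{Q}(k)\le C(1+k^{2})I_{2}$ then follows directly from the displayed quadratic form, estimating the cross term by $2|c_{s}|j_{b}(k)|\widehat{\zeta}(k)||\widehat{u}(k)|\le |c_{s}|j_{b}(k)(|\widehat{\zeta}(k)|^{2}+|\widehat{u}(k)|^{2})$.

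The heart of the proof is the lower bound. Because $(1-\gamma)j_{c}(k)\ge 1-\gamma>0$, the symmetric matrix $\widehat{Q}(k)$ has strictly positive trace, so its two eigenvalues $\lambda_{\pm}(k)$ are nonnegative as soon as $\det\widehat{Q}(k)\ge 0$, and the smaller one is $\lambda_{-}(k)=\det\widehat{Q}(k)/\lambda_{+}(k)$ with $\lambda_{+}(k)\le \mathrm{tr}\,\widehat{Q}(k)\le C(1+k^{2})$. Using the definition (\ref{appe6}) of $R_{\gamma}$ and Lemma \ref{lemA2},
\[
\det\widehat{Q}(k)=(1-\gamma)j_{c}(k)\,l_{\mu_{2}}(k)-c_{s}^{2}j_{b}(k)^{2}=j_{b}(k)^{2}\big(R_{\gamma}(k)-c_{s}^{2}\big)\ \ge\ j_{b}(k)^{2}\big(m(\gamma)-c_{s}^{2}\big),
\]
and the hypothesis $|c_{s}|<c_{\gamma}=\sqrt{m(\gamma)}$ of (\ref{appe6c}) makes $m(\gamma)-c_{s}^{2}>0$. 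Combining with $j_{b}(k)\ge\min(1,\mu b)(1+k^{2})$ and the bound on $\lambda_{+}$ gives $\lambda_{-}(k)\ge\delta(1+k^{2})$ with $\delta=[\min(1,\mu b)]^{2}(m(\gamma)-c_{s}^{2})/C>0$, hence $\langle Q\psi,\psi\rangle\ge\delta\|\psi\|_{H^{1}\times H^{1}}^{2}$. Together with the upper bound this proves the norm equivalence and the positive definiteness of $Q$. The one delicate point — the rest being bookkeeping — is that Lemma \ref{lemA2} furnishes the \emph{quantitative} positive infimum $m(\gamma)$ of $R_{\gamma}$, and that this infimum is matched at infinity by the correct $O(k^{2})$ growth of $l_{\mu_{2}}$, so the determinant estimate has precisely the order $(1+k^{2})^{2}$ needed to dominate $\lambda_{+}(k)=O(1+k^{2})$ and yield a lower bound of the right order $1+k^{2}$.
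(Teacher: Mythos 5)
Your proof is correct and follows essentially the same route as the paper: both pass to the Fourier symbol $\widehat{Q}(k)$, use the determinant identity $\det\widehat{Q}(k)=j_{b}(k)^{2}\bigl(R_{\gamma}(k)-c_{s}^{2}\bigr)$ together with Lemma \ref{lemA2} and the hypothesis $|c_{s}|<\sqrt{m(\gamma)}$ to get positivity, and then sandwich the eigenvalues between multiples of $1+k^{2}$ to conclude via Plancherel. If anything, your lower bound $\lambda_{-}=\det\widehat{Q}/\lambda_{+}\ge j_{b}^{2}(m-c_{s}^{2})/\operatorname{tr}\widehat{Q}$ makes explicit a step the paper dispatches with ``after some computations.''
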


\begin{proof}
Note first that $Q$ is Hermitian from the representation (\ref{appe5}). For $k\in\mathbb{R}$, let
$\delta(k):=(1-\gamma)j_{c}(k)l_{\mu_{2}}(k)-c_{s}^{2}j_{b}(k)^{2}$ and
\begin{eqnarray}
\lambda_{\pm}(k)&=&\frac{1}{2}\left((1-\gamma)j_{c}(k)+l_{\mu_{2}}(k)\pm\sqrt{((1-\gamma)j_{c}(k)+l_{\mu_{2}}(k))^{2}-4\delta(k)}\right)\label{appe6d}\\
&=&\frac{1}{2}\left((1-\gamma)j_{c}(k)+l_{\mu_{2}}(k)\pm\sqrt{((1-\gamma)j_{c}(k)-l_{\mu_{2}}(k))^{2}+4c_{s}^{2}j_{b}(k)^{2}}\right),\nonumber
\end{eqnarray} 
be the (real) eigenvalues of the matrix (\ref{appe5}). Then
\begin{eqnarray}
\lambda_{-}(k)+\lambda_{+}(k)&=&(1-\gamma)j_{c}(k)+l_{\mu_{2}}(k),\label{appe7a}\\
\lambda_{-}(k)\lambda_{+}(k)&=&\delta(k).\label{appe7b}
\end{eqnarray}
Therefore, (\ref{appe6b})-(\ref{appe7b}) and lemmas \ref{lemA1} and \ref{lemA2} imply that $\lambda_{+}(k)>\lambda_{-}(k)>0$. The positive definite character of $Q$ follows from Parseval identity.

Now we prove the equivalence between the $H^{1}\times H^{1}$ norm and the norm defined by $Q$. To this end, we show the existence of positive constants $c_{j}, d_{j}, j=0,1$ such that
\begin{equation}\label{appe8}
c_{0}+c_{1}k^{2}\leq \lambda_{-}(k)<\lambda_{+}(k)\leq d_{0}+d_{1}k^{2},\quad k\in\mathbb{R}.
\end{equation}
From (\ref{appe7a}) we have
\begin{equation*}
\lambda_{+}(k)<(1-\gamma)j_{c}(k)+l_{\mu_{2}}(k),
\end{equation*}
and the existence of $d_{0}$ and $d_{1}$ is derived from the last inequality, the form of $j_{c}$ and $l_{\mu_{2}}$ given by (\ref{BFD2c}) and the inequality, cf. \cite{AnguloS2019}
\begin{equation}\label{appe9}
y{\rm coth}{y}\leq 1+y,\quad y>0.
\end{equation}
On the other hand, from (\ref{BFD2c}) and (\ref{appe9}) we can find positive constants $A,B$ with $l_{\mu_{2}}(k)\leq A+Bk^{2}$ for all $k\in\mathbb{R}$. Then, taking $c_{0}$ and $c_{1}$ such that
$$c_{0}>(1-\gamma)+A,\quad c_{1}>(1-\gamma)\mu |c|+B\mu_{2},$$ and after some computations, the first inequality of (\ref{appe8}) is satisfied and the lemma holds.
\end{proof}
From (\ref{appe4}) and Lemma \ref{lemA2} we can then derive the coercivity property for the functional $E_{\mu_{2}}$. One can check that the rest of the arguments used in \cite{AnguloS2019}, concerning the application of the Concentration-Compactness theory for the existence of solitary waves, as well as the proofs on the regularity and asymptotic decay, are also valid here. This leads to the following result.
\begin{theorem}
\label{theorA3}
Let $\gamma\in (0,1), \epsilon>0, b=d>0, c<0, a\leq 0$. We assume that (H1) holds and that $c_{s}$ satisfies (\ref{appe6c}). Then the system (\ref{BFD}) admits a smooth solitary wave solution $(\zeta,u)$ of speed $c_{s}$ that decays exponentially to zero at infinity.
\end{theorem}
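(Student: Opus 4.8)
The plan is to obtain the solitary wave as a minimizer of the constrained variational problem (\ref{appe1}) and then to bootstrap its smoothness and exponential decay, following the argument of \cite{AnguloS2019} almost verbatim; the only structural change is that the coercivity of the energy functional $E_{\mu_{2}}$ is now supplied by Lemma \ref{lemA3} under the single condition (\ref{appe6c}), in place of the hypotheses (\ref{cond1})--(\ref{cond2}).

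First I would record the consequences of (\ref{appe4}) and Lemma \ref{lemA3}: for $|c_{s}|<c_{\gamma}$ the quadratic form $2E_{\mu_{2}}=\langle Q(\zeta,u),(\zeta,u)\rangle$ is equivalent to the square of the $H^{1}\times H^{1}$ norm, which is precisely the coercivity used in Proposition 2.1(b) and Proposition 3.1(d) of \cite{AnguloS2019}. It follows that for every $\lambda>0$ the infimum $I_{\lambda}$ in (\ref{appe1}) is finite, that minimizing sequences are bounded in $H^{1}\times H^{1}$, and that the scaling $(\zeta,u)\mapsto t(\zeta,u)$, under which $E_{\mu_{2}}$ is homogeneous of degree two and $F$ of degree three, yields $I_{\lambda}=\lambda^{2/3}I_{1}$ and hence the strict subadditivity $I_{\lambda_{1}+\lambda_{2}}<I_{\lambda_{1}}+I_{\lambda_{2}}$ for all $\lambda_{1},\lambda_{2}>0$.

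Next I would run the concentration--compactness lemma of \cite{Lions} on a minimizing sequence $(\zeta_{n},u_{n})$ for $I_{\lambda}$. Vanishing is ruled out because, by the $H^{1}$ bound and Lions' one-dimensional lemma, a vanishing sequence tends to zero in $L^{3}\times L^{3}$, forcing $F(\zeta_{n},u_{n})\to 0$ and contradicting $F(\zeta_{n},u_{n})=\lambda>0$ (here $\epsilon>0$ enters); dichotomy is ruled out by the strict subadditivity above together with the near-additivity of $E_{\mu_{2}}$ and $F$ on profiles with well-separated supports. Hence, modulo translations, a subsequence converges strongly in $H^{1}\times H^{1}$ to a minimizer $(\zeta,u)$ with $F(\zeta,u)=\lambda$. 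Since $\nabla F$ cannot vanish at $(\zeta,u)$ (else $u\equiv 0$ and $F=0$), the Lagrange multiplier rule gives $Q(\zeta,u)^{T}=\theta\,(\tfrac{\epsilon}{2\gamma}u^{2},\tfrac{\epsilon}{\gamma}\zeta u)^{T}$ for a real $\theta$, with $\theta\neq 0$ since $Q$ is positive definite and $F(\zeta,u)\neq 0$; replacing $(\zeta,u)$ by $\theta^{-1}(\zeta,u)$ converts this identity into (\ref{BFD2b}), i.e.\ into the profile system of a solitary wave of speed $c_{s}$. Smoothness and exponential decay then follow exactly as in \cite{AnguloS2019}: writing (\ref{BFD2b}) as $(\zeta,u)=Q^{-1}N(\zeta,u)$ with $N$ quadratic, $\widehat{Q}(k)^{-1}$ is bounded and, because the entries of $\widehat{Q}(k)$ grow like $k^{2}$, each application of $Q^{-1}$ gains two derivatives on the quadratic right-hand side, so a standard bootstrap from $H^{1}\times H^{1}$ gives $(\zeta,u)\in H^{s}\times H^{s}$ for all $s$; exponential decay comes from the analyticity of $\widehat{Q}(k)^{-1}$ in a strip $|\operatorname{Im}k|<\delta$ (the off-axis singularities of $l_{\mu_{2}},j_{b},j_{c}$ --- the poles of $\coth$ and the zeros of $j_{b}$ --- lying a positive distance from $\mathbb{R}$ under the present sign conditions) via the usual Paley--Wiener argument on $e^{\delta'|X|}(\zeta,u)$.

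The hard part is not any single estimate but certifying that the concentration--compactness scheme of \cite{AnguloS2019} is genuinely unaffected when (\ref{cond1})--(\ref{cond2}) are weakened to $|c_{s}|<c_{\gamma}$: one must check that those hypotheses were used there solely to obtain the coercivity of $E_{\mu_{2}}$ --- which Lemmas \ref{lemA1}--\ref{lemA3} re-establish under the weaker assumption --- and that the exclusion of dichotomy and the regularity/decay bootstrap do not depend on the precise value of $c_{s}$ in the admissible range. Within this, the most delicate ingredients are the strict subadditivity of $\lambda\mapsto I_{\lambda}$ and the non-triviality of the minimizer guaranteed by hypothesis (H1), since together these are what promote mere boundedness of minimizing sequences to compactness modulo translations.
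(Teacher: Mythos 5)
Your proposal is correct and follows essentially the same route as the paper: the paper's proof consists precisely of invoking the coercivity of $E_{\mu_{2}}$ supplied by Lemmas \ref{lemA1}--\ref{lemA3} under the weakened condition (\ref{appe6c}) and then observing that the concentration--compactness, regularity, and decay arguments of \cite{AnguloS2019} go through unchanged. You simply spell out the steps of that machinery (subadditivity, exclusion of vanishing and dichotomy, Lagrange multiplier, bootstrap, Paley--Wiener) that the paper leaves implicit.
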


\end{appendices}

\end{document}